\documentclass[11pt]{amsart}

\usepackage{amscd, amsfonts, amsmath, amssymb, amsthm, enumerate, epsfig, flexisym, float, graphicx, hhline, hyperref, mathtools, pdfpages, pinlabel, subcaption, tabu, tikz, tikz-cd, url, wasysym}
\usepackage[autostyle]{csquotes}
\usepackage[all,cmtip]{xy}

\makeatletter
\@namedef{subjclassname@2020}{Mathematics Subject Classification \textup{2020}}
\makeatother

\setlength\oddsidemargin{0.8mm}
\setlength\evensidemargin{0.8mm}
\setlength\textheight{21.7cm}
\setlength\textwidth{16.5cm}

\newtheorem{theorem}{Theorem}[section]
\newtheorem{corollary}[theorem]{Corollary}
\newtheorem{lemma}[theorem]{Lemma}
\newtheorem{proposition}[theorem]{Proposition}

\theoremstyle{definition}

\newtheorem{remark}[theorem]{Remark}
\numberwithin{equation}{subsection}
\newtheorem*{ack}{Acknowledgement}

\newcommand{\Conj}{\operatorname{Conj}}
\newcommand{\Core}{\operatorname{Core}}
\newcommand{\Alex}{\operatorname{Alex}}

\newcommand{\rk}{\operatorname{rk}}

\newcommand{\Env}{\operatorname{Env}}

\newcommand{\Inn}{\operatorname{Inn}}

\newcommand{\Artin}{\operatorname{\mathcal{A}}}

\newcommand{\Z}{\operatorname{Z}}

\newcommand{\Aut}{\operatorname{Aut}}

\newcommand{\mcg}{\mathcal{M}}
\newcommand{\dq}{\mathcal{D}}

\usetikzlibrary{cd, arrows, calc, knots, patterns, positioning, shapes.callouts, shapes.geometric, shapes.symbols}
\tikzset{>=stealth', arrow/.style={->}}
%%%%%%%%%%%%%%%%%%%%%%%%%%%%%%%%%%%%%%%%%%%%%%%%%%%%%%%%%%%%%%%%%%%%%%%%%%%%%%%%%%%%%%%%%%%%%%%%%%%%%%%%%%%%%%%%%%%%%%%%%%%%%%%%%%%%%%

\begin{document}

\title{Dehn quandles of groups and orientable surfaces}
\author{Neeraj K. Dhanwani}
\author{Hitesh R. Raundal}
\author{Mahender Singh}

\address{Department of Mathematical Sciences, Indian Institute of Science Education and Research (IISER) Mohali, Sector 81, S. A. S. Nagar, P. O. Manauli, Punjab 140306, India.}
\email{neerajk.dhanwani@gmail.com}
\email{hiteshrndl@gmail.com}
\email{mahender@iisermohali.ac.in}

\subjclass[2020]{Primary 57K10, 57K20; Secondary 57K12}
\keywords{Artin group, Coxeter group, Dehn quandle, link group, mapping class group, orderable group, surface group}

\begin{abstract}
Unifying various constructions of quandles including Coxeter quandles, free quandles, knot quandles of prime knots and Dehn quandles of orientable surfaces, we introduce Dehn quandles of groups with respect to their subsets. It turns out that Dehn quandles are precisely the ones that embed naturally into their enveloping groups. We prove that the enveloping group of the Dehn quandle of a given group with respect to its generating set is a central extension of that group, and that enveloping groups of Dehn quandles of Artin groups and link groups with respect to their standard generating sets are the groups themselves.  We discuss orderability of Dehn quandles and prove that free involutory quandles are left orderable, whereas certain generalised Alexander quandles are bi-orderable.  Specialising to surfaces, we give generating sets for Dehn quandles of orientable surfaces with punctures and compute their automorphism groups. As applications, we recover a result of Niebrzydowski and Przytycki proving that the knot quandle of the trefoil knot is isomorphic to the Dehn quandle of the torus and also extend a result of Yetter on epimorphisms of Dehn quandles of orientable surfaces onto certain involutory homological quandles.
\end{abstract}

\maketitle

\section{Introduction}
Quandles are algebraic systems with a binary operation that encodes the three Reidemeister moves of planar diagrams of links in the 3-space. These objects have shown appearance in a wide spectrum of mathematics including knot theory \cite{Joyce1979, Joyce1982, Matveev1982}, group theory, mapping class groups \cite{Zablow1999, Zablow2003}, set-theoretic solutions of the quantum Yang-Baxter equation and Yetter-Drinfeld Modules \cite{Eisermann2005}, Riemannian symmetric spaces \cite{Loos1} and Hopf algebras \cite{Andruskiewitsch2003}, to name a few. Though quandles already appeared under different guises in the literature, their study gained true momentum after the fundamental works of Joyce \cite{Joyce1982} and Matveev \cite{Matveev1982}, who showed that link quandles are complete invariants of non-split links up to orientation of the ambient space. Although quandles are strong invariants of links, the isomorphism problem for them is hard. This has motivated search for newer properties, constructions and invariants of quandles themselves.
\par

Groups are natural sources of quandles. Unifying various known constructions of quandles including Coxeter quandles, free quandles and Dehn quandles of orientable surfaces, we introduce Dehn quandles of groups with respect to their subsets. Recall that a free quandle on a set is simply a union of conjugacy classes of generators in the free group on that set equipped with the quandle operation of conjugation. Let $\mathcal{M}_g$ be the mapping class group of a closed orientable surface $S_g$ and $\mathcal{D}_g$ the set of isotopy classes of simple closed curves in $S_g$. It is well-known that $\mathcal{M}_g$ is generated by Dehn twists along finitely many elements from $\mathcal{D}_g$ \cite[Theorem 4.1]{Farb-Margalit2012}. The binary operation
$\alpha * \beta= T_\beta(\alpha),$ where $\alpha, \beta \in \mathcal{D}_g$ and $T_\beta$ is the Dehn twist along $\beta$, turns $\mathcal{D}_g$ into a quandle called the Dehn quandle of the surface $S_g$. Identifying the isotopy class of a simple closed curve with the corresponding Dehn twist, it turns out that the quandle $\mathcal{D}_g$ is a subquandle of the conjugation quandle $\Conj(\mathcal{M}_g)$ of the mapping class group. These quandles originally appeared in the work of Zablow \cite{Zablow1999, Zablow2003}. Further, \cite{kamadamatsumoto,Yetter2002, Yetter2003} considered a quandle structure on the set of isotopy classes of simple closed arcs in orientable surfaces with at least two punctures, and called it quandle of cords. In the case of a disk with $n+1$ punctures, we will see that this quandle is simply the Dehn quandle of the braid group $B_{n+1}$ with respect to its standard set of generators. A presentation for the quandle of cords of the plane and the 2-sphere has been given in \cite{kamadamatsumoto}. More generally, in the follow-up \cite{DhanwaniRaundalSingh2022} of this work, we have given two approaches to write explicit presentations for the class of Dehn quandles.
\par

Zablow derived some fundamental relations in Dehn quandles \cite{Zablow2008} and subsequently used them to develop a homology theory based on these quandles. It is shown that isomorphism classes of Lefschetz fibrations over a disk correspond to quandle homology classes in dimension two. In \cite{ChamanaraHuZablow}, the Dehn quandle structure of the torus has been extended to a quandle structure on the set of its measured geodesic foliations and the quandle homology of this extended quandle has been studied. Niebrzydowski and Przytycki \cite{NiebrzydowskiPrzytycki2009} proved that the Dehn quandle of the torus is isomorphic to the fundamental quandle of the trefoil knot. It was suspected that there may be close connections between knot quandles and Dehn quandles. However, the trefoil knot turned out to be somewhat special and the result does not extend in a large number of cases. For instance, it has been proved in \cite{Zablow2014} that if $K$ is a torus knot of type $(2,p)$, where $p$ is odd and $3\nmid p$, then $K$ does not admit any non-trivial coloring by elements of a Dehn quandle $\mathcal{D}_g$ for any orientable surface $S_g$ of genus $g \ge 1$. At the same time, all torus knots of type $(3,n)$, where $n$ is even, admit non-trivial Dehn quandle colorings.
\par

In this paper, we introduce Dehn quandles of groups with respect to their subsets. These quandles include many well-known constructions of quandles from groups including the conjugation quandle of a group, the free quandle on a set, Coxeter quandles, Dehn quandles of closed orientable surfaces, quandle of cords of orientable surfaces, knot quandles of prime knots, core quandles of groups and generalised Alexander quandles of groups with respect to fixed-point free automorphisms, to name a few. Our construction places many well-known results in the subject into a unified perspective.
\par

The paper is broadly divided into two parts. The first part develops the general theory of Dehn quandles of groups and the second part focuses on Dehn quandles of orientable surfaces. The precise organisation of the paper is as follows. In Section \ref{section-prelim}, we recall some basic terminology and set notations.  We begin Section \ref{section-dehn-quandles} by making some basic observations on Dehn quandles of groups. We prove that the enveloping group of the Dehn quandle of a given group with respect to its generating set is a central extension of that group (Theorem \ref{central-extension}). As a characterisation, we prove that a quandle is a Dehn quandle of a group with respect to a generating set if and only if the natural map from the quandle to its enveloping group is injective (Proposition \ref{injectivity-dehn-eta}). Consequently, it follows that a knot is prime if and only if its knot quandle is the Dehn quandle of its knot group with respect to a single Wirtinger generator (Corollary \ref{knot-prime-dehn-quandle}). We prove that the enveloping group of the Dehn quandle of a group with respect to the generating set of a presentation that has only conjugation relations is the group itself (Theorem \ref{ker-phi-conjugacy-relation-trivial}). In Section \ref{section orderable dehn quandles}, we discuss orderability of Dehn quandles (Proposition \ref{dehn-quandle-not-ordererable}) and deduce that spherical Artin groups are not bi-orderable (Corollary \ref{artin not bi-order}). We also prove that free involutory quandles are left orderable (Proposition \ref{free invol left orderable}) and that certain generalised Alexander quandles are bi-orderable (Proposition \ref{biordered_alexander_quandle}). Section \ref{section Dehn quandles of surfaces} deals with Dehn quandles of orientable surfaces possibly with punctures, and determine generating sets for these quandles (Proposition \ref{cor:gen_Dehnquandle-with-punctures}). As an application of our construction, we recover a result of Niebrzydowski and Przytycki that the knot quandle of the trefoil knot is isomorphic to the Dehn quandle of the torus (Theorem \ref {Niebrzydowski Przytycki theorem}). In Section \ref{section auto dehn quandles surfaces}, we compute automorphism groups of Dehn quandles of orientable surfaces (Theorem \ref{Aut-Dehn-quandle-puncture}). In Section \ref{sec-homological-quandles}, we prove that there exist surjective quandle homomorphisms from Dehn quandles of surfaces of positive genus onto certain finite homological quandles arising from the algebraic intersection number (Theorem \ref{thm:homologicalquandle}). As a consequence, we generalise a similar result of Yetter \cite{Yetter2003} who considered the genus two case. Finally, we also prove that these homological quandles are simply Dehn quandles of corresponding symplectic groups (Proposition \ref{homological-are-dehn-of-symplectic}).
\medskip

\section{Preliminaries}\label{section-prelim}
We begin by recalling some basic definitions.
\par
A {\it quandle} is a non-empty set $Q$ together with a binary operation $*$ satisfying the following axioms:
\begin{enumerate}[(i)]
\item $x*x=x$\, for all $x\in Q$.
\item For each $x, y \in Q$, there exists a unique $z \in Q$ such that $x=z*y$.
\item $(x*y)*z=(x*z)*(y*z)$\, for all $x,y,z\in Q$.
\end{enumerate}
\par

The second axiom is equivalent to the bijectivity of the right multiplication by each element of $Q$. This gives a dual binary operation $*^{-1}$ on $Q$ defined as $x*^{-1}y=z$ if $x=z*y$. Thus, the second axiom is equivalent to saying that
\begin{equation*}
(x*y)*^{-1}y=x\qquad\textrm{and}\qquad\left(x*^{-1}y\right)*y=x
\end{equation*}
for all $x,y\in Q$, and hence it allows cancellations from right.
\par

Topologically, the three quandle axioms correspond to the three Reidemeister moves of planar diagrams of links in the 3-space. Following are some basic examples of quandles, some of which we shall use in the forthcoming sections.
\begin{itemize}
\item If $G$ is a group, then the binary operation $x*y=y x y^{-1}$ turns $G$ into the quandle $\Conj(G)$ called the \textit{conjugation quandle} of $G$.
\item A group $G$ with the binary operation $x*y=yx^{-1}y$ turns $G$ into the quandle $\Core(G)$ called the \textit{core quandle} of $G$. In particular, if $G$ is a cyclic group of order $n$, then $\Core(G$) is the \textit{dihedral quandle} of order $n$.
\item If $G$ is a group and $\phi\in \Aut(G)$, then $G$ with the binary operation $x*y=\phi\left(xy^{-1}\right)y$ forms a quandle $\Alex(G,\phi)$ referred as the \textit{generalized Alexander quandle} of $G$ with respect to $\phi$.
\item Every link can be assigned a quandle called the link quandle which is a complete invariant of non-split links up to weak equivalence. This fundamental result appeared independently in the works of Joyce \cite{Joyce1979, Joyce1982} and Matveev \cite{Matveev1982}.
\end{itemize}

Morphisms and automorphisms of quandles are defined in the obvious way. We denote the group of all automorphisms of a quandle $Q$ by $\Aut(Q)$. Note that the quandle axioms are equivalent to saying that for each $y\in Q$, the map $S_y:Q\to Q$ given by $S_y(x)=x* y$ is an automorphism of $Q$ fixing $y$. The group $\Inn(Q)$ generated by such automorphisms is called the group of inner automorphisms of $Q$. The group $\Inn(Q)$ acts on the quandle $Q$ and the corresponding orbits are referred as connected components of the quandle. Thus, a {\it connected} quandle is one that has only one orbit, that is, $\Inn(Q)$ acts transitively on $Q$. For example, a dihedral quandle of odd order is connected whereas that of even order is not. Further, $Q$ is called {\it involutory} if $x*^{-1}y=x* y$ for all $x,y\in Q$. For example, the core quandle of any group is involutory.
\medskip

The \textit{enveloping group} $\Env(Q)$ of a quandle $Q$ is the group with the set of generators as $\{e_x \mid x \in Q\}$ and the defining relations as
\begin{equation*}
e_{x*y}=e_y e_x e_y^{-1}
\end{equation*}
for all $x,y\in Q$. For example, if $Q$ is a trivial quandle, then $\Env(Q)$ is the free abelian group of rank equal to the cardinality of $Q$. The enveloping group of the link quandle $Q(L)$ of a link $L$ is the link group $G(L)$ of $L$ \cite{Joyce1979, Joyce1982}. The natural map
\begin{equation*}
\eta: Q \to \Env(Q)
\end{equation*}
given by $\eta(x)=e_x$ is a quandle homomorphism with $\Env(Q)$ viewed as the conjugation quandle. The map $\eta$ is not injective in general. The functor from the category of quandles to that of groups assigning the enveloping group to a quandle is left adjoint to the functor from the category of groups to that of quandles assigning the conjugation quandle to a group. Thus, enveloping groups play a crucial role in understanding of quandles themselves. 
\par

We conclude this section by setting some notation. By \cite[Lemma 4.4.7]{Winker1984}, any element in a quandle can be written in a left-associated product of the form
\begin{equation*}
\left(\left(\cdots\left(\left(a_0*^{\epsilon_1}a_1\right)*^{\epsilon_2}a_2\right)*^{\epsilon_3}\cdots\right)*^{\epsilon_{n-1}}a_{n-1}\right)*^{\epsilon_n}a_n,
\end{equation*}
which, for simplicity, we write as
\begin{equation*}
a_0*^{\epsilon_1}a_1*^{\epsilon_2}\cdots*^{\epsilon_n}a_n.
\end{equation*}
For elements $g, h$ of a group $G$, we denote the commutator $g hg^{-1}h^{-1}$ by $[ g, h ]$ and the element $h g h^{-1}$ by $g^h$.
\medskip

\section{Dehn quandles of groups}\label{section-dehn-quandles}
In this main section, we introduce and develop the theory of Dehn quandles of groups.

\subsection{Generators and closure properties of Dehn quandles of groups} 

Let $G$ be a group, $A$ a non-empty subset of $G$ and $A^G$ the set of all conjugates of elements of $A$ in $G$. The {\it Dehn quandle} $\mathcal{D}(A^G)$ of $G$ with respect to $A$ is defined as the set $A^G$ equipped with the binary operation of conjugation, that is, $$x*y=yxy^{-1}$$ for all $x, y \in \mathcal{D}(A^G)$.
\par

Clearly, $\mathcal{D}(A^G)$ is a subquandle of $\Conj(G)$ for each subset $A$ of $G$. We will see later that every subquandle of $\Conj(G)$ is a Dehn quandle, and that Dehn quandles of groups with respect to a set of generators are of particular interest. The terminology is justified since Dehn quandles are simply unions of conjugacy classes of elements in groups, and Dehn was the first to highlight the study of conjugacy classes in groups.
\par

Dehn quandles of groups generalise many well-known constructions of quandles from groups. First, notice that $\mathcal{D}(G^G)$ is the conjugation quandle $\Conj(G)$. If $F(S)$ is the free group generated by $S$, then $\mathcal{D}(S^{F(S)})$ is the free quandle on $S$ \cite{Kamada2012, Kamada2017, NosakaBook}. If $\mathcal{W}$ is a Coxeter group with Coxeter generating set $S$, then $\mathcal{D}(S^{\mathcal{W}})$ is the so called Coxeter quandle \cite{Nosaka2017, TAkita}. Notice that Coxeter quandles turn out to be involutory. Let $S_{g}$ be a closed orientable surface of genus $g$ and $\mathcal{M}_{g}$ its mapping class group. If $S$ is the set of Dehn twists about essential simple closed curves, then $\mathcal{D}(S^{\mathcal{M}_{g}})$ is the Dehn quandle of the surface \cite{Zablow1999, Zablow2003, kamadamatsumoto}. We will discuss Dehn quandles of surfaces in detail in the upcoming sections. 

\begin{remark}
We note that \cite{BardakovNasybullov2020} gives a similar but different construction of quandles from groups. By \cite[Proposition 4.3]{BardakovNasybullov2020}, the construction in \cite{BardakovNasybullov2020} agrees with the Dehn quandle $\mathcal{D}(A^G)$ if and only if elements of $A$ are pairwise non-conjugate in $G$. Thus, for a fairly large class of groups including knot groups, mapping class groups, Artin groups and Coxeter groups, our construction differs from the one in \cite{BardakovNasybullov2020}.
\end{remark}

We begin with the following basic and useful observation. 

\begin{proposition}\label{generators-dehn-quandle-general}
If $G$ is a group generated by $S$, then $\mathcal{D}(S^G)$ is generated as a quandle by $S$.
\end{proposition}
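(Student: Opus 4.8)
The plan is to show directly that every element of $\mathcal{D}(S^G) = S^G$ can be reconstructed from elements of $S$ using only the quandle operations $*$ and $*^{-1}$, so that the subquandle generated by $S$ exhausts $S^G$. The reverse containment is immediate, since $S \subseteq \mathcal{D}(S^G)$ and $\mathcal{D}(S^G)$ is closed under both quandle operations; hence the real content is the forward inclusion.

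First I would record the basic dictionary between group conjugation and the quandle operations. For $x \in \mathcal{D}(S^G)$ and $s \in S$, the definition $x * y = y x y^{-1}$ gives $x * s = s x s^{-1}$ and $x *^{-1} s = s^{-1} x s$. Thus conjugation of $x$ by $s^{\pm 1}$ is realised by a single application of $*$ or $*^{-1}$ against the generator $s$.

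Next, I would take an arbitrary element of $S^G$, say $g s g^{-1}$ with $g \in G$ and $s \in S$. Since $S$ generates $G$, write $g = s_1^{\epsilon_1} s_2^{\epsilon_2} \cdots s_k^{\epsilon_k}$ with each $s_i \in S$ and $\epsilon_i \in \{1,-1\}$. Conjugating $s$ successively from the innermost generator outward, the dictionary above yields
$$g s g^{-1} = s *^{\epsilon_k} s_k *^{\epsilon_{k-1}} s_{k-1} *^{\epsilon_{k-2}} \cdots *^{\epsilon_1} s_1,$$
expressed as a left-associated product in the notation of Section \ref{section-prelim}. The right-hand side is manifestly a quandle word in elements of $S$, so $g s g^{-1}$ lies in the subquandle generated by $S$. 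As $g s g^{-1}$ was arbitrary, $S$ generates $\mathcal{D}(S^G)$.

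The argument is essentially a bookkeeping exercise, so I do not expect a serious obstacle; the only point demanding care is matching the exponents $\epsilon_i$ and the reversed order of the generators $s_i$ when unwinding the nested conjugation $s_1^{\epsilon_1}\cdots s_k^{\epsilon_k}\, s\, s_k^{-\epsilon_k}\cdots s_1^{-\epsilon_1}$ into the left-associated quandle word, which one verifies cleanly by a short induction on the length $k$ of the chosen word for $g$.
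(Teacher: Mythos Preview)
Your proof is correct and follows essentially the same approach as the paper: take an arbitrary element $gsg^{-1}$ of $S^G$, express $g$ as a word in $S$, and unwind the nested conjugation into the left-associated quandle word $s *^{\epsilon_k} s_k *^{\epsilon_{k-1}} \cdots *^{\epsilon_1} s_1$. The paper's proof is slightly terser, omitting your explicit dictionary and the remark on the reverse containment, but the argument is the same.
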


\begin{proof}
If $x \in \mathcal{D}(S^G)$, then $x=gsg^{-1}$ for some $s \in S$ and $g \in G$. Since $S$ generates $G$ as a group, we have $g=s_1^{\epsilon_1}s_2^{\epsilon_2} \cdots s_k^{\epsilon_k}$ for some $s_i \in S$ and $\epsilon_i \in \{1, -1\}$. Thus, we can write $x=s_1^{\epsilon_1}s_2^{\epsilon_2} \cdots s_k^{\epsilon_k}s\, s_k^{-\epsilon_k}\cdots s_2^{-\epsilon_2} s_1^{-\epsilon_1} = s *^{\epsilon_k} s_k *^{\epsilon_{k-1}} s_{k-1} *^{\epsilon_{k-2}} \cdots *^{\epsilon_1} s_1.$
\end{proof}

We now observe some closure properties of Dehn quandles.

\begin{proposition}\label{prod_Dehn_quandle}
Let $A$ and $B$ be subsets of groups $G$ and $H$, respectively. Then $\dq(A^G) \times \dq(B^H) \cong \dq( (A \times B)^{G\times H})$.
\end{proposition}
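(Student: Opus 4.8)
The plan is to exhibit an explicit isomorphism and to reduce everything to the elementary fact that conjugation in a direct product of groups is performed componentwise. The candidate map will simply be the identity on the common underlying Cartesian product, so almost all of the work goes into identifying the two underlying sets and then checking that the two quandle operations coincide.

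First I would pin down the underlying sets. A typical element of $(A \times B)^{G \times H}$ has the form $(g,h)(a,b)(g,h)^{-1}$ for some $a \in A$, $b \in B$, $g \in G$, $h \in H$, and since $(g,h)(a,b)(g,h)^{-1} = (gag^{-1}, hbh^{-1})$, this set is exactly $A^G \times B^H$ viewed inside $G \times H$. Hence the underlying set of $\dq\big((A \times B)^{G\times H}\big)$ is precisely the Cartesian product of the underlying sets of $\dq(A^G)$ and $\dq(B^H)$. I would then define $\phi \colon \dq(A^G) \times \dq(B^H) \to \dq\big((A \times B)^{G \times H}\big)$ by $\phi(x,y) = (x,y)$, which by the previous sentence is a well-defined bijection.

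It remains to verify that $\phi$ respects the quandle operations. On the product quandle the operation is componentwise, $(x_1,y_1) * (x_2,y_2) = (x_1 * x_2,\, y_1 * y_2)$, while on $\dq\big((A \times B)^{G\times H}\big)$ the operation is conjugation in $G \times H$, which again decomposes as $(x_2,y_2)(x_1,y_1)(x_2,y_2)^{-1} = (x_2 x_1 x_2^{-1},\, y_2 y_1 y_2^{-1}) = (x_1 * x_2,\, y_1 * y_2)$. Since the two expressions agree, $\phi$ is a quandle isomorphism. The argument has no genuine obstacle; the only point requiring any care is the set-theoretic equality $(A \times B)^{G \times H} = A^G \times B^H$, and this follows at once from the componentwise description of conjugation in a direct product.
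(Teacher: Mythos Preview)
Your proof is correct and follows essentially the same approach as the paper, which simply notes that conjugacy classes in a direct product of groups correspond to products of conjugacy classes. You have spelled out in detail what the paper's one-line proof leaves implicit, namely the set-theoretic identification $(A \times B)^{G \times H} = A^G \times B^H$ via componentwise conjugation and the verification that the two quandle operations agree.
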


\begin{proof}
The proof follows from the fact that conjugacy classes in direct products of two groups correspond to products of conjugacy classes.
\end{proof}
 
Recall that the disjoint union of two quandles can be viewed as a quandle where elements of one quandle acts trivially on elements of the other. The following observation is a direct consequence of definitions.

\begin{proposition}\label{prop:sum_Dehn_quandle} 
Let $A$ and $B$ be subsets of groups $G$ and $H$, respectively. Then $\dq(A^G)\sqcup \dq(B^H) \cong \dq((A\sqcup B)^{G\times H})$, where $A$ and $B$ are viewed as subsets of $G \times H$ via natural inclusions.
\end{proposition}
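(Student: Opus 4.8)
The plan is to exhibit an explicit bijection of the underlying sets and then verify that it respects the quandle operations, the whole argument reducing to a computation of conjugacy classes in a direct product. Under the natural inclusions, an element $a\in A$ becomes $(a,1)\in G\times H$ and an element $b\in B$ becomes $(1,b)\in G\times H$. Conjugating in the direct product gives $(g,h)(a,1)(g,h)^{-1}=(gag^{-1},1)$ and $(g,h)(1,b)(g,h)^{-1}=(1,hbh^{-1})$, so the conjugacy class of $(a,1)$ is $a^G\times\{1\}$ and that of $(1,b)$ is $\{1\}\times b^H$. Hence, as a set, $(A\sqcup B)^{G\times H}=\big(A^G\times\{1\}\big)\cup\big(\{1\}\times B^H\big)$, and this union is disjoint (the two pieces meet only in $(1,1)$, which occurs precisely when $1$ lies in both $A$ and $B$, a case excluded by the disjoint union convention).

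Next I would define $\psi\colon\dq(A^G)\sqcup\dq(B^H)\to\dq\big((A\sqcup B)^{G\times H}\big)$ by $\psi(x)=(x,1)$ for $x\in A^G$ and $\psi(y)=(1,y)$ for $y\in B^H$; the previous paragraph shows $\psi$ is a bijection. To see it is a quandle homomorphism I would split into cases according to the factors from which the two arguments come. When both arguments lie in $A^G$ (respectively $B^H$), the operation is conjugation within the first (respectively second) coordinate, so $\psi$ intertwines it with the operation of $\dq(A^G)$ (respectively $\dq(B^H)$). The key point is that mixed products are trivial: $(x,1)*(1,y)=(1,y)(x,1)(1,y)^{-1}=(x,1)$ and $(1,y)*(x,1)=(x,1)(1,y)(x,1)^{-1}=(1,y)$, which is exactly the trivial cross-action built into the disjoint union of quandles.

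Since these cases exhaust the quandle operation on both sides, $\psi$ is an isomorphism of quandles. There is no substantive obstacle here: the one thing to watch is that conjugation in $G\times H$ never mixes the two factors, and it is precisely this fact that makes the cross-action trivial and forces the image to be a disjoint union rather than a connected quandle. I expect the verification of the mixed cases, together with confirming the disjointness of the two conjugacy classes, to be the only steps requiring any attention.
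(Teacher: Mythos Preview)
Your argument is correct and is exactly the unpacking of what the paper leaves as ``a direct consequence of definitions'': you compute conjugacy classes in the direct product, obtain the set-theoretic bijection, and verify compatibility with the quandle operations, including the triviality of the mixed products. The only delicate point you flag---the potential overlap at $(1,1)$ when $1$ lies in both $A^G$ and $B^H$---is indeed a genuine edge case not addressed by the paper either; your remark that it is ``excluded by the disjoint union convention'' is adequate at this level of generality, though strictly speaking the isomorphism requires $1\notin A^G\cap B^H$ (equivalently, not both $1\in A$ and $1\in B$).
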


Given two quandle $Q_1=\langle S_1\mid R_1\rangle$ and $Q_2= \langle S_2\mid R_2\rangle$, we can define their free product as $$Q_1 \star Q_2=\langle S_1\sqcup S_2 \mid R_1\sqcup R_2\rangle.$$
 
\begin{proposition}\label{prop:free_prod_Dehn_quandle} 
Let $A$ and $B$ be subsets of groups $G$ and $H$, respectively. Then $\dq(A^G) \star \dq(B^H)$ is a Dehn quandle.
\end{proposition}

\begin{proof}
The proof follows from \cite[Lemma 7.1]{BardakovNasybullov2020} and Proposition \ref{injectivity-dehn-eta}.
\end{proof}
 
It is routine to check that propositions  \ref{prod_Dehn_quandle}, \ref{prop:sum_Dehn_quandle} and  \ref{prop:free_prod_Dehn_quandle} hold for arbitrary direct products, unions and free products, respectively.

\subsection{Enveloping groups of Dehn quandles of groups}
For a group $G$ and a subset $A$ of $G$, let $c(A^G)$ denote the number of connected components of $\mathcal{D}(A^G)$. Note that, $c(A^G)$ equals the number of conjugacy classes of elements of $G$ represented by elements of $A$.

\begin{theorem}\label{gen-group-dehn-quandle}
Let $G$ be a group generated by $S$. Then the following hold:
\begin{enumerate}[(i)]
\item $\Env(\mathcal{D}(S^G))$ is generated by $\{e_s \mid s\in S \}$.
\item $\Env(\mathcal{D}(S^G))_{ab}\cong \mathbb{Z}^{c(S^G)}$.
\end{enumerate}
\end{theorem}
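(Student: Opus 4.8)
The plan is to prove the two parts separately. For (i), I would first extract two consequences of the defining relation $e_{x*y}=e_y e_x e_y^{-1}$ of the enveloping group. The relation itself shows that $e_{x*y}$ is the conjugate $e_y e_x e_y^{-1}$; and rewriting the dual operation---if $z=x*^{-1}y$ then $x=z*y$, whence $e_x=e_y e_z e_y^{-1}$---yields $e_{x*^{-1}y}=e_y^{-1}e_x e_y$. Consequently, for every $y$ both $e_{x*y}$ and $e_{x*^{-1}y}$ lie in the subgroup generated by $e_x$ and $e_y$.

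Next I would invoke Proposition \ref{generators-dehn-quandle-general}, whose proof writes each $x\in\mathcal{D}(S^G)$ as a left-associated product $s*^{\epsilon_k}s_k*^{\epsilon_{k-1}}\cdots*^{\epsilon_1}s_1$ with $s,s_i\in S$. Substituting this into the two identities above and inducting on the length $k$ shows that $e_x$ is a word in $\{e_t\mid t\in S\}$ for every $x$. Since the $e_x$ generate $\Env(\mathcal{D}(S^G))$ by definition, the set $\{e_s\mid s\in S\}$ already generates the whole enveloping group, which is (i).

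For (ii), I would pass to the abelianization, where the conjugation relator collapses: $e_{x*y}=e_y e_x e_y^{-1}$ becomes $e_{x*y}=e_x$. Thus $\Env(\mathcal{D}(S^G))_{ab}$ is the free abelian group on the underlying set of $\mathcal{D}(S^G)$ modulo the identifications $e_x=e_{x*y}$ for all $x,y$. Because the maps $S_y\colon x\mapsto x*y$ generate $\Inn(\mathcal{D}(S^G))$, the equivalence relation generated by $x\sim x*y$ is precisely the partition of $\mathcal{D}(S^G)$ into $\Inn$-orbits, that is, into connected components. As each relator merely identifies one free generator with another and so introduces no torsion, the quotient is free abelian on the set of connected components, whose cardinality is $c(S^G)$ by definition; hence $\Env(\mathcal{D}(S^G))_{ab}\cong\mathbb{Z}^{c(S^G)}$.

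Both parts are essentially routine once the dual identity $e_{x*^{-1}y}=e_y^{-1}e_x e_y$ is available. The one place calling for care is the final step of (ii): one must check that abelianization leaves no relations beyond the generator identifications, so that the quotient is genuinely \emph{free} abelian (not merely finitely generated) on the components, and that the relation $x\sim x*y$ generates exactly the connected-component partition rather than something coarser or finer. I expect this bookkeeping, rather than any substantial idea, to be the main thing to get right.
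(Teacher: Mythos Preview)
Your proposal is correct and follows essentially the same route as the paper. For (i) the paper does precisely the conjugation computation you sketch, writing $e_x=e_{s_1}^{\epsilon_1}\cdots e_{s_k}^{\epsilon_k}e_{s_0}e_{s_k}^{-\epsilon_k}\cdots e_{s_1}^{-\epsilon_1}$ directly rather than by induction; for (ii) the paper phrases the argument in terms of the generators $[e_s]$ for $s\in S$ and conjugacy in $G$, whereas you work with all $e_x$ and $\Inn$-orbits, but since connected components of $\mathcal{D}(S^G)$ are exactly the conjugacy classes meeting $S$, the two formulations are the same.
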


\begin{proof}
Any element $x \in S^G$ can be written in the form $x=s_1^{\epsilon_1}s_2^{\epsilon_2} \cdots s_k^{\epsilon_k} s_0 s_k^{-\epsilon_k}\cdots s_2^{-\epsilon_2} s_1^{-\epsilon_1}$ for some $s_i \in S$ and $\epsilon_i \in \{1, -1\}$. Then, we see that
\begin{align*}
e_x &= e_{s_1^{\epsilon_1}s_2^{\epsilon_2} \cdots s_k^{\epsilon_k} s_0 s_k^{-\epsilon_k}\cdots s_2^{-\epsilon_2} s_1^{-\epsilon_1}}\\
&= e_{s_0 *^{\epsilon_k} s_k *^{\epsilon_{k-1}} s_{k-1}*^{\epsilon_{k-2}}\cdots *^{\epsilon_1} s_1}\\ 
&= e_{s_1}^{\epsilon_1} e_{s_2}^{\epsilon_2} \cdots e_{s_k}^{\epsilon_k} e_{s_0} e_{s_k}^{-\epsilon_k} \cdots e_{s_2}^{-\epsilon_2} e_{s_1}^{-\epsilon_1},
\end{align*}
which proves assertion (i).
\par
In view of (i), the abelianization $\Env(\mathcal{D}(S^G))_{ab}$ is generated by cosets $[e_s]$ for $s \in S$. Furthermore, these cosets satisfy relations $[e_{gsg^{-1}}]=[e_s]$ and $[e_s][e_{s'}]=[e_{s'}][e_s]$ for $s, s' \in S$ and $g \in G$. Thus, $[e_s]=[e_{s'}]$ in $\Env(\mathcal{D}(S^G))_{ab}$ if and only if $s, s' \in S$ are conjugate in $G$. Hence, $\Env(\mathcal{D}(S^G))_{ab}$ is a free abelian group of rank $c(S^G)$, which is assertion (ii).
 \end{proof}
 
We note that the second assertion in Theorem \ref{gen-group-dehn-quandle} also follows from \cite[Proposition 3.3(1)]{BardakovNasybullovSingh2019}. Let $G$ be a group generated by $S$ and $$\Phi: \Env(\mathcal{D}(S^G)) \to G$$ be the map defined by $\Phi(e_x)= x$ for $x \in \mathcal{D}(S^G)$. Since $\Phi(e_{x*y})=\Phi(e_{yxy^{-1}})=yxy^{-1}=\Phi(e_ye_xe_y^{-1})$ for all $x, y \in \mathcal{D}(S^G)$
and $S$ generates $G$, it follows that $\Phi$ is a surjective group homomorphism.

\begin{theorem}\label{central-extension}
Let $G$ be a group generated by $S$. Then 
\begin{equation}\label{exact-sequence-dehn-group}
1 \to \ker(\Phi) \to \Env(\mathcal{D}(S^G)) \to G \to 1
\end{equation}
is a central extension of groups. Further, if $G_{ab}$ has torsion, then the extension \eqref{exact-sequence-dehn-group} does not split.
\end{theorem}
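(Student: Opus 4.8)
The plan is to prove the two assertions separately, using the explicit defining relations of $\Env(\mathcal{D}(S^G))$ for centrality and Theorem \ref{gen-group-dehn-quandle}(ii) for the non-splitting.

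For the first assertion, writing $E=\Env(\mathcal{D}(S^G))$, I would first record the key conjugation identity: for every word $w\in E$ and every generator $e_x$ with $x\in\mathcal{D}(S^G)$,
\[
w\,e_x\,w^{-1}=e_{\Phi(w)\,x\,\Phi(w)^{-1}}.
\]
The right-hand side makes sense because $\Phi(w)\,x\,\Phi(w)^{-1}$ is again a conjugate of $x$, hence lies in $S^G=\mathcal{D}(S^G)$. The base case is precisely the defining relation $e_y e_x e_y^{-1}=e_{yxy^{-1}}$ together with the observation that $\Phi(e_y)=y$. I would then establish the identity by induction on the length of $w$ as a word in the generators $e_y^{\pm1}$, peeling off one generator at a time and applying the inductive hypothesis to the remaining word. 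Once the identity is in hand, take any $k\in\ker(\Phi)$; then $\Phi(k)=1$ yields $k\,e_x\,k^{-1}=e_x$ for every generator $e_x$, so $k$ commutes with all generators of $E$ and therefore lies in its centre. This gives $\ker(\Phi)\subseteq\Z(E)$, which is exactly the statement that the extension is central.

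For the second assertion, I would argue by contradiction using functoriality of abelianization. Suppose the sequence \eqref{exact-sequence-dehn-group} splits, so there is a group homomorphism $\sigma\colon G\to E$ with $\Phi\circ\sigma=\id$. Abelianizing and using that abelianization is a functor gives $\Phi_{ab}\circ\sigma_{ab}=\id$ on $G_{ab}$, so $\sigma_{ab}\colon G_{ab}\to E_{ab}$ is injective. But by Theorem \ref{gen-group-dehn-quandle}(ii) the group $E_{ab}\cong\mathbb{Z}^{c(S^G)}$ is torsion-free, and a torsion-free group cannot contain a nontrivial torsion subgroup. This contradicts the hypothesis that $G_{ab}$ has torsion, and hence the extension does not split. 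Note that this half does not even require the centrality established above.

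The main obstacle is the careful bookkeeping in the conjugation identity, in particular conjugation by inverse generators $e_y^{-1}$: here one must first rewrite the defining relation as $e_y^{-1}e_z e_y=e_{y^{-1}zy}$ before applying the inductive step. Once this identity is verified, both conclusions follow quickly, the second needing only the already-established computation of $E_{ab}$.
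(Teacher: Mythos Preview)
Your proof is correct and follows essentially the same approach as the paper: the conjugation identity $w\,e_x\,w^{-1}=e_{\Phi(w)\,x\,\Phi(w)^{-1}}$ you isolate is exactly the computation the paper carries out (writing $g\in\ker(\Phi)$ as a word in the $e_{s_i}^{\pm1}$ and iterating the defining relations), and both arguments finish the non-splitting by appealing to Theorem~\ref{gen-group-dehn-quandle}(ii). Your version of the second half is marginally cleaner in that it only uses functoriality of abelianization on a section $\sigma$, whereas the paper first invokes centrality to write $E\cong\ker(\Phi)\times G$ before abelianizing; as you observe, this makes your non-splitting argument independent of the first part.
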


\begin{proof}
By Theorem \ref{gen-group-dehn-quandle}(i), $\Env(\mathcal{D}(S^G))$ is generated by $\{e_s \mid s\in S \}$. Thus, any $g \in \ker(\Phi)$ can be written as $g=e_{s_1}^{\epsilon_1}e_{s_2}^{\epsilon_2} \cdots e_{s_k}^{\epsilon_k}$. This gives $s_1^{\epsilon_1}s_2^{\epsilon_2} \cdots s_k^{\epsilon_k}=\Phi(e_{s_1}^{\epsilon_1}e_{s_2}^{\epsilon_2} \cdots e_{s_k}^{\epsilon_k})=\Phi(g)=1$. Now, for any $e_x \in \Env(\mathcal{D}(S^G))$, we have
\begin{align*}
g e_x g^{-1}&=e_{s_1}^{\epsilon_1}e_{s_2}^{\epsilon_2} \cdots e_{s_k}^{\epsilon_k} e_x e_{s_k}^{-\epsilon_k}\cdots e_{s_2}^{-\epsilon_2} e_{s_1}^{-\epsilon_1}&\\
&=e_{x*^{\epsilon_k}s_k*^{\epsilon_{k-1}}s_{k-1}*^{\epsilon_{k-2}}\cdots*^{\epsilon_1}s_1}&\left(\text{by relations in}\; \Env\!\left(\dq\!\left(S^G\right)\right)\right)\\
&=e_{s_1^{\epsilon_1}s_2^{\epsilon_2}\cdots s_k^{\epsilon_k} x\;\!s_k^{-\epsilon_k}\cdots s_2^{-\epsilon_2}s_1^{-\epsilon_1}}&\\
&=e_x&\text{(since $s_1^{\epsilon_1}s_2^{\epsilon_2} \cdots s_k^{\epsilon_k}=1$)}.
\end{align*}
Hence, the extension \eqref{exact-sequence-dehn-group} is central. Now, suppose that $G_{ab}$ has torsion and \eqref{exact-sequence-dehn-group} splits. Then we have $\Env(\mathcal{D}(S^G)) \cong \ker(\Phi) \times G $. Taking abelianization and using Theorem \ref{gen-group-dehn-quandle}(ii), we get $\mathbb{Z}^{c(S^G)} \cong \Env(\mathcal{D}(S^G))_{ab} \cong \ker(\Phi) \times G_{ab}$, a contradiction. Hence, the sequence does not split.
\end{proof}

The next result gives characterisations of Dehn quandles.

\begin{proposition}\label{injectivity-dehn-eta}
The following statements are equivalent for any quandle $Q$:
\begin{enumerate}[(i)]
\item $Q$ embeds in $\Conj(H)$ for some group $H$.
\item The natural map $\eta: Q \to \Env(Q)$ is injective.
\item $Q \cong \mathcal{D}(S^G)$ for some group $G$ and a generating set $S$ of $G$.
\end{enumerate}
\end{proposition}

\begin{proof}
For (i) $\Rightarrow$ (ii), suppose that $\iota: Q \hookrightarrow \Conj(H)$ is an embedding for some group $H$. Then, by \cite[p.42]{Joyce1982}, there is a unique group homomorphism $\hat{\iota}: \Env(Q) \to H$ such that $\iota=\hat{\iota} ~\eta$. Thus, $\eta$ is injective. The implication (ii) $\Rightarrow$ (i) is obvious.
\par
For (ii) $\Rightarrow$ (iii), suppose that $Q$ is a quandle for which the map $\eta$ is injective. Let $A$ be the set of representatives of orbits (connected components) of $Q$. We claim that elements of $\eta(A)$ are pairwise non-conjugate in $\Env(Q)$. Let $x$ and $y$ be distinct elements of  $A$ such that $e_x=g e_y g^{-1}$, where $g= e_{x_1}^{\epsilon_1} e_{x_2}^{\epsilon_2} \cdots e_{x_k}^{\epsilon_k}$ for $x_i \in Q$ and $\epsilon_i \in \{ 1, -1\}$. Then, 
$$e_x=e_{x_1}^{\epsilon_1} e_{x_2}^{\epsilon_2} \cdots e_{x_k}^{\epsilon_k} e_y e_{x_k}^{-\epsilon_k} \cdots e_{x_2}^{-\epsilon_2} e_{x_1}^{-\epsilon_1} = e_{y *^{\epsilon_k} x_k *^{\epsilon_{k-1}} x_{k-1} *^{\epsilon_{k-2}} \cdots *^{\epsilon_1} x_1}.$$
Since $\eta$ is injective, it follows that $x=y *^{\epsilon_k} x_k *^{\epsilon_{k-1}} x_{k-1}*^{\epsilon_{k-2}} \cdots *^{\epsilon_1} x_1$, that is, $x$ and $y$ are in the same orbit, a contradiction. It now follows from \cite[Proposition 4.3 and Theorem 4.5]{BardakovNasybullov2020} that $Q\cong \mathcal{D}(\eta(A)^{\Env(Q)})$. If $S$ is a generating set for $Q$, then $\eta(S)$ is a generating set for $\Env(Q)$. Further, $S$ intersects every orbit of $Q$, and therefore we can choose a representative of an orbit from $S$. In other words, the set $A$ can be chosen to be a subset of $S$. For this choice of $A$, elements of $\eta(S)$ are conjugates of elements of $\eta(A)$ in $\Env(Q)$. Thus, $\eta(S)^{\Env(Q)}=\eta(A)^{\Env(Q)}$ as sets, and hence $Q\cong \dq(\eta(S)^{\Env(Q)})$.
\par
For (iii) $\Rightarrow$ (ii), suppose that $Q \cong \mathcal{D}(S^G)$ for some group $G$ and a generating set $S$ of $G$. Let $\tau: \mathcal{D}(S^G) \to \Conj(G)$ be the natural embedding. Further, we have quandle homomorphisms $\eta: \mathcal{D}(S^G) \to \Conj(\Env(\mathcal{D}(S^G)))$ and $\Phi: \Conj(\Env(\mathcal{D}(S^G))) \to \Conj(G)$. Since $\Phi ~\eta=\tau$, it follows that $\eta$ is injective.
\end{proof}

In fact, if $Q$ is a subquandle of $\Conj(G)$ for some group $G$, then $Q \cong \dq(Q^{\langle Q \rangle })$, where $\langle Q \rangle$ is the subgroup of $G$ generated by $Q$. Following are some immediate consequences of Proposition \ref{injectivity-dehn-eta}.

\begin{corollary}\label{subquandle_of_conj_is_dehn_quandle}
If $G$ is a group, then every subquandle of $\Conj(G)$ is a Dehn quandle.
\end{corollary}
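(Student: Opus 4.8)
The plan is to read this off directly from Theorem \ref{injectivity-dehn-eta}. Let $Q$ be a subquandle of $\Conj(G)$. Then the inclusion $Q \hookrightarrow \Conj(G)$ is a quandle embedding, so condition (i) of Theorem \ref{injectivity-dehn-eta} is satisfied with $H = G$. By the equivalence (i) $\Leftrightarrow$ (iii) established in that theorem, condition (iii) holds as well, i.e. $Q \cong \dq(S^{G'})$ for some group $G'$ and a generating set $S$ of $G'$. Thus $Q$ is (isomorphic to) a Dehn quandle, which is exactly the assertion.

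To make the statement concrete rather than abstract, I would instead exhibit the group and generating set explicitly, recovering the sharper form noted just before the corollary. Set $H = \langle Q \rangle$, the subgroup of $G$ generated by the underlying set of $Q$; then $Q$ generates $H$ as a group by construction. The one computation to carry out is the closure identity $Q^{H} = Q$. The inclusion $Q \subseteq Q^{H}$ is immediate. For the reverse inclusion, note that since $Q$ is a subquandle of $\Conj(G)$ it is closed under both $x*y = yxy^{-1}$ and $x*^{-1}y = y^{-1}xy$ for $x,y \in Q$; that is, conjugation and inverse-conjugation by each element of $Q$ map $Q$ into itself. Every element of $H$ is a word in elements of $Q$ and their inverses, so conjugation by an arbitrary $h \in H$ is a finite composition of such conjugations and hence preserves $Q$. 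Therefore $Q^{H} = Q$ as sets, and since the binary operation on both sides is conjugation in $G$, we obtain $Q = \dq(Q^{\langle Q \rangle})$ as quandles.

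There is no serious obstacle here: the corollary is a formal consequence of the characterisation, and the only content beyond invoking Theorem \ref{injectivity-dehn-eta} is the elementary verification that closure of $Q$ under $*$ and $*^{-1}$ forces $Q$ to absorb all conjugates by the subgroup it generates. The subtlety worth flagging is that one must use closure under \emph{both} $*$ and $*^{-1}$ (equivalently, invertibility of the inner automorphisms guaranteed by quandle axiom (ii)) to handle conjugation by inverse generators, so that the full subgroup $\langle Q \rangle$, and not merely the submonoid generated by $Q$, acts by permutations of $Q$.
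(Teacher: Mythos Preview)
Your proposal is correct and matches the paper's approach: the paper states the corollary as an immediate consequence of Theorem \ref{injectivity-dehn-eta} with no separate proof, and also records the explicit identification $Q \cong \dq(Q^{\langle Q \rangle})$ that you verify in your second paragraph. Your argument simply spells out what the paper leaves implicit.
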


\begin{corollary}\label{knot-prime-dehn-quandle}
A knot $K$ is prime if and only if its knot quandle $Q(K)$ is isomorphic to $\mathcal{D}(x^{G(K)})$, where $x$ is a Wirtinger generator of the knot group $G(K)$.
\end{corollary}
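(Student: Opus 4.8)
The plan is to derive the corollary from Theorem \ref{injectivity-dehn-eta} together with the classical description of the peripheral structure of a knot complement. First I would record two standard facts about the knot quandle $Q(K)$: its enveloping group is the knot group, so $\Env(Q(K)) \cong G(K)$, and $Q(K)$ is connected, being generated as a quandle by a single Wirtinger generator $x$ (any arc of a diagram is linked to any other through the crossing relations). Since all Wirtinger generators are mutually conjugate in $G(K)$ and together generate it, the conjugacy class $x^{G(K)}$ generates $G(K)$, so that $\mathcal{D}(x^{G(K)})$ is indeed a Dehn quandle with respect to a generating set.

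I would then apply Theorem \ref{injectivity-dehn-eta}. By the equivalence of its conditions (ii) and (iii), and by inspecting the construction in that proof, the natural map $\eta: Q(K) \to \Env(Q(K))$ is injective if and only if $Q(K) \cong \mathcal{D}(\eta(A)^{\Env(Q(K))})$ for a set $A$ of orbit representatives of $Q(K)$. As $Q(K)$ is connected we may take $A = \{x\}$, and under $\Env(Q(K)) = G(K)$ this becomes $Q(K) \cong \mathcal{D}(x^{G(K)})$; conversely, if $Q(K) \cong \mathcal{D}(x^{G(K)})$ then $Q(K)$ embeds in $\Conj(G(K))$ and (i)$\Rightarrow$(ii) gives injectivity of $\eta$. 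Thus the whole statement reduces to the single equivalence that $\eta$ is injective if and only if $K$ is prime.

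To settle this equivalence I would argue through stabilisers, exploiting that $\eta$ is $G(K)$-equivariant, where $G(K)$ acts on $Q(K)$ through its enveloping group and on $x^{G(K)} \subseteq \Conj(G(K))$ by conjugation. Since $Q(K)$ is connected, two elements have equal image under $\eta$ exactly when they differ by an element of the centraliser $\C_{G(K)}(x)$, whereas they are equal in $Q(K)$ exactly when they differ by an element of the stabiliser of the base meridian; and this stabiliser, under Matveev's description of the fundamental quandle \cite{Matveev1982}, is precisely the peripheral subgroup $P = \langle x, l \rangle$, with $l$ the longitude. As the stabiliser is always contained in the centraliser, injectivity of $\eta$ is equivalent to the equality $\C_{G(K)}(x) = P$.

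The remaining and genuinely topological input, which I expect to be the crux, is the classical fact that $\C_{G(K)}(x) = P$ holds if and only if $K$ is prime. The composite case is transparent: writing $K = K_1 \# K_2$ gives $G(K) \cong G(K_1) *_{\langle x\rangle} G(K_2)$, and the two summand longitudes $l_1, l_2$ both centralise $x$, so $\C_{G(K)}(x) \supseteq \langle x, l_1, l_2\rangle \supsetneq \langle x, l_1 l_2\rangle = P$ and $\eta$ fails to be injective. The prime direction is the delicate one; here I would invoke the known results on centralisers of meridians and the peripheral/JSJ theory of knot complements, since this is exactly the point at which purely quandle-theoretic reasoning must be supplemented by three-manifold topology.
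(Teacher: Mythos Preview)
Your proposal is correct and follows essentially the same route as the paper: reduce the statement, via Theorem \ref{injectivity-dehn-eta} and the connectedness of $Q(K)$, to the equivalence ``$K$ is prime $\Leftrightarrow$ $\eta:Q(K)\to G(K)$ is injective''. The paper simply cites this equivalence as Ryder's theorem \cite{Ryder1996}, whereas you partially unpack it (handling the composite direction explicitly and deferring the prime direction to the peripheral/centraliser literature), which amounts to re-deriving Ryder's result rather than invoking it.
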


\begin{proof}
It is known from \cite{Ryder1996} that $K$ is a prime knot if and only if the map $\eta: Q(K) \to G(K)$ is injective. Since the knot quandle $Q(K)$ is known to be connected \cite[Corollary 15.3]{Joyce1982}, the assertion now follows from Proposition \ref{injectivity-dehn-eta}.
\end{proof}

As an application of Corollary \ref{knot-prime-dehn-quandle}, we shall retrieve the main result of \cite{NiebrzydowskiPrzytycki2009} in Theorem \ref{Niebrzydowski Przytycki theorem}.

\begin{proposition}\label{core-are-dehn}
The core quandle of a group is a Dehn quandle of some group.
\end{proposition}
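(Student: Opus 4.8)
The plan is to use the characterisation in Theorem \ref{injectivity-dehn-eta}: it suffices to produce a group $H$ into whose conjugation quandle $\Core(G)$ embeds, for then $\Core(G)$ is automatically a Dehn quandle (indeed $\Core(G)\cong \mathcal{D}(Q^{\langle Q\rangle})$ by the remark following that theorem). The core operation $x*y=yx^{-1}y$ is involutory, which suggests realising the elements of $G$ as reflections inside a group generated by such involutions.

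Concretely, I would take $H$ to be the generalised dihedral group $G\rtimes\langle t\rangle$, where $t^2=1$ and $t$ acts on $G$ by inversion, so that $txt^{-1}=x^{-1}$, equivalently $ta=a^{-1}t$ for all $a\in G$. Define $\phi:\Core(G)\to\Conj(H)$ by $\phi(x)=xt$. This map is injective, since $xt=x't$ forces $x=x'$, and its image is the set of reflections $Gt$.

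The key step is to verify that $\phi$ is a quandle homomorphism, which is a short computation in the semidirect product. Using $t^2=1$ and $ta=a^{-1}t$ one finds $(yt)(xt)=yx^{-1}$ and $(yt)^{-1}=yt$, so that
\[
\phi(y)\,\phi(x)\,\phi(y)^{-1}=(yt)(xt)(yt)=(yx^{-1}y)\,t=\phi(yx^{-1}y)=\phi(x*y).
\]
Hence $\phi$ embeds $\Core(G)$ into $\Conj(H)$, and Theorem \ref{injectivity-dehn-eta} yields that $\Core(G)$ is a Dehn quandle; in fact $\Core(G)\cong\mathcal{D}(A^H)$ with $A=Gt=\phi(\Core(G))$, since $A$ generates $H$ (note $t=\phi(1)\in A$ and $g=(gt)t\in\langle A\rangle$ for every $g\in G$). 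I do not expect any genuine obstacle here beyond selecting the right ambient group: once the generalised dihedral group is chosen, the quandle-homomorphism identity is forced by the semidirect-product relations, and the conclusion follows immediately from the characterisation of Dehn quandles.
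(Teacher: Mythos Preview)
Your proof is correct and follows the same overall strategy as the paper: embed $\Core(G)$ into a conjugation quandle and invoke Theorem~\ref{injectivity-dehn-eta}. The paper simply cites \cite[Proposition~6.2]{Bergman2021} for the existence of such an embedding, whereas you construct it explicitly via the generalised dihedral group $H=G\rtimes\langle t\rangle$ and the map $x\mapsto xt$; your version is thus more self-contained (and indeed recovers the construction underlying Bergman's result), but the argument is essentially the same.
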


\begin{proof}
If $G$ is a group, then it follows from \cite[Proposition 6.2]{Bergman2021} that $\Core(G)$ embeds in $\Conj(H)$ for some group $H$. The result now follows from Proposition \ref{injectivity-dehn-eta}.
\end{proof}

 An automorphism of a group is {\it fixed-point free} if it does not fix any non-identity element of the group.

\begin{proposition}\label{alexander_quandle_is_dehn_quandle}
Let $G$ be a group and $\phi$ a fixed-point free automorphism of $G$. Then the generalised Alexander quandle $\Alex(G,\phi)$ is a Dehn quandle.
\end{proposition}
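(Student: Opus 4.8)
The plan is to apply Theorem \ref{injectivity-dehn-eta}: it suffices to exhibit an injective quandle homomorphism from $\Alex(G,\phi)$ into $\Conj(H)$ for some group $H$, for then the image is a subquandle of $\Conj(H)$ and the quandle is a Dehn quandle. The natural host is the semidirect product $H=G\rtimes_\phi\mathbb{Z}=\langle\,G,\,t\mid txt^{-1}=\phi(x)\ \text{for all } x\in G\,\rangle$, inside which the coset $Gt$ is closed under conjugation by its own elements. A direct computation using $txt^{-1}=\phi(x)$ gives, for $a,b\in G$, the identity $(bt)(at)(bt)^{-1}=b\,\phi(a)\,\phi(b)^{-1}\,t$, so that $Gt$, identified with $G$ via $g\mapsto gt$, carries the operation $(a,b)\mapsto b\,\phi(a)\,\phi(b)^{-1}$.

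The first thing I would point out is that the naive identification $x\mapsto xt$ does \emph{not} realise $\Alex(G,\phi)$, since its operation $x*y=\phi(xy^{-1})y=\phi(x)\phi(y)^{-1}y$ carries the factor $y$ on the right rather than on the left. The device that repairs this is to precompose with the twist $\nu\colon G\to G$, $\nu(x)=x^{-1}\phi(x)$, and to define $f\colon\Alex(G,\phi)\to\Conj(H)$ by $f(x)=x^{-1}\phi(x)\,t$. Writing $a=x^{-1}\phi(x)$ and $b=y^{-1}\phi(y)$ in the coset formula above and expanding $\phi(a)=\phi(x)^{-1}\phi^2(x)$, $\phi(b)^{-1}=\phi^2(y)^{-1}\phi(y)$, one checks that
\[
f(x*y)=\bigl(y^{-1}\phi(y)\,\phi(x)^{-1}\,\phi^2(x)\,\phi^2(y)^{-1}\,\phi(y)\bigr)\,t=f(y)\,f(x)\,f(y)^{-1},
\]
so that $f$ is a quandle homomorphism into $\Conj(H)$.

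It remains to verify injectivity, and this is precisely the step that uses the fixed-point-free hypothesis, and hence the main point of the argument. If $f(x)=f(y)$, then comparing the $G$-components gives $x^{-1}\phi(x)=y^{-1}\phi(y)$, whence $yx^{-1}=\phi(y)\phi(x)^{-1}=\phi(yx^{-1})$; as $\phi$ fixes no non-trivial element, $yx^{-1}=1$ and $x=y$. Thus $f$ is an embedding of $\Alex(G,\phi)$ into $\Conj(H)$, and Theorem \ref{injectivity-dehn-eta} (together with the remark following it) shows that $\Alex(G,\phi)$ is a Dehn quandle. The only genuinely non-routine part is discovering the twist $\nu$: once it is in hand, the homomorphism property is a mechanical computation and the fixed-point-free condition enters solely, but essentially, to make $\nu$ injective.
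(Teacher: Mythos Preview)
Your argument is correct: the map $f(x)=x^{-1}\phi(x)\,t$ into $G\rtimes_\phi\mathbb{Z}$ is a quandle homomorphism (your computation checks out), and injectivity of $\nu(x)=x^{-1}\phi(x)$ is exactly equivalent to $\phi$ being fixed-point free. So the proposal establishes the proposition.

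The paper, however, avoids the semidirect-product construction and the search for the twist $\nu$ entirely. It simply uses the canonical map $x\mapsto S_x$ from $\Alex(G,\phi)$ into $\Conj(\Inn(\Alex(G,\phi)))$, which is always a quandle homomorphism for any quandle. Since $S_x(z)=\phi(z)\phi(x)^{-1}x$, the equality $S_x=S_y$ forces $\phi(x)^{-1}x=\phi(y)^{-1}y$, i.e.\ $\phi(xy^{-1})=xy^{-1}$, and fixed-point freeness gives $x=y$. The two approaches reduce to the identical injectivity check (your $\nu$ is injective precisely when the paper's $x\mapsto S_x$ is), but the paper's route is shorter and more conceptual: there is no need to guess a host group or a twist, because the inner automorphism group is always available. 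Your construction, on the other hand, has the virtue of being concrete---it realises the Dehn quandle inside an explicit, well-understood group rather than inside the a priori mysterious $\Inn(\Alex(G,\phi))$.
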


\begin{proof}
We claim that the natural quandle homomorphism $\Alex(G,\phi)\to\Conj(\Inn(\Alex(G,\phi)))$ defined by $x\mapsto S_x$ is injective. Suppose that $S_x=S_y$ for $x,y\in\Alex(G,\phi)$. Then, we have 
$S_x(z)=S_y(z)$ for all $z \in \Alex(G,\phi)$. This give $\phi(z)\phi(x)^{-1}x=\phi(z)\phi(y)^{-1}y$, which further implies that $\phi(xy^{-1})=xy^{-1}$. Since $\phi$ is fixed-point free,  we must have $x=y$.
Thus, $\Alex(G,\phi)$ embeds in $\Conj(\Inn(\Alex(G,\phi)))$, and the result now follows from Corollary \ref{subquandle_of_conj_is_dehn_quandle}.
\end{proof}
\medskip

Recall that a discrete group is said to be {\it rationally acyclic} if its homology groups with coefficients in the trivial module $\mathbb{Q}$ vanish in all dimensions more than zero. Examples of rationally acyclic groups include Coxeter groups \cite[Theorem 15.1.1]{Davis2008}, Higman's group, binate groups \cite{Berrick1989} and symmetric groups on infinite sets \cite{HarpeMcDuff}. Let $\rk(G)$ denote the torsion-free rank of an abelian group $G$, that is, $\rk(G) = \dim_{\mathbb{Q}} (G \otimes_{\mathbb{Z}} \mathbb{Q})$. The following result generalises \cite[Proposition 4.5]{TAkita}.

\begin{theorem}
If $G$ is a rationally acyclic group generated by $S$, then $\rk (\ker(\Phi))=c(S^G)$. Further, there is an isomorphism of cohomology rings 
\begin{equation*}
\mathrm{H}^*(\Env(\mathcal{D}(S^G)), \mathbb{Q}) \cong \mathrm{H}^*(\ker(\Phi), \mathbb{Q}).
\end{equation*}
\end{theorem}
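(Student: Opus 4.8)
The plan is to feed the central extension
\[
1 \to \ker(\Phi) \to \Env(\mathcal{D}(S^G)) \to G \to 1
\]
from Theorem \ref{central-extension} into the Lyndon--Hochschild--Serre spectral sequence with rational coefficients. Because the extension is central, $\ker(\Phi)$ is abelian and the conjugation action of $G$ on it, hence on each $\mathrm{H}^q(\ker(\Phi),\mathbb{Q})$, is trivial. Since $\mathbb{Q}$ is a field and the fibre cohomology carries the trivial $G$-action, the $E_2$-page splits as a tensor product,
\[
E_2^{p,q} \cong \mathrm{H}^p(G,\mathbb{Q}) \otimes_{\mathbb{Q}} \mathrm{H}^q(\ker(\Phi),\mathbb{Q}) \; \Longrightarrow \; \mathrm{H}^{p+q}(\Env(\mathcal{D}(S^G)),\mathbb{Q}).
\]

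Next I would use rational acyclicity of $G$, which forces $\mathrm{H}^p(G,\mathbb{Q})=0$ for $p>0$, so the $E_2$-page is concentrated in the single column $p=0$, where $E_2^{0,q}=\mathrm{H}^q(\ker(\Phi),\mathbb{Q})$. Every differential $d_r$ strictly raises the column index $p$, so all differentials vanish, the sequence degenerates at $E_2$, and the edge homomorphism gives $\mathrm{H}^n(\Env(\mathcal{D}(S^G)),\mathbb{Q}) \cong \mathrm{H}^n(\ker(\Phi),\mathbb{Q})$ for all $n$. To promote this to an isomorphism of cohomology rings I would invoke the multiplicative structure of the spectral sequence: when $E_\infty$ lives entirely in the column $p=0$, the surviving edge map is exactly the restriction map $\mathrm{H}^*(\Env(\mathcal{D}(S^G)),\mathbb{Q}) \to \mathrm{H}^*(\ker(\Phi),\mathbb{Q})$, which is a ring homomorphism and, by the collapse, a degreewise bijection, hence a ring isomorphism.

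For the rank identity I would specialise to degree one, where it is cleanest to run the homology version of the same collapsing argument: it yields $\mathrm{H}_1(\Env(\mathcal{D}(S^G)),\mathbb{Q}) \cong \mathrm{H}_1(\ker(\Phi),\mathbb{Q})$. The left-hand side is $\Env(\mathcal{D}(S^G))_{ab}\otimes_{\mathbb{Z}}\mathbb{Q} \cong \mathbb{Q}^{c(S^G)}$ by Theorem \ref{gen-group-dehn-quandle}(ii), while the right-hand side is $\ker(\Phi)\otimes_{\mathbb{Z}}\mathbb{Q}$, a $\mathbb{Q}$-vector space of dimension exactly $\rk(\ker(\Phi))$ since $\ker(\Phi)$ is abelian; comparing dimensions gives $\rk(\ker(\Phi))=c(S^G)$. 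Using homology here sidesteps any subtlety about dualising when $c(S^G)$ is infinite. The main obstacle is the ring-level claim rather than the additive one: I must confirm that the Lyndon--Hochschild--Serre spectral sequence is genuinely multiplicative and that its vertical edge homomorphism is the multiplicative restriction map, so that single-column degeneration forces a ring isomorphism and not merely a graded-vector-space isomorphism.
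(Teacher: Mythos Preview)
Your proposal is correct and follows essentially the same approach as the paper: both arguments feed the central extension of Theorem~\ref{central-extension} into the Lyndon--Hochschild--Serre spectral sequence, use rational acyclicity of $G$ (via the universal coefficient theorem) to collapse it to the column $p=0$, and read off the rank identity from degree one. The only cosmetic difference is that the paper extracts the degree-one statement from the five-term exact sequence rather than the full spectral-sequence collapse, and your treatment of the multiplicative structure via the restriction edge map is actually more explicit than the paper's.
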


\begin{proof}
By \cite[Chapter VII, Corollary 6.4]{Brown1982}, the central extension 
\begin{equation}\label{central-extension-equation}
1 \to \ker(\Phi) \to \Env(\mathcal{D}(S^G)) \to G \to 1
\end{equation}
 gives the exact sequence 
$$\mathrm{H}_2(G,\mathbb{Q}) \to \mathrm{H}_1(\ker(\Phi) ,\mathbb{Q}) \to \mathrm{H}_1( \Env(\mathcal{D}(S^G)), \mathbb{Q}) \to \mathrm{H}_1(G,\mathbb{Q}) \to 0$$ of rational homology groups. Since $G$ is rationally acyclic, the preceding exact sequence gives
$\mathrm{H}_1(\ker(\Phi) ,\mathbb{Q}) \cong \mathrm{H}_1( \Env(\mathcal{D}(S^G)), \mathbb{Q})$. Thus, we have

\begin{align*}
\rk (\ker(\Phi))&=\dim_{\mathbb{Q}}\!\left(\ker(\Phi) \otimes_{\mathbb{Z}} \mathbb{Q}\right)&\\
&=\dim_{\mathbb{Q}} \mathrm{H}_1\!\left(\ker(\Phi), \mathbb{Q}\right)&\\
&=\dim_{\mathbb{Q}} \mathrm{H}_1\!\left(\Env\!\left(\mathcal{D}\!\left(S^G\right)\right), \mathbb{Q}\right)&\\
&=\dim_{\mathbb{Q}}\!\left(\Env\!\left(\mathcal{D}\!\left(S^G\right)\right)_{ab} \otimes_{\mathbb{Z}} \mathbb{Q}\right)&\\
&=c(S^G)&\text{(by Theorem \ref{gen-group-dehn-quandle})}.
\end{align*}
The second assertion follows by applying the Lyndon-Hochschild-Serre spectral sequence $E_r^{*,*}$ to the central extension \eqref{central-extension-equation}. Notice that the second page terms of the spectral sequence are given by $E_2^{p, q}= \mathrm{H}^p (G, H^q(\ker(\Phi),\mathbb{Q}))$ and the spectral sequence converges to $\mathrm{H}^{p+q}(\Env(\mathcal{D}(S^G)), \mathbb{Q})$. Since $G$ is rationally acyclic, the universal coefficient theorem gives $\mathrm{H}^p(G,\mathbb{Q}) = 0$ for $p \ge 1$. This together with the fact that the extension is central implies that $E_2^{p, q}= \mathrm{H}^p (G, \mathbb{Q}) \otimes \mathrm{H}^q(\ker(\Phi),\mathbb{Q})= \mathrm{H}^q(\ker(\Phi),\mathbb{Q})$ for $p =0$ and 0 for $p \ge 1$.
\end{proof}

\begin{theorem}\label{ker-phi-conjugacy-relation-trivial}
If $G$ is a group with a presentation $\langle S \mid R\rangle$ such that $R$ consists of relations only of the form $w s w^{-1}= t$ for some $s, t \in S$ and $w \in G$, then $$\Env(\mathcal{D}(S^G))\cong G.$$
\end{theorem}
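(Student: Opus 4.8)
The plan is to show that the surjection $\Phi\colon \Env(\mathcal{D}(S^G)) \to G$ constructed just before Theorem \ref{central-extension} is in fact an isomorphism, by producing an explicit two-sided inverse. Since $G$ has the presentation $\langle S \mid R\rangle$, I would first send each generator $s \in S$ to $e_s$; this determines a homomorphism $\tilde{\Psi}$ from the free group on $S$ into $\Env(\mathcal{D}(S^G))$, and $\tilde{\Psi}$ descends to a homomorphism $\Psi\colon G \to \Env(\mathcal{D}(S^G))$ precisely when every relator of $R$ is carried to the identity.

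Verifying this is the heart of the matter, and it is exactly here that the hypothesis that $R$ consists only of conjugation relations $w s w^{-1} = t$ enters. Writing $w = s_1^{\epsilon_1}\cdots s_k^{\epsilon_k}$ as a word in $S$ and applying the defining relations $e_{yxy^{-1}} = e_y e_x e_y^{-1}$ of the enveloping group (together with their duals $e_{y^{-1}xy} = e_y^{-1} e_x e_y$) repeatedly, peeling off one generator at a time, I would obtain
\begin{equation*}
\tilde{\Psi}(w)\, e_s\, \tilde{\Psi}(w)^{-1} = e_{s_1}^{\epsilon_1}\cdots e_{s_k}^{\epsilon_k}\, e_s\, e_{s_k}^{-\epsilon_k}\cdots e_{s_1}^{-\epsilon_1} = e_{\,s_1^{\epsilon_1}\cdots s_k^{\epsilon_k}\, s\, s_k^{-\epsilon_k}\cdots s_1^{-\epsilon_1}} = e_{wsw^{-1}},
\end{equation*}
where at each step the inner argument is a conjugate of $s$, hence an element of $\mathcal{D}(S^G)$, and the outer generator $s_i \in S \subseteq \mathcal{D}(S^G)$, so the relations legitimately apply; the final subscript is read as an element of the quandle $\mathcal{D}(S^G) \subseteq G$. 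Because the relation $wsw^{-1} = t$ holds in $G$, the elements $wsw^{-1}$ and $t$ coincide in $\mathcal{D}(S^G)$, whence $e_{wsw^{-1}} = e_t = \tilde{\Psi}(t)$. Thus each relator $wsw^{-1}t^{-1}$ maps to the identity, and $\Psi$ is a well-defined homomorphism.

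It then remains to check that $\Phi$ and $\Psi$ are mutually inverse, which is purely formal. On the generators $s$ of $G$ we have $\Phi\Psi(s) = \Phi(e_s) = s$, so $\Phi\Psi = \id_G$; and on the generators $e_s$ of $\Env(\mathcal{D}(S^G))$, which generate by Theorem \ref{gen-group-dehn-quandle}(i), we have $\Psi\Phi(e_s) = \Psi(s) = e_s$, so $\Psi\Phi = \id$. Two homomorphisms that agree with the identity on a generating set are the identity, so $\Phi$ is an isomorphism with inverse $\Psi$, giving $\Env(\mathcal{D}(S^G)) \cong G$.

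I expect the only genuine obstacle to be the bookkeeping in the displayed conjugation computation, in particular confirming that collapsing the conjugated product of the $e_{s_i}$ via the enveloping-group relations depends only on the image of $w$ in $G$, so that the resulting subscript is honestly the group element $wsw^{-1}$ and hence equals $t$. Everything else follows mechanically once the relators are shown to die, and it is worth emphasising that this killing is special to conjugation-type relations: for a relation not of this form the image under $\tilde{\Psi}$ need not reduce to a word in a single $e_t$, which is why the hypothesis on $R$ is essential.
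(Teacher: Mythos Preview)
Your proposal is correct and follows essentially the same approach as the paper: define a candidate inverse $\Psi\colon G\to\Env(\mathcal{D}(S^G))$ by $s\mapsto e_s$, verify that each conjugation relator $wsw^{-1}t^{-1}$ is killed via the enveloping-group relations (the displayed computation is exactly the paper's), and then check that $\Phi$ and $\Psi$ are mutually inverse on generators, invoking Theorem~\ref{gen-group-dehn-quandle}(i) for the generating set $\{e_s : s\in S\}$. The paper only explicitly checks one composite and uses the known surjectivity of $\Phi$, whereas you check both, but this is an inessential variation.
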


\begin{proof}
We show that the surjective group homomorphism $\Phi: \Env(\mathcal{D}(S^G)) \to G$ is, in fact, an isomorphism. Any relation in the presentation $\langle S \mid R\rangle$ of $G$ is of the form $w s w^{-1}= t$ for some $s, t \in S$ and $w \in G$. We write $w=s_1^{\epsilon_1} s_2^{\epsilon_2} \cdots s_k^{\epsilon_k}$ for some $s_i \in S$ and $\epsilon_i \in \{1, -1\}$. Define a homomorphism $\psi: F(S) \to \Env(\mathcal{D}(S^G))$ by setting $\psi(s)= e_s$ for each $s\in S$, where $F(S)$ is the free group on $S$. By a repeated use of relations of the form $e_{y^\epsilon xy^{-\epsilon}} = e_y^\epsilon e_x e_y^{-\epsilon}$ in $\Env(\mathcal{D}(S^G))$, we see that
\begin{align*}
\psi(w s w^{-1}) &= \psi\!\left(s_1^{\epsilon_1} s_2^{\epsilon_2} \cdots s_k^{\epsilon_k} s s_k^{-\epsilon_k} \cdots s_2^{-\epsilon_2} s_1^{-\epsilon_1}\right)\\
&= e_{s_1}^{\epsilon_1} e_{s_2}^{\epsilon_2} \cdots e_{s_k}^{\epsilon_k} e_s e_{s_k}^{-\epsilon_k} \cdots e_{s_2}^{-\epsilon_2} e_{s_1}^{-\epsilon_1}\\
&= e_{s_1^{\epsilon_1} s_2^{\epsilon_2} \cdots s_k^{\epsilon_k} s s_k^{-\epsilon_k} \cdots s_2^{-\epsilon_2} s_1^{-\epsilon_1}}\\
&= e_{w s w^{-1}}\\
&= e_t\\
&=\psi(t).
\end{align*}
Thus, we have a homomorphism $\widetilde{\psi}: G \to \Env(\mathcal{D}(S^G))$. By Theorem \ref{gen-group-dehn-quandle}(i), $ \Env(\mathcal{D}(S^G))$ is generated by $\{e_s \mid s \in S \}$, and hence $\widetilde{\psi}$ is surjective. Finally, since $\widetilde{\psi}\Phi$ is the identity map, it follows that $\Phi$ is an isomorphism of groups.
\end{proof}

Recall that an Artin group $\Artin$ is a group with a presentation $$\Artin=\{s_1,s_2,\ldots,s_n~\mid~ (s_is_j)_{m_{ij}}=(s_js_i)_{m_{ij}}, m_{ij}\in \{2,3,4,\ldots\} \cup \{ \infty\}\},$$ where $(s_is_j)_{m_{ij}}$ is the word $s_is_js_is_j s_i \ldots$ of length $m_{ij}$ if $m_{ij} < \infty$ and there is no relation $(s_is_j)_{m_{ij}}=(s_js_i)_{m_{ij}}$ if $m_{ij}=\infty$. We set $S= \{ s_1, s_2,\ldots, s_n \}$ and refer to $m_{ij}$'s as exponents. Notice that $\Artin_{ab}\cong \mathbb{Z}^{c(S^\Artin)}$. A Coxeter group $\mathcal{W}$ is a quotient of $\Artin$ by imposing additional relations $s_i^2=1$ for all $s_i \in S$. The pair $(\mathcal{W}, S)$ is referred as a Coxeter system. For brevity, Dehn quandles of Artin and Coxeter groups with respect to their standard generating sets will be referred as Artin and Coxeter quandles, respectively.

\begin{corollary}\label{ker-phi-artin-trivial}
If $\Artin$ is an Artin group generated by $S$, then $\Env(\mathcal{D}(S^\Artin))\cong\Artin$.
\end{corollary}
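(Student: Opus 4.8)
The plan is to deduce Corollary \ref{ker-phi-artin-trivial} directly from Theorem \ref{ker-phi-conjugacy-relation-trivial}. The latter asserts that whenever a group $G$ admits a presentation $\langle S \mid R\rangle$ in which every relator has the conjugation form $w s w^{-1} = t$ with $s, t \in S$ and $w \in G$, then the enveloping group $\Env(\mathcal{D}(S^G))$ is isomorphic to $G$ itself. So the whole task reduces to rewriting the standard Artin presentation into this conjugation-relation form, using the same generating set $S = \{s_1, \ldots, s_n\}$.

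The key step is therefore the algebraic observation that each braid relation $(s_i s_j)_{m_{ij}} = (s_j s_i)_{m_{ij}}$ can be recast as a relation of the type $w s_i w^{-1} = s_j$ or $w s_j w^{-1} = s_i$. First I would treat the even and odd cases of $m = m_{ij}$ separately. When $m$ is even, the braid relation $s_i s_j s_i \cdots = s_j s_i s_j \cdots$ (each side of length $m$) has $s_i$ on the left of one side and $s_j$ on the left of the other, and a short manipulation shows it is equivalent to conjugating $s_i$ by the alternating word of length $m-1$ to obtain $s_i$ back (a self-conjugation), or more usefully to an equation expressing $s_i$ as a conjugate of $s_i$ by a word in $s_i, s_j$; when $m$ is odd the two sides begin with different generators and the relation rearranges to $w s_i w^{-1} = s_j$ for the appropriate alternating word $w$ of length $m-1$. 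The cleanest uniform statement is that in either parity the braid relation is equivalent, over the free group on $S$, to an equation of the form $w s_i w^{-1} = s'$ with $s' \in \{s_i, s_j\} \subseteq S$ and $w$ a word in $S$; this is precisely the hypothesis required by Theorem \ref{ker-phi-conjugacy-relation-trivial}.

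The main obstacle is purely bookkeeping: verifying that the rewriting of the braid relation genuinely lands in the prescribed form for both parities of $m_{ij}$, and confirming that the resulting presentation defines the \emph{same} Artin group $\Artin$ on the \emph{same} generating set $S$ (so that $\mathcal{D}(S^\Artin)$ is literally the Artin quandle appearing in the statement). Since Tietze transformations between the braid form and the conjugation form only reshuffle the relators without altering generators, the group and the conjugacy classes of generators are unchanged, and the case $m_{ij} = \infty$ contributes no relation at all, so it causes no difficulty. Once the presentation is exhibited in conjugation form, Theorem \ref{ker-phi-conjugacy-relation-trivial} applies verbatim and yields $\Env(\mathcal{D}(S^\Artin)) \cong \Artin$, completing the proof.
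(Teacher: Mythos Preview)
Your proposal is correct and follows essentially the same approach as the paper: rewrite each Artin relation $(s_is_j)_{m_{ij}}=(s_js_i)_{m_{ij}}$ as a conjugation relation between generators (splitting into the even and odd cases for $m_{ij}$), then invoke Theorem~\ref{ker-phi-conjugacy-relation-trivial}. The paper is simply more explicit, writing the conjugation form as $(s_is_j)_{m_{ij}-1}\,s_j\,((s_is_j)_{m_{ij}-1})^{-1}=s_j$ for $m_{ij}$ even and $(s_is_j)_{m_{ij}-1}\,s_i\,((s_is_j)_{m_{ij}-1})^{-1}=s_j$ for $m_{ij}$ odd.
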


\begin{proof}
Notice that a relation $(s_is_j)_{m_{ij}}=(s_js_i)_{m_{ij}}$ in $\Artin$ can be written in terms of conjugation as
\begin{eqnarray*}
(s_is_j)_{m_{ij}-1} ~s_j~ ((s_is_j)_{m_{ij}-1})^{-1}=s_j & if & m_{ij}~\textrm{is even}, \label{artin-relation-1}\\
(s_is_j)_{m_{ij}-1} ~s_i~ ((s_is_j)_{m_{ij}-1})^{-1}=s_j & if & m_{ij}~\textrm{is odd}.
\end{eqnarray*}
The result now follows from Theorem \ref{ker-phi-conjugacy-relation-trivial}.
\end{proof}

It is known from \cite{TAkita} that if $(\mathcal{W}, S)$ is a Coxeter system, then $\Env(\mathcal{D}(S^{\mathcal{W}}))$ is an intermediate group between $\mathcal{W}$ and the corresponding Artin group. Thus, the preceding corollary shows a contrast between Dehn quandles of Artin groups and that of Coxeter groups.
\par
Let $L$ be an oriented link in the 3-sphere, $D(L)$ its link diagram and $S=\{x_1, x_2, \ldots, x_n\}$ its set of labelled arcs. Then the link group $G(L)= \pi_1(\mathbb{S}^3 \setminus L)$ of $L$ is generated by $S$ and has defining relations at each crossing in $D(L)$ as shown in Figure \ref{fig1}. Thus, all relations in the Wirtinger presentation of $G(L)$ are conjugation relations.

\begin{figure}[hbt!]
\begin{subfigure}{0.4\textwidth}
\centering
\begin{tikzpicture}[scale=0.6]
\node at (0.4,-1.2) {{\small $x$}};
\node at (-1.2,0.4) {{\small $y$}};
\node at (1.1,1.2) {{\small $yxy^{-1}$}};
\begin{knot}[clip width=6, clip radius=4pt]
\strand[->] (-2,0)--(2,0);
\strand[->] (0,-2)--(0,2);
\end{knot}
\end{tikzpicture}
\caption{Positive crossing}
\end{subfigure}
\begin{subfigure}{0.4\textwidth}
\centering
\begin{tikzpicture}[scale=0.6]
\node at (1.1,1.2) {{\small $y^{-1}x y$}};
\node at (-1.2,0.4) {{\small $y$}};
\node at (0.4,-1.2) {{\small $x$}};
\begin{knot}[clip width=6, clip radius=4pt]
\strand[->] (2,0)--(-2,0);
\strand[->] (0,-2)--(0,2);
\end{knot}
\end{tikzpicture}
\caption{Negative crossing}
\end{subfigure}
\caption{Relations at a positive and at a negative crossing}
\label{fig1}
\end{figure}
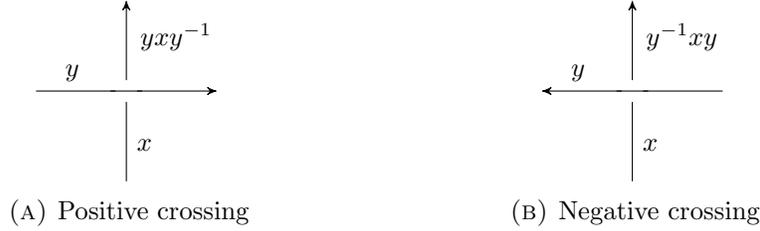

\begin{corollary}\label{ker-phi-link-group-trivial}
If $L$ is a link and $S$ a Wirtinger generating set for $G(L)$, then $$\Env(\dq(S^{G(L)})) \cong G(L).$$
\end{corollary}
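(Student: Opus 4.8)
The plan is to apply Theorem \ref{ker-phi-conjugacy-relation-trivial} directly, since the corollary is essentially a special case once one verifies the hypothesis on the presentation. The key observation is already supplied in the paragraph preceding the statement: the Wirtinger presentation of the link group $G(L)$ has $S = \{x_1, x_2, \ldots, x_n\}$ as its generating set, and every defining relation comes from a crossing in the diagram $D(L)$. As Figure \ref{fig1} records, at a positive crossing the relation reads $y x y^{-1} = z$ and at a negative crossing it reads $y^{-1} x y = z$, where $x, y, z \in S$ are the arcs meeting at that crossing. In either case the relation has exactly the form $w s w^{-1} = t$ with $s, t \in S$ and $w \in G(L)$ (taking $w = y$ or $w = y^{-1}$, which are elements of $S^{\pm 1} \subseteq G(L)$).

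Thus the first step is simply to record that the Wirtinger presentation $\langle S \mid R \rangle$ of $G(L)$ satisfies the hypothesis of Theorem \ref{ker-phi-conjugacy-relation-trivial}: every relation in $R$ is a conjugation relation $w s w^{-1} = t$ with $s, t \in S$. The second and final step is to invoke that theorem, which immediately yields $\Env(\dq(S^{G(L)})) \cong G(L)$.

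I do not anticipate any genuine obstacle here; the content of the corollary lies entirely in matching the shape of the Wirtinger relations to the hypothesis of the theorem, and this matching is transparent from Figure \ref{fig1}. The one small point worth stating explicitly is that each conjugating element $w$ appearing in a Wirtinger relation is a single generator (or its inverse), which is certainly an element of $G(L)$, so the hypothesis is met in its cleanest form. Consequently the proof is a one-line deduction, and I would write it as: \emph{Since every relation in the Wirtinger presentation of $G(L)$ is a conjugation relation of the form $w x_i w^{-1} = x_j$ with $x_i, x_j \in S$, the result follows from Theorem \ref{ker-phi-conjugacy-relation-trivial}.}
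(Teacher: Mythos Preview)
Your proposal is correct and matches the paper's approach exactly: the paper states this corollary immediately after observing that all Wirtinger relations are conjugation relations, so it follows directly from Theorem \ref{ker-phi-conjugacy-relation-trivial}.
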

\medskip

Let $ct(S^G)$ denote the number of conjugacy classes of elements of $G$ represented by torsion elements of $S$. Note that, if every element of $S$ is torsion, then $ct(S^G)=c(S^G)$.

\begin{proposition}\label{ker-phi-contain-ct}
If $G$ is a group generated by $S$, then $\mathbb{Z}^{ct(S^G)} \le \ker(\Phi)$. Further, the equality holds for Artin groups, Coxeter groups and link groups with their standard generating sets.
\end{proposition}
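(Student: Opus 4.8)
The plan is to prove the inclusion in complete generality and then check equality case by case, with the Coxeter case carrying essentially all of the content.

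For the general inclusion I would first choose representatives $t_1, \dots, t_r$ of the $r = ct(S^G)$ distinct conjugacy classes of torsion elements lying in $S$, and let $n_j$ be the (finite) order of $t_j$ in $G$. Since $\Phi(e_{t_j}^{n_j}) = t_j^{n_j} = 1$, each $e_{t_j}^{n_j}$ lies in $\ker(\Phi)$, and by Theorem \ref{central-extension} these elements are central in $\Env(\mathcal{D}(S^G))$. Hence the subgroup $K$ they generate is abelian on $r$ generators. I would pin down $K$ through the abelianisation: by Theorem \ref{gen-group-dehn-quandle}(ii) we have $\Env(\mathcal{D}(S^G))_{ab} \cong \mathbb{Z}^{c(S^G)}$ with one basis vector per conjugacy class, and $e_{t_j}^{n_j}$ maps to $n_j$ times the basis vector attached to the class of $t_j$. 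As the $t_j$ are pairwise non-conjugate, these images are linearly independent, so $K$ surjects onto $\mathbb{Z}^r$. An abelian group generated by $r$ elements that surjects onto $\mathbb{Z}^r$ is free abelian of rank $r$ (the Hopfian property of $\mathbb{Z}^r$), whence $K \cong \mathbb{Z}^{ct(S^G)} \le \ker(\Phi)$.

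For Artin and link groups equality is immediate: Corollaries \ref{ker-phi-artin-trivial} and \ref{ker-phi-link-group-trivial} give $\Env(\mathcal{D}(S^G)) \cong G$, so $\ker(\Phi)$ is trivial, while each standard generator maps to an element of infinite order in $G_{ab}$, so no generator is torsion and $ct(S^G) = 0$; both sides vanish. The Coxeter case is where the work lies. Here every generator $s_i$ has order $2$, so $ct(S^{\mathcal{W}}) = c(S^{\mathcal{W}})$. The first step is to produce a surjection $\pi \colon \Artin \to \Env(\mathcal{D}(S^{\mathcal{W}}))$ sending $s_i \mapsto e_{s_i}$; to see it is well defined I would rewrite each braid relation of $\mathcal{W}$ as a conjugation relation exactly as in the proof of Corollary \ref{ker-phi-artin-trivial} and observe that such relations hold automatically among the $e_{s_i}$ in the enveloping group, with surjectivity supplied by Theorem \ref{gen-group-dehn-quandle}(i). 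Since $\Phi \pi$ fixes each generator it is the canonical quotient $\Artin \to \mathcal{W}$, whose kernel is the normal closure of $\{s_i^2\}$; surjectivity of $\pi$ then gives $\ker(\Phi) = \pi(\langle\langle s_i^2 \rangle\rangle) = \langle\langle e_{s_i}^2 \rangle\rangle$. The decisive point is centrality: each $e_{s_i}^2$ lies in the central subgroup $\ker(\Phi)$, so this normal closure collapses to the ordinary abelian subgroup $\langle e_{s_i}^2 \rangle$, and whenever $s_i, s_j$ are conjugate in $\mathcal{W}$ one gets $e_{s_i}^2 = e_{s_j}^2$ by conjugating by the lift of the conjugating word and using centrality. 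Thus $\ker(\Phi)$ is abelian on exactly $c(S^{\mathcal{W}})$ generators and, as in the general inclusion, surjects onto $\mathbb{Z}^{c(S^{\mathcal{W}})}$ under abelianisation, forcing $\ker(\Phi) \cong \mathbb{Z}^{c(S^{\mathcal{W}})} = \mathbb{Z}^{ct(S^{\mathcal{W}})}$.

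The hard part will be justifying that $\pi$ is well defined, i.e.\ that the braid relations genuinely hold among the $e_{s_i}$: this is the bookkeeping of rewriting braid words as conjugation relations and checking they lift to $\Env(\mathcal{D}(S^{\mathcal{W}}))$, a computation of the same flavour as Corollary \ref{ker-phi-artin-trivial}. Once $\pi$ is in hand, centrality from Theorem \ref{central-extension} does the remaining work, both by trivialising the normal closure and by identifying the squares of conjugate generators.
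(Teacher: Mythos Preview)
Your proof is correct. The general inclusion and the Artin/link cases match the paper's approach exactly, though you are more careful than the paper in justifying that the central elements $e_{t_j}^{n_j}$ actually generate a \emph{free} abelian group of rank $ct(S^G)$; the paper simply asserts this, while you extract it cleanly from the abelianisation and the Hopfian property of $\mathbb{Z}^r$.

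The genuine difference is in the Coxeter case. The paper does not prove it at all: it simply cites Akita \cite[Theorem 3.1]{TAkita}. You instead supply a self-contained argument, building the surjection $\pi\colon \Artin \to \Env(\mathcal{D}(S^{\mathcal{W}}))$, identifying $\ker(\Phi)$ as $\pi$ of the normal closure of the squares, and then using centrality (Theorem \ref{central-extension}) twice---once to collapse the normal closure to an ordinary subgroup, and once to identify $e_{s_i}^2$ with $e_{s_j}^2$ for conjugate generators. This is essentially Akita's argument reproduced, so nothing is lost by citing him, but your version has the virtue of making the paper independent of that reference and of showing explicitly how the Coxeter computation is the same mechanism as Theorem \ref{ker-phi-conjugacy-relation-trivial} run in the presence of extra torsion relations. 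The only point to watch is that the well-definedness of $\pi$ needs the braid relations to hold in $\mathcal{W}$ (not just in $\Artin$) so that the corresponding quandle identities live in $\mathcal{D}(S^{\mathcal{W}})$; you acknowledge this, and the verification is indeed the same bookkeeping as in Corollary \ref{ker-phi-artin-trivial}.
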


\begin{proof}
Let $s \in S$ be a torsion element, say $s^n=1$. Then $\Phi(e_s^n)=s^n=1$, and hence $e_s^n \in \ker(\Phi)$. Since $\ker(\Phi)$ is central in $\Env(\mathcal{D}(S^G))$, it follows that $e_{s}^n=e_x e_{s}^n e_x^{-1}= (e_xe_{s} e_x^{-1})^n= e_{x s x^{-1}}^n$ for all $x \in \mathcal{D}(S^G)$. This implies that $\mathbb{Z}^{ct(S^G)} \le \ker(\Phi)$. 
\par
Since Artin generators of Artin groups are torsion free, the equality holds by Corollary \ref{ker-phi-artin-trivial}. For Coxeter groups, the equality is proved in \cite[Theorem 3.1]{TAkita}. Finally, since Wirtinger generators of link groups are torsion free, the equality holds by Corollary \ref{ker-phi-link-group-trivial}.
\end{proof}
\medskip

We note that Proposition \ref{ker-phi-contain-ct} extends a recent result of Akita \cite[Theorem 3.1]{TAkita} who considered enveloping groups of Coxeter quandles. Next, we obtain a presentation for the enveloping group of Dehn quandle of a surface group of genus more than one. Recall that, for a closed orientable surface $S_g$, we have $$\pi_1(S_g)=\langle a_1,b_1,\ldots a_g,b_g \mid \prod_{i=1}^{g}[a_i,b_i]=1 \rangle.$$ Let $S= \{ a_1,b_1,\ldots a_g,b_g\}$. 

\begin{theorem}
If $g \ge 2$, then
$$\Env(\dq(S^{\pi_1(S_g)})) \cong \langle t,a_1,b_1,\ldots, a_g,b_g \mid [t,a_i] =[t,b_i]=1, \quad \prod_{i=1}^{g}[a_i,b_i]=t \rangle.$$
\end{theorem}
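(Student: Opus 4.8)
The plan is to show that the enveloping group of the Dehn quandle of the surface group $\pi_1(S_g)$ with respect to its standard generating set $S$ has exactly the asserted presentation. The key structural input is Theorem \ref{central-extension}: since $S$ generates $\pi_1(S_g)$, we have a central extension
\begin{equation*}
1 \to \ker(\Phi) \to \Env(\dq(S^{\pi_1(S_g)})) \xrightarrow{\Phi} \pi_1(S_g) \to 1,
\end{equation*}
and $\Env(\dq(S^{\pi_1(S_g)}))$ is generated by $\{e_{a_1},e_{b_1},\ldots,e_{a_g},e_{b_g}\}$ by Theorem \ref{gen-group-dehn-quandle}(i). Since each $a_i,b_i$ is a single conjugacy class generator and $\pi_1(S_g)_{ab}\cong \mathbb{Z}^{2g}$ is torsion free with all generators non-conjugate, Theorem \ref{gen-group-dehn-quandle}(ii) gives $\Env(\dq(S^{\pi_1(S_g)}))_{ab}\cong \mathbb{Z}^{2g}$ (here $c(S^{\pi_1(S_g)})=2g$, as no two standard generators are conjugate for $g\ge 2$). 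This already tells me that $\ker(\Phi)$ lies in the commutator subgroup and that the single surface relation is the only source of kernel.

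First I would write $\Gamma$ for the group defined by the target presentation $\langle t,a_1,b_1,\ldots,a_g,b_g \mid [t,a_i]=[t,b_i]=1,\ \prod_{i=1}^g[a_i,b_i]=t\rangle$ and construct a homomorphism $\Psi: \Env(\dq(S^{\pi_1(S_g)})) \to \Gamma$ by sending each $e_{a_i}\mapsto a_i$ and $e_{b_i}\mapsto b_i$. To see this is well defined I must verify that the defining relations $e_{x*y}=e_ye_xe_y^{-1}$ of the enveloping group are respected; since $S$ generates and every element of $\dq(S^{\pi_1(S_g)})$ is a conjugate of some generator, it suffices to check the relations are consistent, which reduces to checking that the images of the quandle relations hold in $\Gamma$. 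Conversely I would build a homomorphism $\Theta: \Gamma \to \Env(\dq(S^{\pi_1(S_g)}))$ by $a_i\mapsto e_{a_i}$, $b_i\mapsto e_{b_i}$, and $t\mapsto \prod_{i=1}^g[e_{a_i},e_{b_i}]$; the central and product relations of $\Gamma$ must be verified to hold among these images. The centrality of $\prod_{i=1}^g[e_{a_i},e_{b_i}]$ in the enveloping group is exactly the content of the computation in Theorem \ref{central-extension} showing $\ker(\Phi)$ is central, since $\Phi\!\left(\prod_{i=1}^g[e_{a_i},e_{b_i}]\right)=\prod_{i=1}^g[a_i,b_i]=1$ places it in $\ker(\Phi)$.

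Once both maps are constructed, I would check that $\Theta$ and $\Psi$ are mutually inverse on generators, which identifies the two groups. The cleaner route is to observe that $\Gamma$ is itself a central extension of $\pi_1(S_g)$ by the infinite cyclic group generated by $t$ (killing $t$ in $\Gamma$ recovers the standard surface presentation), fitting into $1\to \langle t\rangle \to \Gamma \to \pi_1(S_g)\to 1$, and then to show $\Psi$ and $\Phi$ are compatible with the two central extensions so that $\Psi$ induces an isomorphism on the kernels and the identity on the quotient $\pi_1(S_g)$; the five lemma then yields the isomorphism. For this I need that $\ker(\Phi)$ is exactly infinite cyclic generated by $\prod_{i=1}^g[e_{a_i},e_{b_i}]$.

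The main obstacle, and the step requiring genuine care, is proving that $\ker(\Phi)\cong \mathbb{Z}$ generated by that single commutator product, rather than something larger; equivalently, that no further relations are forced in the enveloping group. Here the hypothesis $g\ge 2$ is essential: one needs $\pi_1(S_g)$ to be a one-relator group whose single relator is a product of commutators so that $\mathrm{H}_2(\pi_1(S_g),\mathbb{Z})\cong \mathbb{Z}$, and then to argue via the five-term exact sequence in homology for the central extension that $\ker(\Phi)$ injects appropriately and is generated by the image of the Schur multiplier class. Concretely, I expect the argument to run through showing that the element $\prod_{i=1}^g[e_{a_i},e_{b_i}]$ has infinite order (it surjects onto an infinite-order element under $\Theta$ composed with the map to the abelianization of $\Gamma$, or is detected by the non-splitting in Theorem \ref{central-extension} since $\pi_1(S_g)_{ab}$ is torsion free but $\Env(\dq(S^{\pi_1(S_g)}))_{ab}\cong \mathbb{Z}^{2g}$ forces $\ker(\Phi)$ into the commutator subgroup), and that it generates the whole kernel because any additional kernel element, pushed into $\Gamma$ via $\Psi$ and back, would have to already be a power of $t$.
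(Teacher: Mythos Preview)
Your overall strategy---build mutually inverse homomorphisms $\Psi:\Env\to\Gamma$ and $\Theta:\Gamma\to\Env$---is more direct than the paper's route, and $\Theta$ is fine (centrality of $\prod_i[e_{a_i},e_{b_i}]$ follows from Theorem~\ref{central-extension}, and once both maps exist the mutual-inverse check on generators is immediate). The real content, however, is the \emph{existence} of $\Psi$, and here there is a genuine gap. Saying it ``reduces to checking that the images of the quandle relations hold in $\Gamma$'' is not enough: the defining relations of $\Env$ involve all pairs $x,y\in\dq(S^{\pi_1(S_g)})$, not just the generators, so to define $\Psi$ you must produce a quandle homomorphism $\dq(S^{\pi_1(S_g)})\to\Conj(\Gamma)$ sending each $s\in S$ to $s\in\Gamma$. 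Since $\langle t\rangle$ is central, the natural candidate sends $wsw^{-1}\mapsto \tilde w\, s\, \tilde w^{-1}$ for any lift $\tilde w$ of $w$; but well-definedness requires that whenever $w_1sw_1^{-1}=w_2sw_2^{-1}$ in $\pi_1(S_g)$ the lifts agree in $\Gamma$. This comes down to knowing that the centralizer of each generator $s$ in $\pi_1(S_g)$ is $\langle s\rangle$, so that $w_2^{-1}w_1\in\langle s\rangle$ lifts to an element commuting with $s$ in $\Gamma$. That fact uses $g\ge 2$ (surface groups are then hyperbolic, hence have cyclic centralizers) and is precisely the ingredient you have not supplied.

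Your fallback arguments in the final paragraph also fail as written: $t$ vanishes in $\Gamma_{ab}$ since $t=\prod_i[a_i,b_i]$, so abelianization cannot detect its order; and the non-splitting clause of Theorem~\ref{central-extension} applies only when $G_{ab}$ has torsion, which is false here. The paper takes a different path altogether: it uses the five-term exact sequence together with $\mathrm{H}_2(\pi_1(S_g),\mathbb{Z})\cong\mathbb{Z}$ and Theorem~\ref{gen-group-dehn-quandle}(ii) to force $\ker\Phi$ to be cyclic, then runs a case analysis (trivial, finite cyclic, infinite cyclic) using a presentation lemma of Labru\`ere--Paris and the absence of rank-two abelian subgroups in $\pi_1(S_g)$ to eliminate the first two cases, and finally pins down the exponent in the surviving presentation by comparing abelianizations. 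Both approaches ultimately lean on the same hyperbolic-group fact about $\pi_1(S_g)$; yours would be shorter once the well-definedness of $\Psi$ is properly established.
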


\begin{proof}
By Theorem \ref{central-extension}, we have the central extension $$1\to \ker(\Phi) \to E \stackrel{\Phi}{\to} \pi_1(S_g)\to 1,$$
where $E:=\Env(\mathcal{D}(S^{\pi_1(S_g)}))$. By \cite[Chapter VII, Corollary 6.4]{Brown1982}, we get the exact sequence 
\begin{equation}\label{cohomology-seq-surface-group}
\cdots \to \mathrm{H}_2(\pi_1(S_g), \mathbb{Z})\to \mathrm{H}_1(\ker(\Phi), \mathbb{Z})\to \mathrm{H}_1(E, \mathbb{Z})\mapsto \mathrm{H}_1(\pi_1(S_g), \mathbb{Z})\to 0.
\end{equation}
 Note that $\mathrm{H}_1(\pi_1(S_g), \mathbb{Z}) \cong \mathbb{Z}^{2g}$ and $\mathrm{H}_2(\pi_1(S_g), \mathbb{Z}) \cong \mathbb{Z}$. Further, by Theorem \ref{gen-group-dehn-quandle}(ii), we have $\mathrm{H}_1(E, \mathbb{Z}) \cong \mathbb{Z}^{2g}$. Thus, the exact sequence \eqref{cohomology-seq-surface-group} takes the form $$\cdots\to \mathbb{Z}\to \ker(\Phi) \to \mathbb{Z}^{2g}\to \mathbb{Z}^{2g}\to 0.$$ Since every surjective homomorphism from a free abelian group to itself is an isomorphism, we deduce that $\ker(\Phi)$ is a cyclic group. Now, we have the following cases.
\begin{enumerate}[(1)]
\item $\ker(\Phi)$ is trivial: In this case, $E\cong \pi_1(S_g)$. Since each relation in $E$ can be written as a conjugation relation between the generators, it follows that the relation $\prod_{i=1}^{2g}[a_i,b_i]=1$ can also be written as $w x w^{-1}=y$ for some $x, y \in S$ and $w\in \pi_1(S_g)$. If $x \neq y$, then rank of $\mathrm{H}_1(E, \mathbb{Z})$ is less than $2g$, a contradiction. If $x=y$ and $w$ is non-trivial, then we have a free abelian group of rank two $\langle x, w\rangle$ inside $\pi_1(S_g)$, again a contradiction, since $\pi_1(S_g)$ is a Fuchsian group. Hence, this case does not arise.
\item  $\ker(\Phi)=\langle t \mid t^n=1\rangle$ for $n>1$: By \cite[Lemma 2.5]{Labruere-Paris2001}, the group $E$ has a presentation
$$E\cong \langle t, a_1,b_1,\ldots, a_g,b_g \mid t^n=[t,a_i]=[t,b_i]=1,\quad \prod_{i=1}^{g}[a_i,b_i]=t^k \rangle,$$
for some integer $k$ with $1 \le k \le n$. If $\gcd(k, n) \neq 1$, then the abelianization of $E$ will have a torsion element of order $\gcd(k, n)$, which is a contradiction. Thus, we can assume that $\gcd(k, n) =1$. We claim that in this case the relation $t^n=1$ cannot be written as a conjugation relation. Suppose that we can do so, that is, there exist $x, y \in S$ and non-trivial $w\in E$ such that $w x w^{-1}=y$. If $x, y$ are distinct, then $\mathrm{H}_1(E, \mathbb{Z})$ has rank less then $2g$, which is contradiction. If $x=y$ and $w$ is non-trivial, then the group $\langle x,w\rangle$ is abelian, and hence $\Phi(\langle x,w\rangle)$ is an abelian subgroup of $\pi_1(S_g)$. Thus, $\Phi(\langle x,w\rangle)$ must be an infinite cyclic group, and $\Phi(w)$ is a power of $\Phi(x)$. This implies that $w\in\langle t,x\rangle$, and hence the relation $t^n=1$ can be recovered from the relation $[t,x]=1$. Thus, the relation $t^n=1$ can be removed from the presentation of $E$, in which case $t$ will be of infinite order, a contradiction. Hence, this case does not arise.
\item  $\ker(\Phi)= \langle t \rangle$: Again, by \cite[Lemma 2.5]{Labruere-Paris2001}, a presentation of $E$ is 
$$E\cong \langle t,a_1,b_1,\ldots, a_g,b_g \mid [t,a_i]=[t,b_i]=1,\quad \prod_{i=1}^{g}[a_i,b_i]=t^k\rangle$$ 
for some integer $k$ with $|k| \ge0$. The case $k=0$ is not possible as it gives rank of $\mathrm{H}_1(E, \mathbb{Z})$ to be $2g+1$, a contradiction. Further, $|k| \ge 2$ gives torsion in $\mathrm{H}_1(E, \mathbb{Z})$, again a contradiction. Hence, the only possibility is 
$$E \cong \langle t,a_1,b_1,\ldots, a_g,b_g \mid [t,a_i]=[t,b_i]=1,\quad \prod_{i=1}^{g}[a_i,b_i]=t^\epsilon \rangle,$$ 
where $\epsilon= \pm 1$. But, the groups for both the choices of $\epsilon$ are isomorphic, which is desired.
\end{enumerate}
\end{proof}
\par

\subsection{Automorphisms of Dehn quandles of groups}
We conclude this section with some observations on automorphisms of Dehn quandles. The following result \cite[Theorem 3.1]{Nosaka2017} is motivated by the construction of an augmented quandle \cite{Joyce1979}.

\begin{theorem}\label{action-on-quandle} 
Let $G$ be a group admitting a right action on a quandle $X$ and $\kappa: X \to G$ is a map such that $\kappa(X)$ generates $G$. 
\begin{enumerate}[(i)]
\item If $x*y=x \cdot \kappa(y)$ for all $x, y \in X$, then the action induces a surjective group homomorphism  $G \to \Inn(X)$.
\item If the action is effective (faithful), then $G \cong \Inn(X)$ and the action of $G$ on $X$ agrees with the natural $\Inn(X)$ action on $X$.
\end{enumerate}
\end{theorem}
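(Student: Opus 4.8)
The plan is to turn the $G$-action into a map into the permutation group of $X$ and to show that the hypothesis on $\kappa$ pins its image down to $\Inn(X)$. For $g \in G$ write $\rho_g \colon X \to X$, $\rho_g(x) = x \cdot g$, for the permutation induced by the action. The action axioms yield $\rho_{gh} = \rho_h \circ \rho_g$ and $\rho_{g^{-1}} = \rho_g^{-1}$, so $g \mapsto \rho_g$ is an (anti-)homomorphism into $\operatorname{Sym}(X)$; the handedness of the composition is irrelevant for computing the image and can be absorbed at the end by replacing $g$ with $g^{-1}$ or by passing to the opposite group, so I would fix this convention once and not dwell on it. The real content is then to identify the image.

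The key observation is that the identity $x * y = x \cdot \kappa(y)$ is nothing but $\rho_{\kappa(y)} = S_y$ for every $y \in X$, where $S_y(x) = x * y$ is the inner automorphism determined by $y$. The step I expect to carry the weight is verifying that $\rho_g$ is a \emph{quandle} automorphism for all $g$, and not merely a set bijection: since $\kappa(X)$ generates $G$, every $g$ can be written as $\kappa(y_1)^{\delta_1} \cdots \kappa(y_m)^{\delta_m}$, whence $\rho_g$ is a composite of the maps $S_{y_i}^{\pm 1}$ and therefore lies in $\Inn(X)$. This is precisely where the generation hypothesis is indispensable, since a priori $G$ is only assumed to act on the underlying set of $X$. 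Thus $\rho(G) \subseteq \Inn(X)$, while conversely each generator $S_y = \rho_{\kappa(y)}$ of $\Inn(X)$ already lies in the image; hence the corestriction $G \to \Inn(X)$ is a surjective group homomorphism, which is (i).

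For (ii), effectiveness of the action says exactly that $\rho_g = \id_X$ forces $g = e$, i.e. $\rho$ has trivial kernel; together with the surjectivity from (i) this upgrades $\rho$ to an isomorphism $G \cong \Inn(X)$. It then remains to check that the two actions coincide, which is a direct unwinding of definitions: under $g \mapsto \rho_g$ the natural evaluation action of $\Inn(X)$ on $X$ sends $x$ to $\rho_g(x) = x \cdot g$, exactly the original $G$-action. The only points needing care in a full write-up are the composition convention flagged above and the routine verification that $g \mapsto \rho_g$ respects the group operations, both of which follow formally from the action axioms.
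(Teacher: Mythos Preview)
Your argument is correct and is the standard proof of this fact. Note, however, that the paper does not supply its own proof of this theorem: it is quoted verbatim as \cite[Theorem 3.1]{Nosaka2017} and used as a black box, so there is nothing in the paper to compare your proof against beyond the statement itself. Your write-up would serve perfectly well as the omitted proof; the only cosmetic point is the handedness issue you already flagged, which is most cleanly resolved by regarding $\Inn(X)$ as acting on $X$ on the right via $x \cdot S_y = S_y(x) = x*y$, so that $g \mapsto \rho_g$ is a genuine homomorphism of right-acting groups rather than an anti-homomorphism to be fixed after the fact.
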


As a consequence, we obtain.

\begin{proposition}\label{action-corollary}
Let $G$ be a group and $X$ be a subquandle of $\Conj(G)$ such that $X$ generates $G$. Then $\Inn(X) \cong G/\Z(G) \cong \Inn(\Conj(G))$. 
\end{proposition}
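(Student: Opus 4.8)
The plan is to realise $\Inn(X)$ as a quotient of $G$ by applying Theorem \ref{action-on-quandle} to the conjugation action of $G$ on $X$. First I would check that $G$ really acts on $X$ from the right via $x \cdot g = g^{-1} x g$. The one thing to verify is that $X$ is invariant under conjugation by every element of $G$: since $X$ is a subquandle of $\Conj(G)$, for all $x,y \in X$ we have $y x y^{-1} = x*y \in X$ and $y^{-1} x y = x*^{-1}y \in X$, so conjugation by $y$ and by $y^{-1}$ preserves $X$ for each $y \in X$; as $X$ generates $G$, conjugation by an arbitrary word in $X^{\pm1}$ also preserves $X$, which gives the required well-defined right action.

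Next I would set $\kappa : X \to G$ to be $\kappa(y) = y^{-1}$. Then $\kappa(X) = X^{-1}$ still generates $G$, and $x \cdot \kappa(y) = y x y^{-1} = x*y$, so the hypotheses of Theorem \ref{action-on-quandle}(i) are met and the action induces a surjective homomorphism $G \twoheadrightarrow \Inn(X)$ sending $g$ to the automorphism $x \mapsto g^{-1} x g$. Its kernel consists of exactly those $g$ commuting with every element of $X$; because $X$ generates $G$, this is precisely $\Z(G)$, whence $G/\Z(G) \cong \Inn(X)$. (Equivalently, the conjugation action descends to an effective action of $G/\Z(G)$ on $X$ and $\kappa$ descends to a generating map, so Theorem \ref{action-on-quandle}(ii) yields $G/\Z(G) \cong \Inn(X)$ directly.)

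Finally, the set underlying $\Conj(G)$ is all of $G$, so $\Conj(G)$ is a subquandle of itself that generates $G$; applying the isomorphism just established with $X = \Conj(G)$ gives $\Inn(\Conj(G)) \cong G/\Z(G)$, which completes both halves of the statement. The only step requiring genuine care is the invariance of $X$ under all of $G$ that makes the right action well defined; after that, everything reduces to a direct invocation of Theorem \ref{action-on-quandle} together with an elementary centraliser computation, so I expect no substantial obstacle.
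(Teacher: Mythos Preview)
Your proof is correct and follows essentially the same route as the paper: verify that conjugation gives a $G$-action on $X$, invoke Theorem~\ref{action-on-quandle} to obtain a surjection $G\twoheadrightarrow\Inn(X)$, identify the kernel as $\Z(G)$ since $X$ generates $G$, and then specialise to $X=\Conj(G)$ for the second isomorphism. The only cosmetic difference is that the paper takes $\kappa$ to be the inclusion $X\hookrightarrow G$, whereas you take $\kappa(y)=y^{-1}$ to make the compatibility $x\cdot\kappa(y)=x*y$ hold with the standard right action $x\cdot g=g^{-1}xg$; this is a harmless convention adjustment and your version is arguably the more careful of the two.
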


\begin{proof}
Since $X$ generates the group $G$, each element $g \in G$ can be written as $g=x_1^{\epsilon_1}x_2^{\epsilon_2}\cdots x_n^{\epsilon_n}$ for $x_i \in X$ and $\epsilon_i= \pm 1$. Further, since $X$ is a subquandle of $\Conj(G)$, the usual conjugation action of $G$ on itself keeps $X$ invariant, and hence induces an action on $X$. Let $\kappa: X \to G$ be the inclusion map. By Theorem \ref{action-on-quandle}(i), there is a surjective group homomorphism $f: G \to \Inn(X)$. But, $\ker(f)=\Z(G)$, and hence $G/\Z(G) \cong \Inn(X)$. On the other hand, for any group $G$, we have $\Inn(\Conj(G))\cong G/\Z(G)$.
\end{proof}

\begin{corollary}\label{cor-single-conj-class}
Let $G$ be a group generated by the conjugacy class $X$ of an element of $G$. Then the following hold:
\begin{enumerate}[(i)]
\item $\Inn(\Conj(G))\cong G/\Z(G) \cong \Inn(X)$. 
\item $\Aut_X(G) \le \Aut (X)$, where $\Aut_X(G)$ consists of group automorphisms of $G$ that preserve $X$ and $\Aut(X)$ is the group of quandle automorphisms of $X$.
\item $X$ is a connected quandle.
\end{enumerate}
\end{corollary}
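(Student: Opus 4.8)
The plan is to derive all three parts from the single hypothesis that the conjugacy class $X$ generates $G$. Part (i) is immediate: since $X$ is a conjugacy class it is closed under conjugation, hence a subquandle of $\Conj(G)$, and by assumption it generates $G$. Proposition \ref{action-corollary} then applies verbatim and yields $\Inn(X) \cong G/\Z(G) \cong \Inn(\Conj(G))$, so (i) needs no further argument.

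For (ii), I would consider the restriction map $\rho \colon \Aut_X(G) \to \Map(X,X)$ sending $\phi$ to $\phi|_X$. If $\phi$ is a group automorphism preserving $X$, then for $x,y \in X$ we have $\phi(x*y) = \phi(yxy^{-1}) = \phi(y)\phi(x)\phi(y)^{-1} = \phi(x)*\phi(y)$, so $\phi|_X$ respects the quandle operation; being the restriction of a bijection carrying $X$ onto $X$, it lies in $\Aut(X)$. Clearly $\rho$ is a group homomorphism, and it is injective because $\phi|_X = \id_X$ forces $\phi$ to fix the generating set $X$ and hence to equal $\id_G$. Thus $\rho$ realises $\Aut_X(G)$ as a subgroup of $\Aut(X)$.

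For (iii), it suffices to show that $\Inn(X)$ acts transitively on $X$. Given $x, x' \in X$, they are conjugate in $G$, say $x' = gxg^{-1}$; writing $g = x_1^{\epsilon_1}\cdots x_n^{\epsilon_n}$ with $x_i \in X$ and $\epsilon_i \in \{1,-1\}$---possible since $X$ generates $G$---and rewriting exactly as in the proof of Proposition \ref{generators-dehn-quandle-general}, we obtain $x' = x *^{\epsilon_n} x_n *^{\epsilon_{n-1}} \cdots *^{\epsilon_1} x_1$, which places $x'$ in the $\Inn(X)$-orbit of $x$. Hence $X$ is a single orbit, that is, connected.

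None of the three parts is genuinely difficult; the unifying tool, and the only point demanding any care, is the translation of a group-theoretic conjugation into the left-associated quandle product, which is valid precisely because $X$ generates $G$. I therefore expect the corollary to amount to packaging Proposition \ref{action-corollary} together with the rewriting step from Proposition \ref{generators-dehn-quandle-general}, with the generation hypothesis doing all the real work (including the injectivity needed in (ii)).
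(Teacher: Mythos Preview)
Your proposal is correct and follows essentially the same approach as the paper: part (i) is deduced from Proposition~\ref{action-corollary} exactly as you do, and part (ii) is the restriction argument you spell out (the paper states it in a single line, but your added verification that $\rho$ is injective because $X$ generates $G$ is the right justification). For (iii) there is a minor cosmetic difference: the paper observes that conjugation by $G$, hence by $G/\Z(G)$, is transitive on the conjugacy class $X$ and then invokes the isomorphism $\Inn(X)\cong G/\Z(G)$ from (i), whereas you argue directly by rewriting $gxg^{-1}$ as a left-associated quandle product; both arguments amount to the same computation and neither is deeper than the other.
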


\begin{proof}
Assertion (i) follows from Proposition \ref{action-corollary} once we notice that $X$ is a subquandle of $\Conj(G)$. Assertion (ii) follows since any $f \in \Aut_X(G)$ restricts to a quandle automorphism of $X$. Note that the conjugation action of $G$ and hence that of $G/\Z(G)$ on $X$ is transitive. Hence, by (i), $\Inn(X)$ acts transitively on $X$.
\end{proof}

In view of Proposition \ref{action-corollary}, we have 
\begin{corollary}
If $G$ is a group generated by $S$, then $\Inn(\mathcal{D}(S^G))\cong G/\Z(G).$
\end{corollary}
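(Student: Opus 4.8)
The plan is to recognise this as an immediate specialisation of Proposition \ref{action-corollary}, whose conclusion is precisely $\Inn(X) \cong G/\Z(G)$ for any subquandle $X$ of $\Conj(G)$ that generates $G$. So the whole task reduces to checking that $X := \mathcal{D}(S^G)$ satisfies the two hypotheses of that proposition, after which I would simply invoke it.

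First I would observe that $\mathcal{D}(S^G)$ is a subquandle of $\Conj(G)$: by construction it is the set $S^G$ of conjugates of elements of $S$ equipped with the conjugation operation $x*y = yxy^{-1}$, which is exactly the restriction of the operation on $\Conj(G)$. This is noted right after the definition of Dehn quandles and requires no further argument. Next I would verify the generation hypothesis, namely that $\mathcal{D}(S^G)$ generates $G$ as a group. For this it suffices to note that each $s \in S$ is its own conjugate (conjugating by the identity), so $S \subseteq S^G = \mathcal{D}(S^G)$; since $S$ already generates $G$ by assumption, so does the larger set $\mathcal{D}(S^G)$.

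With both hypotheses in hand, Proposition \ref{action-corollary} applies with $X = \mathcal{D}(S^G)$ and yields directly that $\Inn(\mathcal{D}(S^G)) \cong G/\Z(G)$, which is the desired conclusion. I do not anticipate any genuine obstacle here: the statement is a routine corollary, and the only point that even warrants a sentence is the inclusion $S \subseteq \mathcal{D}(S^G)$ guaranteeing that the Dehn quandle generates $G$. One could alternatively phrase the argument through Corollary \ref{cor-single-conj-class}(i) when $S$ is a single conjugacy class, but the general case is cleanest via Proposition \ref{action-corollary} as above.
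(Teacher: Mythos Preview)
Your proposal is correct and matches the paper's approach exactly: the paper simply states the corollary ``in view of Proposition \ref{action-corollary}'' without further argument, and you have spelled out precisely the two routine verifications (that $\mathcal{D}(S^G)$ is a subquandle of $\Conj(G)$ and that it generates $G$ since $S \subseteq S^G$) needed to invoke that proposition.
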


\begin{remark}
Computing $\Aut(\mathcal{D}(A^G))$ seems challenging in general. Recall that the Dehn quandle of a free group with respect to its free generating set is the free quandle on that set. A presentation of the automorphism group of the free quandle of rank $n$ is known \cite{FennRourke1997}, where it is shown to be isomorphic to the welded braid group on $n$ strands.
\end{remark}
\medskip

\section{Orderability of Dehn quandles of groups}\label{section orderable dehn quandles}
A quandle $Q$ is said to be \textit{left-orderable} if there is a (strict) linear order $<$ on $Q$ such that $x<y$ implies $z*x<z*y$ for all $x,y,z\in Q$. Similarly, a quandle $Q$ is \textit{right-orderable} if there is a (strict) linear order $<^\prime$ on $Q$ such that $x<^\prime y$ implies $x*z<^\prime y*z$ for all $x,y,z\in Q$. A quandle is \textit{bi-orderable} if it has a (strict) linear order with respect to which it is both left and right ordered. Orderability of groups is defined analogously. Orderability of quandles, particularly of link quandles, has been considered in detail in a recent work \cite{RaundalSinghSingh2020}. For Dehn quandles of groups, we have the following result.

 \begin{proposition}\label{dehn-quandle-not-ordererable}
Let $G$ be a group and $A$ its subset containing two distinct elements $x, y$ such that $xyx=yxy$. Then the Dehn quandle $\mathcal{D}(A^G)$ is neither right nor left orderable.
\end{proposition}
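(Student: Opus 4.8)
The plan is to extract from the single group relation $xyx = yxy$ a small collection of quandle identities among $x$, $y$ and their products, and then to run two short chains of inequalities---one for each type of order---each of which collapses to the contradiction $x < y < x$.

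First I would record the quandle identities forced by the braid relation. Writing $a*b = bab^{-1}$, a direct conjugation computation in $G$ using $xyx = yxy$ yields the left-associated identities $(x*y)*x = y$ and $(y*x)*y = x$, together with their right-associated companions $x*(y*x) = y$ and $y*(x*y) = x$ (for instance $(x*y)*x = x(yxy^{-1})x^{-1} = xyxy^{-1}x^{-1} = (yxy)y^{-1}x^{-1} = y$). All of $x$, $y$, $x*y$, $y*x$ lie in $\mathcal{D}(A^G)$, since it is a subquandle of $\Conj(G)$, so these relations make sense inside the quandle. Because the relation $xyx = yxy$ is symmetric under interchanging $x$ and $y$, and any linear order satisfies exactly one of $x<y$ or $y<x$ (as $x \neq y$), I may assume without loss of generality that $x < y$.

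For the right-orderable case I would use that each right translation $w \mapsto w*z$ preserves the order. Applying $*\,x$ to $x<y$ gives $x = x*x < y*x$; applying $*\,y$ to this gives $x*y < (y*x)*y = x$; and applying $*\,x$ once more gives $y = (x*y)*x < x*x = x$, contradicting $x<y$. For the left-orderable case I would symmetrically use that each left translation $w \mapsto z*w$ preserves the order. Applying $x*(-)$ to $x<y$ gives $x = x*x < x*y$; applying $y*(-)$ gives $y*x < y*(x*y) = x$; and applying $x*(-)$ again gives $y = x*(y*x) < x*x = x$, again contradicting $x<y$. Hence no linear order of either type can exist, so $\mathcal{D}(A^G)$ is neither right nor left orderable.

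The only genuinely computational part is verifying the four quandle identities, which is routine conjugation algebra driven entirely by the braid relation. The main point to get right is matching each orderability axiom to the correct family of translations---right translations for the right order, left translations for the left order---and keeping track of the parenthesization, since the right-orderable chain uses the left-associated identities while the left-orderable chain uses the right-associated ones.
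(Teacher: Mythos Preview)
Your proof is correct and follows essentially the same route as the paper's: rewrite the braid relation as quandle identities, then chain three order-preserving translations to reach $y<x$ from $x<y$. Your right-order chain is identical to the paper's; for the left-order case you apply $x*(-),\,y*(-),\,x*(-)$ where the paper applies $y*(-),\,x*(-),\,y*(-)$, but both rely on the same right-associated identities $x*(y*x)=y$ and $y*(x*y)=x$, and your explicit verification of these makes the left-order argument slightly cleaner than the paper's somewhat terse version.
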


\begin{proof}
Notice that the braid relation $xyx=yxy$ can be written in $\mathcal{D}(A^G)$ in the form $x*y*x=y$ and $y*x*y=x$. Assume that there exists a right order $<$ on $\mathcal{D}(A^G)$. Without loss of generality, we can assume that $x<y$. By right orderability, we have $ x<y*x \implies x*y<y*x*y \implies x*y<x\implies x*y*x<x \implies y<x$, which is a contradiction. Similarly, suppose that we have a left order $<$ such that $x<y$. Then $y*x<y\implies x*y*x<x*y \implies y<x*y \implies y<y*x*y \implies y<x$, again a contradiction. Hence, $\mathcal{D}(A^G)$ is neither right nor left orderable.
\end{proof}

\begin{corollary}\label{group-with-braid-relation-not-ordererable}
Let $G$ be a group containing two distinct elements $x, y$ such that $xyx=yxy$. Then $G$ is not a bi-orderable group.
\end{corollary}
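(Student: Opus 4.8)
The plan is to deduce this directly from Proposition \ref{dehn-quandle-not-ordererable} by showing that bi-orderability of the group forces right-orderability of an associated Dehn quandle, which then contradicts the proposition. First I would set $A = \{x, y\} \subseteq G$ and consider the Dehn quandle $\mathcal{D}(A^G)$, a subquandle of $\Conj(G)$ whose operation is $a * b = bab^{-1}$.

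The key observation I would isolate is that in any bi-ordered group, conjugation by a fixed element is order-preserving. Indeed, if $<$ is a bi-invariant linear order on $G$ and $u < v$, then left-multiplying by $z$ gives $zu < zv$, and right-multiplying by $z^{-1}$ gives $zuz^{-1} < zvz^{-1}$. Hence $u < v$ implies $zuz^{-1} < zvz^{-1}$ for every $z \in G$.

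Next I would restrict the order $<$ from $G$ to the subset $\mathcal{D}(A^G)$, which is again a strict linear order. Comparing the conjugation statement above with the definition of right-orderability of a quandle, namely $a < b \Rightarrow a * z < b * z$ where $a * z = zaz^{-1}$, I see that the two conditions coincide. Thus the restricted order makes $\mathcal{D}(A^G)$ a right-orderable quandle. But $x$ and $y$ are distinct elements of $A$ satisfying $xyx = yxy$, so Proposition \ref{dehn-quandle-not-ordererable} asserts that $\mathcal{D}(A^G)$ is \emph{neither} right nor left orderable. This contradiction shows that $G$ admits no bi-invariant order.

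I do not anticipate a genuine obstacle, since the argument is a short reduction. The only point requiring care is the elementary verification that two-sided invariance of the group order translates precisely into conjugation-invariance, and that this conjugation-invariance is exactly the right-orderability condition for the Dehn quandle under the operation $a*z = zaz^{-1}$; getting this matching right (rather than accidentally comparing against the left-orderability condition $x < y \Rightarrow xzx^{-1} < yzy^{-1}$, which does \emph{not} follow from bi-invariance alone) is the crux of the proof.
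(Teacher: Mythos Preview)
Your proof is correct and follows essentially the same route as the paper's: set $A=\{x,y\}$, invoke Proposition \ref{dehn-quandle-not-ordererable} to see that $\mathcal{D}(A^G)$ is not right orderable, and derive a contradiction from the fact that a bi-order on $G$ restricts to a right order on any subquandle of $\Conj(G)$. The only difference is that the paper cites \cite[Proposition 3.4]{BardakovPassiSingh2020} for this last fact, whereas you supply the (short) direct verification that bi-invariance of $<$ gives $u<v \Rightarrow zuz^{-1}<zvz^{-1}$.
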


\begin{proof}
Let us set $A= \{x, y\}$. Then, by Proposition \ref{dehn-quandle-not-ordererable}, the quandle $\mathcal{D}(A^G)$ is not right orderable. By \cite[Proposition 3.4]{MR4450681}, if a group is bi-orderable, then its conjugation quandle is right orderable. Since $\mathcal{D}(A^G)$ is a subquandle of $\Conj(G)$, it follows that $G$ cannot be a bi-orderable group.
\end{proof}

The following corollary recovers the known result about failure of bi-orderability of spherical Artin groups \cite[Theorem 5.8.]{MulhollandRolfsen}, except for $I_2(2m)$ with $m \ge 3$.

\begin{corollary}\label{artin not bi-order} 
Artin groups with an odd exponent are not bi-orderable.
\end{corollary}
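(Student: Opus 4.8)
The plan is to derive the corollary from Corollary \ref{group-with-braid-relation-not-ordererable}, so that the whole problem reduces to exhibiting two distinct elements $x,y$ of the Artin group with $xyx=yxy$; equivalently, by Proposition \ref{dehn-quandle-not-ordererable}, a two-element subset witnessing that some Dehn quandle fails to be right orderable. Since bi-orderability passes to subgroups (one simply restricts the order), it suffices to find such a pair inside the standard two-generator parabolic $\langle s_i,s_j\rangle$ attached to an odd exponent $m_{ij}$, which by van der Lek's theorem is the dihedral Artin group of type $I_2(m_{ij})$.

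The base case $m_{ij}=3$ is immediate: the defining relation $s_is_js_i=s_js_is_j$ is exactly the braid relation and $s_i\neq s_j$, so Corollary \ref{group-with-braid-relation-not-ordererable} applies verbatim. This already settles every finite type possessing an exponent $3$, namely $A_n$, $B_n$, $D_n$, $E_n$, $F_4$, $H_3$ and $H_4$, since each of these has a pair of standard generators with $m_{ij}=3$.

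The genuine obstacle is an odd exponent $m_{ij}=m\ge 5$, where the two standard generators no longer braid: the defining relation is an $m$-term, not a three-term, braid relation. Projecting a hypothetical braid pair $x,y$ to the Coxeter quotient $I_2(m)$ and running the case analysis over rotations and reflections forces $x$ and $y$ to have equal image there, and shows that two reflections of $I_2(m)$ braid if and only if their product has order $3$, that is, if and only if $3\mid m$. Thus when $3\mid m$ I would try to lift the $I_2(3)$-subsystem of reflections to an honest braid pair among conjugates of $s_i$ inside $\langle s_i,s_j\rangle$ (recall from the proof of Corollary \ref{ker-phi-artin-trivial} that $s_i$ and $s_j$ are already conjugate, and the half-twist $\Delta=(s_is_j)_m$ interchanges them). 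When $3\nmid m$, however, no braid relation exists in $I_2(m)$ at all, so any braid pair would have to lie in the pure subgroup $\ker(\langle s_i,s_j\rangle\to I_2(m))$; a standard index computation identifies this kernel as a central extension of a free group by $\mathbb{Z}$, hence as $\mathbb{Z}\times F_{m-1}$, which is bi-orderable and therefore contains no braid pair. This is the crux: for odd $m$ with $3\nmid m$ the braid-relation mechanism of Corollary \ref{group-with-braid-relation-not-ordererable} is provably unavailable, and I expect this to be the hard case. To cover it uniformly I would step outside the braid-relation framework and use the classical isomorphism $\langle s_i,s_j\rangle\cong\langle u,v\mid u^2=v^m\rangle$ with the group of the $(2,m)$-torus knot, ruling out bi-orderability through the generalized torsion carried by nontrivial torus knot groups.
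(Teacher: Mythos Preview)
Your approach is workable but takes a long detour around the paper's much shorter argument. You fixate on Corollary~\ref{group-with-braid-relation-not-ordererable}, which requires a literal $3$--braid relation $xyx=yxy$, and then spend most of the effort showing that such a pair need not exist inside $\langle s_i,s_j\rangle$ when $3\nmid m_{ij}$, finally falling back on van der Lek's parabolic theorem, the identification $A(I_2(m))\cong\langle u,v\mid u^2=v^m\rangle$, and the generalised torsion in nontrivial torus knot groups. All of this is correct, but none of it is needed.

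The paper instead goes back to the proof mechanism of Proposition~\ref{dehn-quandle-not-ordererable} rather than its statement. For any odd $m=m_{ij}$ the Artin relation $(s_is_j)_m=(s_js_i)_m$ translates in the Dehn quandle into the pair of relations $(s_i*s_j)_m=s_j$ and $(s_j*s_i)_m=s_i$, where $(s_i*s_j)_m$ denotes the left-associated alternating product of length $m$. Then the same order-chasing as in Proposition~\ref{dehn-quandle-not-ordererable} goes through verbatim, just with $m$ steps instead of three: from $s_i<s_j$ one alternately right-multiplies by $s_i$ and $s_j$, using $s_i*s_i=s_i$ at the start and the two quandle relations to close the loop after $m$ steps, arriving at $s_j<s_i$. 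Thus $\mathcal{D}(S^{\Artin})$ is not right orderable, and since it sits inside $\Conj(\Artin)$, the group $\Artin$ cannot be bi-orderable by \cite[Proposition 3.4]{BardakovPassiSingh2020}. No parabolic subgroup theorem, no torus knot identification, and no generalised torsion are required; the argument is entirely internal to the quandle set-up of the paper. The lesson is that Proposition~\ref{dehn-quandle-not-ordererable} was stated for the length-$3$ case only because that is the cleanest to write down, not because the method is limited to it.
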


\begin{proof}
Let $s_i, s_j \in S$ be two Artin generators such that $m_{ij}$ is odd. Then, as in the proof of Proposition \ref{dehn-quandle-not-ordererable}, the relation $(s_is_j)_{m_{ij}}=(s_js_i)_{m_{ij}}$ can be written in $\mathcal{D}(S^\Artin)$ in the form $(s_i*s_j)_{m_{ij}}=s_j$ and $(s_j*s_i)_{m_{ij}}=s_i$. Now arguments as in the proof of Proposition \ref{dehn-quandle-not-ordererable} shows that $\mathcal{D}(S^\Artin)$ is not right orderable. By \cite[Proposition 3.4]{MR4450681}, if $\Artin$ is a bi-orderable group, then $\Conj(\Artin)$ is a right orderable quandle. But, $\mathcal{D}(S^\Artin)$ is a subquandle of $\Conj(\Artin)$, a contradiction.
\end{proof}
 
In the positive direction, it is known that the conjugation quandle $\Conj(G)=\mathcal{D}(G^G)$ of a bi-orderable group $G$ is right-orderable \cite[Proposition 3.4(1)]{MR4450681} and that the free quandle $\mathcal{D}(S^{F(S)})$ is right orderable \cite[Theorem 3.5]{MR4450681}. A free involutory quandle is a free object in the category of involutory quandles. A model for a free involutory quandle is the Dehn quandle $\mathcal{D}(S^{\mathcal{U}})$ of the universal Coxeter group $\mathcal{U}$ on the set $S$. Recall that, $\mathcal{U}$ is simply the free product of $|S|$ many cyclic groups of order two. We have the following result regarding the left orderability of free involuntary quandles.
 
\begin{proposition}\label{free invol left orderable} 
Free involutory quandles are left orderable.
\end{proposition}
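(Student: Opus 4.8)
The plan is to produce an explicit left order on the free involutory quandle $\mathcal{D}(S^{\mathcal{U}})$, where $\mathcal{U}$ is the universal Coxeter group on $S$, by pulling back a suitable order from the ambient group $\mathcal{U}$ together with a normal-form analysis of its elements. First I would fix a linear order on the generating set $S$ and recall that every element of $\mathcal{U} = \underset{s\in S}{\bigstar}\,\mathbb{Z}/2\mathbb{Z}$ has a unique reduced word as an alternating product $s_{i_1}s_{i_2}\cdots s_{i_m}$ with consecutive letters distinct (each letter being an involution). The elements of $\mathcal{D}(S^{\mathcal{U}})$ are the conjugates of generators, and since each generator is an involution, a conjugate $w\,s\,w^{-1}$ equals $w\,s\,w$; so every quandle element has a canonical palindromic reduced representative $w s w^{-1}$ where $w$ is chosen of minimal length (this is the standard ``half-word'' normal form for reflections in a Coxeter group).

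The key step is to define the order on $\mathcal{D}(S^{\mathcal{U}})$ via this normal form. Given a quandle element with canonical representative determined by the pair $(w,s)$ with $w$ reduced of minimal length and $s\in S$, I would compare two elements first by the length $\ell(w)$ of the conjugating word, then break ties lexicographically on the reduced word $w$ using the fixed order on $S$, and finally break remaining ties by the value of $s\in S$. This yields a strict linear order $<$ on $\mathcal{D}(S^{\mathcal{U}})$. The substantive content is then to verify compatibility with left multiplication: if $\alpha<\beta$, I must show $\gamma*\alpha<\gamma*\beta$ for every $\gamma$, where $\gamma*\alpha = t\,\alpha\,t^{-1}$ for a single generator $t$ reduces, by the involutory quandle structure and Proposition~\ref{generators-dehn-quandle-general}, to the case of conjugation by one generator at a time. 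Here I would track how conjugation by $t$ affects the minimal conjugating word $w$: either it lengthens $w$ by prepending $t$, or it cancels a leading letter, and in the generic (lengthening) case the lexicographic/length order is visibly preserved, while the cancellation cases must be shown to preserve relative order on both sides simultaneously.

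The hard part will be precisely this order-preservation check under the cancellation cases, since prepending a generator $t$ to the minimal words of two already-ordered elements need not lengthen both words by the same amount: one word may shorten while the other lengthens, potentially inverting the length comparison. To handle this I would need a lemma asserting that the operation $w\mapsto$ (reduced form of $tw$) is, for fixed $t$, a monotone bijection of the set of minimal conjugating words with respect to the chosen order — essentially that left multiplication by a single involution is an order automorphism of the Cayley-graph-induced order on reduced words. Establishing this monotonicity carefully, using the tree structure of the Cayley graph of a free product of $\mathbb{Z}/2\mathbb{Z}$'s (which is a simplicial tree), is where the real work lies; once it is in hand, left-orderability follows immediately because $S_\gamma$ then acts as an order-preserving bijection on $\mathcal{D}(S^{\mathcal{U}})$ for every $\gamma$.
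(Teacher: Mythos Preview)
Your proposal contains a genuine gap stemming from a confusion between left and right orderability. You correctly state the condition to be verified---that $\alpha<\beta$ implies $\gamma*\alpha<\gamma*\beta$---but then compute $\gamma*\alpha$ incorrectly: with the paper's convention $x*y=yxy^{-1}$, one has $t*\alpha=\alpha\,t\,\alpha^{-1}$, \emph{not} $t\,\alpha\,t^{-1}$. Your entire analysis (tracking how ``conjugation by $t$'' alters the minimal conjugating word $w$ by prepending or cancelling a leading letter) is in fact an analysis of the map $\alpha\mapsto t\,\alpha\,t^{-1}=\alpha*t=S_t(\alpha)$, and your concluding sentence makes this explicit: you aim to show that ``$S_\gamma$ acts as an order-preserving bijection.'' But that is precisely the condition for \emph{right} orderability, not left.

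This is fatal, not merely a relabelling issue, because no non-trivial involutory quandle can be right orderable: each $S_\gamma$ satisfies $S_\gamma^2=\id$, and an order-preserving involution of a linearly ordered set must be the identity (if $S_\gamma(x)=y>x$ then $S_\gamma(y)=x$, contradicting monotonicity). Since $S_t$ is certainly non-trivial in the free involutory quandle, the monotonicity lemma you propose to prove is simply false. The correct map to study for left orderability, $\alpha\mapsto\gamma*\alpha=\alpha\gamma\alpha^{-1}$, does not decompose into ``conjugation by one generator at a time'' in the way you suggest, and Proposition~\ref{generators-dehn-quandle-general} gives no such reduction.

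The paper's argument avoids all of this by a different mechanism: it constructs an explicit injective quandle homomorphism $\mathcal{D}(S^{\mathcal{U}})\hookrightarrow\Core(F(S))$ into the core quandle of the free group, then invokes the known fact that core quandles of bi-orderable groups are left orderable. No direct order on reflections is ever written down.
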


\begin{proof}
If $\mathcal{U}$ is the universal Coxeter group generated by $S$, then $\mathcal{D}(S^{\mathcal{U}})$ is the free involutory quandle on $S$. By \cite[Theorem 4.4.4]{Winker1984}, the quandle $\mathcal{D}(S^{\mathcal{U}})$ consists of left-associated products of the form
\begin{equation*}
x_n *x_{n-1}*\cdots *x_1\,,
\end{equation*}
where $n\geq1$ and $x_1,x_2,\ldots,x_n\in S$ with $x_i\neq x_{i+1}$. By \cite[Lemma 4.4.2]{Winker1984}, the multiplication of such products is given by 
\begin{equation}\label{prod-free-inv-quandle}
(x_n*x_{n-1}*\cdots*x_1)*(y_m*y_{m-1}*\cdots*y_1) = x_n*x_{n-1}*\cdots*x_1*y_1*\cdots*y_{m-1}*y_m*y_{m-1}*\cdots* y_1.
\end{equation}
Let $F(S)$ be the free group on $S$ and $\phi:\mathcal{D}(S^{\mathcal{U}})\to\Core(F(S))$ be the map defined by 
\begin{equation*}
\phi(x_n*x_{n-1}*\cdots*x_1) = x_1^{d_1}x_2^{d_2}\cdots x_{n-1}^{d_{n-1}}x_n^{d_n}x_{n-1}^{d_{n-1}}\cdots x_2^{d_2}x_1^{d_1}\,,
\end{equation*} 
where $x_1,x_2,\ldots,x_n\in S$ with $x_i\neq x_{i+1}$ and $d_i=(-1)^{i+1}$ for $1\leq i\leq n$. Using \eqref{prod-free-inv-quandle}, it can be checked that $\phi$ is a quandle homomorphism. Further, it turns out that $\phi$ is injective, and hence we have an embedding of $\mathcal{D}(S^{\mathcal{U}})$ into $\Core(F(S))$. Since free groups are bi-orderable \cite{Vinogradov1949}, it follows from \cite[Proposition 3.4]{MR4450681} that the quandle $\Core(F(S))$ is left orderable. Hence, the free involutory quandle $\mathcal{D}(S^{\mathcal{U}})$ is left orderable.
\end{proof}

\begin{proposition}\label{biordered_alexander_quandle}
Let $<$ be a bi-ordering on a group $G$ and $\phi\in\Aut(G)$. Then the following statements are equivalent:
\begin{enumerate}[(i)]
\item  $<$ is a bi-ordering on $\Alex(G,\phi)$.
\item $1<\phi(g)<g$ for all $g \in G$ with $1 <g$, where $1$ is the identity of $G$.
\end{enumerate}
\end{proposition}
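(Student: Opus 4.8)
The plan is to unwind the two halves of bi-orderability of $\Alex(G,\phi)$ separately, recovering one inequality of (ii) from each, and to work throughout with the explicit operation $x*y=\phi(xy^{-1})y$. I will use the standard features of a group bi-order $<$: left and right translations preserve $<$, conjugation preserves the positive cone $P=\{g\in G\mid 1<g\}$, and inversion reverses $<$. I will also use the elementary fact that the automorphism $\phi$ preserves $<$ if and only if $\phi(P)\subseteq P$, i.e. $1<\phi(g)$ for all $g$ with $1<g$.

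First I would treat right-orderability. For a fixed $z$, both $x*z=\phi(xz^{-1})z$ and $y*z=\phi(yz^{-1})z$ are right translates by $z$, so $x*z<y*z$ is equivalent to $\phi(xz^{-1})<\phi(yz^{-1})$, while $x<y$ is equivalent to $xz^{-1}<yz^{-1}$. Taking $z$ to be the identity (where $x*1=\phi(x)$) shows that a right-order on $\Alex(G,\phi)$ forces $\phi$ to preserve $<$, and conversely if $\phi$ preserves $<$ then the displayed equivalences give $x*z<y*z$ for every $z$. Hence $<$ is a right-order on $\Alex(G,\phi)$ precisely when $\phi$ preserves $<$, which is exactly the inequality $1<\phi(g)$ of (ii).

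The substance is in left-orderability, and the key observation is that the relevant discrepancy is independent of the spectator element $z$. A direct computation gives
$$(z*x)^{-1}(z*y)=x^{-1}\phi(xz^{-1})\phi(zy^{-1})y=x^{-1}\phi(xy^{-1})y,$$
so $z*x<z*y$ for all $z$ is equivalent to $x^{-1}\phi(xy^{-1})y>1$. Writing $g=yx^{-1}$, so that $1<g$ exactly when $x<y$ and $y=gx$, the left-hand side becomes $x^{-1}\phi(g)^{-1}gx$; by conjugation-invariance of the order this is positive if and only if $\phi(g)^{-1}g>1$, that is $\phi(g)<g$. Hence $<$ is a left-order on $\Alex(G,\phi)$ precisely when $\phi(g)<g$ for all $g$ with $1<g$, the remaining inequality of (ii).

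Combining the two paragraphs, $<$ is simultaneously a left- and right-order on $\Alex(G,\phi)$ if and only if $1<\phi(g)<g$ for all $g$ with $1<g$, which is the claimed equivalence. I expect no serious obstacle beyond careful bookkeeping: the only delicate points are the cancellation of $z$ in the left-order computation and the correct use of conjugation-invariance together with inversion-reversal when passing between the group order on $G$ and the quandle order. Keeping track of the direction of the inequality under inversion is where an error would most easily slip in.
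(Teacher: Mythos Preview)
Your proof is correct and follows essentially the same computation as the paper's: both unwind $x*z$ and $z*x$ using the explicit formula $x*y=\phi(xy^{-1})y$ and reduce to the positivity of $\phi(g)$ and of $\phi(g)^{-1}g$ for $g=yx^{-1}$. Your presentation is marginally more informative in that you isolate right-orderability as equivalent to $1<\phi(g)$ and left-orderability as equivalent to $\phi(g)<g$ separately, whereas the paper treats both implications together; but the underlying algebra and the use of translation- and conjugation-invariance of the bi-order are identical.
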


\begin{proof}
For $(i) \Rightarrow (ii)$, let $g\in G$ be such that $1<g$. Since $<$ is a bi-ordering on $\Alex(G,\phi)$, we have $1< g*1$ and $1<1*g$. This gives $1<\phi(g)$ and $1<\phi(g)^{-1}g$, and hence
$1<\phi(g)<g$.
\par
For $(ii) \Rightarrow (i)$, let $x, y, z \in G$ be such that $x<y$. This gives $1<yx^{-1}$, and hence $1<\phi(y)\phi(x)^{-1}<yx^{-1}$. Since $<$ is a bi-ordering on $G$, it follows that $\phi(x) < \phi(y)$ and  $\phi(x)^{-1}x < \phi(y)^{-1} y$. Again using bi-ordering on $G$, we get  $\phi(x)\phi(z)^{-1}z<\phi(y)\phi(z)^{-1}z$ and $\phi(z)\phi(x)^{-1}x < \phi(z)\phi(y)^{-1}y$. By definition, this gives
$x*z<y*z$ and $z*x<z*y$, which is desired.
\end{proof}

An automorphism satisfying statement (ii) of Proposition \ref{biordered_alexander_quandle} is clearly fixed-point free. By Proposition \ref{alexander_quandle_is_dehn_quandle}, generalised Alexander quandles with respect to fixed-point free automorphisms of groups are Dehn quandles. Thus, Proposition \ref{biordered_alexander_quandle} gives examples of bi-orderable Dehn quandles.
\medskip

\section{Dehn quandles of orientable surfaces}\label{section Dehn quandles of surfaces}
Let $S_{g,p}$ be an orientable surface of genus $g$ with $p$ punctures (marked points). The {\it mapping class group} $\mathcal{M}_{g,p}$ of $S_{g,p}$ is defined as the set of isotopy classes of orientation preserving self-homeomorphisms which permute the set $P$ of punctures. We will frequently refer to \cite{Farb-Margalit2012} for related results on mapping class groups. In this section, we investigate Dehn quandle of $S_{g,p}$.
\par

A {\it simple closed curve} in $S_{g,p}$ is an embedding of a circle into the interior of the surface. We say that such a curve is {\it essential} if it is not homotopic to a point or a puncture. A {\it simple closed arc} is an embedding $\sigma:[0,1]\to S_{g,p}$ of a closed interval into the surface such that $\sigma^{-1}(P)=\{0,1\}$. Such an arc is {\it essential} if $\sigma(0)\neq \sigma(1)$. Throughout, by a simple closed curve or an arc, we mean an essential simple closed curve or an arc. For simplicity, we avoid writing $p$ in the notations whenever it is zero. By abuse of notation we use the same symbol to denote the isotopy class and a representative of a curve and an arc.
\par

Given a simple closed curve $\alpha$ and a simple closed arc $\beta$ on $S_{g,p}$, we denote the right hand Dehn twist along $\alpha$ by $T_\alpha$ and the anti-clockwise half twist about the arc $\beta$ by $H_\beta$.

\begin{lemma}\label{fact1}
Let $S_{g,p}$ be a closed orientable surface of genus $g$ with $p$ punctures.
\begin{enumerate}[(i)]
\item Let $\alpha,\alpha'$ be simple closed curves in $S_{g,p}$. Then $T_\alpha$ is isotopic to $T_{\alpha'}$ if and only if $\alpha$ is isotopic to $\alpha'$.
\item Let $\beta,\beta'$ be simple closed arcs in $S_{g,p}$. Then $H_\beta$ is isotopic to $H_{\beta'}$ if and only if $\beta$ is isotopic to $\beta'$.
\end{enumerate}
\end{lemma}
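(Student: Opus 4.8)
The plan is to prove both equivalences by reducing everything to geometric intersection numbers; the ``if'' directions are immediate from naturality of the twist constructions, so the content is in the ``only if'' implications. For any self-homeomorphism $\psi$ of $S_{g,p}$ one has $\psi T_\alpha \psi^{-1} = T_{\psi(\alpha)}$ and $\psi H_\beta \psi^{-1} = H_{\psi(\beta)}$. Taking $\psi$ to be the time-one map of an ambient isotopy carrying $\alpha$ to $\alpha'$ (respectively $\beta$ to $\beta'$), and noting that such a $\psi$ is isotopic to the identity, shows at once that isotopic curves (respectively arcs) yield isotopic twists. So in each part it remains to deduce isotopy of the curves from equality of the twists.

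For part (i) I would argue the contrapositive: assuming $\alpha\not\simeq\alpha'$, I produce a test curve detecting $T_\alpha\neq T_{\alpha'}$ in $\mathcal{M}_{g,p}$. The main tool is the standard formula $i(T_\alpha^n(\gamma),\gamma)=|n|\,i(\alpha,\gamma)^2$ for simple closed curves \cite{Farb-Margalit2012}. If $i(\alpha,\alpha')>0$, then taking $\gamma=\alpha'$ gives $i(T_\alpha^n(\alpha'),\alpha')=|n|\,i(\alpha,\alpha')^2>0$ for all $n\neq 0$, whereas $T_{\alpha'}^n(\alpha')=\alpha'$; hence $T_\alpha^n\neq T_{\alpha'}^n$ and so $T_\alpha\neq T_{\alpha'}$. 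If instead $i(\alpha,\alpha')=0$, I realise $\alpha$ and $\alpha'$ disjointly; since they are essential and non-isotopic, $\alpha$ remains essential in the surface cut along $\alpha'$, so the change-of-coordinates principle furnishes a simple closed curve $\gamma$ with $i(\alpha,\gamma)>0$ and $i(\alpha',\gamma)=0$. Because $i(\alpha,\gamma)>0$ we have $T_\alpha(\gamma)\neq\gamma$, while $i(\alpha',\gamma)=0$ gives $T_{\alpha'}(\gamma)=\gamma$, again forcing $T_\alpha\neq T_{\alpha'}$.

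For part (ii) the cleanest route is to reduce to part (i). Half-twists along arcs with distinct endpoint pairs induce distinct transpositions of the punctures, hence are distinct in $\mathcal{M}_{g,p}$, so I may assume $\beta$ and $\beta'$ share both endpoints. Using the relation $H_\beta^2=T_{c}$, where $c=\partial N(\beta)$ is the boundary of a regular neighbourhood $N(\beta)$ (a twice-punctured disk), the hypothesis $H_\beta=H_{\beta'}$ gives $T_c=T_{c'}$ with $c'=\partial N(\beta')$, whence $c\simeq c'$ by part (i). Since $c$ separates off a twice-punctured disk containing exactly the two common endpoints, I can isotope so that $N(\beta)=N(\beta')$; call this fixed twice-punctured disk $\Sigma$, which is a pair of pants having the two punctures as two of its boundary components. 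In a pair of pants there is a unique isotopy class of essential simple arc joining two distinct boundary components, so $\beta\simeq\beta'$ inside $\Sigma$, and therefore in $S_{g,p}$.

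The step I expect to be most delicate is the case $i(\alpha,\alpha')=0$ of part (i): guaranteeing a transverse curve $\gamma$ that meets $\alpha$ but not $\alpha'$ requires knowing that $\alpha$ stays essential after cutting along $\alpha'$, and it must accommodate separating $\alpha$, where one only obtains even — but still positive — intersection. A parallel care is needed in part (ii) to ensure that $c=\partial N(\beta)$ is essential and that the two neighbourhoods can be superimposed after isotopy. The finitely many low-complexity surfaces where $c$ bounds a disk, or where no suitable transverse $\gamma$ exists, fall outside these generic arguments and I would verify them directly.
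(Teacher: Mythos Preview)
Your argument is correct and, for part (ii), follows essentially the same route as the paper: square the half-twist to get $H_\beta^2=T_c$ with $c=\partial N(\beta)$, apply part (i) to conclude $c\simeq c'$, and then use that a twice-punctured disk contains a unique essential arc up to isotopy. The only organisational difference is that you first dispose of the case where $\beta$ and $\beta'$ have distinct endpoint sets via the induced permutation on punctures, whereas the paper handles this implicitly by arguing that if, after cutting along a representative $\gamma$ of $c\simeq c'$, the arcs lay in different components then the half-twists could not have been isotopic.

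For part (i) the paper simply cites \cite[Section 3.3]{Farb-Margalit2012}, while you reproduce the standard intersection-number argument from that source; so there is no substantive divergence there either. Your caveat about low-complexity surfaces (where $c$ may fail to be essential, or where a suitable transverse $\gamma$ may not exist) is appropriate and matches the level of care in the cited references.
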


\begin{proof}
The first assertion is proved in \cite[Section 3.3]{Farb-Margalit2012}. By \cite[Section 1.6.2]{Kassel-Turaev2008}, if $\beta$ is isotopic to $\beta'$, then $H_\beta$ is isotopic to $H_{\beta'}$. The converse implication is proved in \cite[Lemma 4.1]{kamadamatsumoto}. However, we provide a shorter alternate proof here for the sake of completeness. Suppose that $H_\beta$ is isotopic to $H_{\beta'}$. Then $(H_{\beta})^2=T_\alpha$ is isotopic to $(H_{\beta'})^2=T_{\alpha'}$, where $\alpha$ and $\alpha'$ are simple closed curves enclosing regular neighbourhoods of $\beta$ and $\beta'$, respectively. It follows from assertion $(i)$ that $\alpha$ is isotopic to $\alpha'$. Choose $\gamma$ in the isotopy class of $\alpha$ (and hence $\alpha'$) such that it does not intersect $\beta$ and $\beta'$. Cutting the surface along $\gamma$, we see that either $\beta$ and $\beta'$ lie in different components of $\overline{S_{g,p} \setminus \gamma}$ or they both lie in the disk component with two punctures (the one bounded by $\gamma$). The former case implies that $H_{\beta}$ is not isotopic to $H_{\beta'}$, a contradiction. Hence, $\beta$ and $\beta'$ lie in the same disk with two punctures, and therefore must be isotopic as closed arcs.
\end{proof}

Let $\dq_{g,p}$ denote the set of isotopy classes of all simple closed curves and simple closed arcs in $S_{g,p}$. A simple closed curve $\alpha$ is said to be \textit{non-separating} if $\overline{S_{g,p}\setminus \alpha}$ is connected, and called \textit{separating} otherwise. Separating and non-separating simple closed arcs are defined similarly. Let $\dq_{g,p}^{ns}$ be its subset consisting of isotopy classes of all {\it non-separating} simple closed curves and simple closed arcs in $S_{g,p}$. In view of Lemma \ref{fact1}, for each isotopy class $y \in \dq_{g,p}$ of a simple closed curve, we can define $T_y$ to be the isotopy class of the right hand Dehn twist along any simple closed curve representing $y$. Similarly, if $y$ represents an isotopy class of a simple closed arc, then we can define $H_y$ to be the isotopy class of the anti-clockwise half twist about the punctures joined by any arc representing $y$. Thus, Lemma \ref{fact1} defines an injective map
$$ \tau: \dq_{g,p} \hookrightarrow \mathcal{M}_{g,p}$$
by setting
$$
\tau(x)=\begin{cases}
T_x \text{ if $x$ is the isotopy class of a simple closed curve,}\\
H_x \text{ if $x$ is the isotopy class of a simple closed arc.}
\end{cases}
$$

We will often identify elements of $\dq_{g,p}$ with corresponding Dehn twists and half twists via $\tau$. The following assertion follows from \cite[Corollary 4.15]{Farb-Margalit2012}.

\begin{lemma}\label{generation-transtitivity}
For each $g,p\geq 0$, the group $\mcg_{g,p}$ is generated by finitely many Dehn twists about non-separating simple closed curves and half twists about simple closed arcs. 
\end{lemma}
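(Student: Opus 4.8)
The plan is to split the generating set according to the two different jobs it must do, namely realising pure mapping classes and realising permutations of the punctures. Let $\Sigma_p$ denote the symmetric group on the set $P$ of punctures and let $\pi\colon\mcg_{g,p}\to\Sigma_p$ be the homomorphism recording the induced permutation of the punctures. Its kernel is the pure mapping class group $\mcg_{g,p}^{\mathrm{pure}}$, consisting of the classes that fix each puncture, giving the short exact sequence
\begin{equation*}
1\to\mcg_{g,p}^{\mathrm{pure}}\to\mcg_{g,p}\xrightarrow{\ \pi\ }\Sigma_p\to1.
\end{equation*}
Since a generating set of the kernel together with lifts of a generating set of the quotient always generate the middle group of an extension, it suffices to: (a) generate $\mcg_{g,p}^{\mathrm{pure}}$ by finitely many Dehn twists about non-separating simple closed curves, and (b) lift a finite generating set of $\Sigma_p$ using finitely many half twists.

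Step (b) is the easy half. The symmetric group $\Sigma_p$ is generated by the $p-1$ transpositions $(k,\,k{+}1)$. Fixing pairwise disjoint simple closed arcs $\beta_1,\dots,\beta_{p-1}$, where $\beta_k$ joins the $k$-th and $(k{+}1)$-th punctures, the half twist $H_{\beta_k}$ exchanges exactly those two punctures, so that $\pi(H_{\beta_k})=(k,\,k{+}1)$. Hence the finitely many half twists $H_{\beta_1},\dots,H_{\beta_{p-1}}$ furnish the required lifts of a generating set of $\Sigma_p$.

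Step (a) carries the genuine content. When $g\ge1$ one argues by induction on $p$ via the Birman exact sequence
\begin{equation*}
1\to\pi_1(S_{g,p-1})\to\mcg_{g,p}^{\mathrm{pure}}\to\mcg_{g,p-1}^{\mathrm{pure}}\to1,
\end{equation*}
whose point-pushing kernel is generated by push maps along finitely many simple loops, each such push map being a product of two Dehn twists; the base case $p\le1$ is the closed (or once-punctured) surface, where the Dehn--Lickorish theorem together with the explicit Humphries generators presents $\mcg_g$ by $2g+1$ Dehn twists about non-separating simple closed curves. The remaining subtlety is that the \emph{separating} twists produced by pushes around peripheral loops must be rewritten in terms of non-separating ones (via the lantern relation), which is exactly what allows the generating set to be taken to consist solely of non-separating twists. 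When $g=0$ there are no non-separating simple closed curves at all, and instead $\mcg_{0,p}$ is already generated by the half twists of step (b), so the conclusion holds with an empty set of Dehn-twist generators. All of these facts are assembled in \cite[Corollary 4.15]{Farb-Margalit2012}.

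The main obstacle is precisely step (a): producing a \emph{finite} generating set of $\mcg_{g,p}^{\mathrm{pure}}$ consisting only of non-separating Dehn twists. This is the heart of the Dehn--Lickorish--Humphries theory and its extension through the Birman exact sequence, and it is this input that we borrow from \cite{Farb-Margalit2012}. Once it is in hand, assembling the full generating set of $\mcg_{g,p}$ from the permutation sequence, together with the half twists of step (b), is routine.
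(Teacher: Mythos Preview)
Your argument is correct and, in fact, more detailed than what the paper supplies: the paper simply records that the lemma follows from \cite[Corollary 4.15]{Farb-Margalit2012} and gives no further explanation. Your sketch unpacks precisely the argument behind that corollary---the permutation sequence, the half-twist lifts of transpositions, the Birman exact sequence induction, and the Humphries generators for the closed base case---and then cites the same reference, so the two are entirely compatible. One small caveat: the Birman sequence as you state it requires $\chi(S_{g,p-1})<0$, so the low-complexity cases ($g=0$ with few punctures, $g=1$ with $p\le 1$) need to be checked separately, but these are all finite or cyclic groups handled directly in \cite{Farb-Margalit2012}.
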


\begin{lemma}\label{fact2}
The following hold for each $f \in\mcg_{g,p}$.
\begin{enumerate}[(i)]
\item If $x$ is the isotopy class of a simple closed curve in $S_{g,p}$, then $f T_x f^{-1}=T_{f(x)}$.
\item If $y$ is the isotopy class of a simple closed arc in $S_{g,p}$, then $f H_y f^{-1}=H_{f(y)}$.
\item $\mcg_{g,p}$ acts transitively on the set of isotopy classes of simple closed curves of the same type (separating/non-separating) and the set of isotopy classes of simple closed arcs. 
\end{enumerate}
\end{lemma}

\begin{proof}
The first assertion is proved in \cite[Section 3.3]{Farb-Margalit2012}. By definition, a half twist along an arc acts non-trivially only in a neighbourhood of the arc. We can always choose the neighbourhood such that it does not interest any boundary or a puncture. Then, the same argument as in assertion (i) proves assertion (ii) as well. The third assertion for curves follows from \cite[Section 1.3.1]{Farb-Margalit2012} and for arcs follows from \cite[Section 1.3.3]{Farb-Margalit2012}.
\end{proof}

By Lemma \ref{fact2}, the set of all Dehn twists along non-separating simple closed curves forms one conjugacy class in $\mcg_{g,p}$, whereas the set of all half twists along simple closed arcs forms another conjugacy class in $\mcg_{g,p}$. If $S$ is a generating set for $\mcg_{g,p}$ consisting of generators as in Lemma \ref{generation-transtitivity} together with separating simple closed curves, then $\dq_{g,p}=S^{\mcg_{g,p}}$ as sets. Hence, $\dq_{g,p}$ has the structure of the Dehn quandle of the group $\mcg_{g,p}$ with respect to $S$, and is called the Dehn quandle of the surface $S_{g,p}$. Note that $\dq_{g,p}$ contains $\dq_{g,p}^{ns}$ as a subquandle. To be explicit, by Lemma \ref{fact2}(i)-(ii), the quandle operation in $\dq_{g,p}$ is given as
$$x*y=\begin{cases}
T_y(x) \text{ if $y$ is the isotopy class of a simple closed curve,}\\
H_y(x) \text{ if $y$ is the isotopy class of a simple closed arc.}
\end{cases}$$
Further, in view of Lemma \ref{fact2}(i)-(ii), the map $\tau$ becomes an embedding of quandles as
$$\tau(x * y)=\tau({A_y(x)})= A_{A_y(x)}= A_y A_x A_y^{-1}= A_x * A_y=\tau(x)* \tau(y),$$
where $A=T$ or $H$ depending on whether $x, y$ are simple closed curves or closed arcs.
\par
The construction of the Dehn quandle of a surface $S_{g,p}$ for $p=0$ first appeared in the work of Zablow \cite{Zablow1999, Zablow2003}. Further, \cite{kamadamatsumoto,Yetter2003} considered the quandle structure on the set of isotopy classes of simple closed arcs in $S_{g,p}$ for $p\geq 2$, and called it {\it quandle of cords}. In general, the quandle of cords is a subquandle of $\mathcal{D}_{g,p}$. We can define the Dehn quandle of an orientable surface with boundary components in a similar fashion. In the case of a disk with $n+1$ punctures, this quandle can be identified with the Dehn quandle of the braid group $B_{n+1}$ with respect to its standard set of generators, that is, half twists along the cords. A presentation for the quandle of cords of the plane and the 2-sphere is given in \cite{kamadamatsumoto}. To keep our discussions less technical we will exclude surfaces with boundary components.
\medskip

By \cite[Section 4.4.4]{Farb-Margalit2012}, a generating set for the mapping class group $\mcg_{g, p}$ for $g, p \ge 0$ is the set of appropriate twists along the curves and the arcs of the set 
$$\{a_1,a_2,b_1,b_2,\ldots,b_g,c_1,c_2, \ldots,c_{g-1},a_{12},a_{23},\ldots,a_{(p-1)p},\sigma_{12},\sigma_{23},\ldots,\sigma_{(p-1)p}\},$$ representatives of which are shown in Figure \ref{generators-with-punctures}. A generating set for $\mcg_g$ is given by the set of Dehn twists along the curves of the set $\{a_1,a_2,b_1,b_2,\ldots,b_g,c_1,c_2, \ldots,c_{g-1} \}$ as shown in Figure \ref{Humphries generators}.

\begin{figure}[hbt!]
\begin{center}
\begin{subfigure}{0.993\textwidth}
\centering
\begin{tikzpicture}[scale=0.4]
\begin{knot}[clip width=6, clip radius=4pt]
\strand[-] (0,4.7) to [out=left, in=left, looseness=2.2] (0,-3.6) to [out=right, in=left, looseness=1.3] (3,-3) to [out=right, in=left, looseness=1.3] (6,-3.6) to [out=right, in=left, looseness=1.3] (9,-3) to [out=right, in=left, looseness=1.3] (12,-3.6) to [out=right, in=left, looseness=1.3] (16,-3) to [out=right, in=left, looseness=1.3] (20,-3.6) to [out=right, in=right, looseness=2.2] (20,4.7) to [out=left, in=right, looseness=1.3] (16,4.1) to [out=left, in=right, looseness=1.3] (12,4.7) to [out=left, in=right, looseness=1.3] (9,4.1) to [out=left, in=right, looseness=1.3] (6,4.7) to [out=left, in=right, looseness=1.3] (3,4.1) to [out=left, in=right, looseness=1.3] (0,4.7);
\end{knot};

\begin{scope}
\clip (-1,0.5)rectangle(1,1);
\draw (0,0) circle [x radius=9mm, y radius=9mm];
\end{scope}
\begin{scope}[shift={(0,1.7)}]
\clip (-1.1,-0.8)rectangle(1.1,-1.5);
\draw (0,0) circle [x radius=14mm, y radius=14mm];
\end{scope}
\begin{scope}[shift={(6,0)}]
\clip (-1,0.5)rectangle(1,1);
\draw (0,0) circle [x radius=9mm, y radius=9mm];
\end{scope}
\begin{scope}[shift={(6,1.7)}]
\clip (-1.1,-0.8)rectangle(1.1,-1.5);
\draw (0,0) circle [x radius=14mm, y radius=14mm];
\end{scope}
\begin{scope}[shift={(12,0)}]
\clip (-1,0.5)rectangle(1,1);
\draw (0,0) circle [x radius=9mm, y radius=9mm];
\end{scope}
\begin{scope}[shift={(12,1.7)}]
\clip (-1.1,-0.8)rectangle(1.1,-1.5);
\draw (0,0) circle [x radius=14mm, y radius=14mm];
\end{scope}
\begin{scope}[shift={(20,0)}]
\clip (-1,0.5)rectangle(1,1);
\draw (0,0) circle [x radius=9mm, y radius=9mm];
\end{scope}
\begin{scope}[shift={(20,1.7)}]
\clip (-1.1,-0.8)rectangle(1.1,-1.5);
\draw (0,0) circle [x radius=14mm, y radius=14mm];
\end{scope}

\draw (0,0.6) circle [x radius=20mm, y radius=14mm];
\draw (6,0.6) circle [x radius=20mm, y radius=14mm];
\draw (12,0.6) circle [x radius=20mm, y radius=14mm];
\draw (20,0.6) circle [x radius=20mm, y radius=14mm];

\begin{scope}
\clip (0,0.5)rectangle(-0.7,-3.7);
\draw (0,-1.65) circle [x radius=4mm, y radius=19.5mm];
\end{scope}
\begin{scope}
\clip (0,0.5)rectangle(0.7,-3.7);
\draw[dashed] (0,-1.65) circle [x radius=4mm, y radius=19.5mm];
\end{scope}
\begin{scope}[shift={(6,0)}]
\clip (0,0.5)rectangle(-0.7,-3.7);
\draw (0,-1.65) circle [x radius=4mm, y radius=19.5mm];
\end{scope}
\begin{scope}[shift={(6,0)}]
\clip (0,0.5)rectangle(0.7,-3.7);
\draw[dashed] (0,-1.65) circle [x radius=4mm, y radius=19.5mm];
\end{scope}

\begin{scope}
\clip (0.7,0)rectangle(5.3,0.5);
\draw[dashed] (3,0.5) circle [x radius=23mm, y radius=3mm];
\end{scope}
\begin{scope}
\clip (0.7,0.5)rectangle(5.3,1);
\draw (3,0.5) circle [x radius=23mm, y radius=3mm];
\end{scope}
\begin{scope}[shift={(6,0)}]
\clip (0.7,0)rectangle(5.3,0.5);
\draw[dashed] (3,0.5) circle [x radius=23mm, y radius=3mm];
\end{scope}
\begin{scope}[shift={(6,0)}]
\clip (0.7,0.5)rectangle(5.3,1);
\draw (3,0.5) circle [x radius=23mm, y radius=3mm];
\end{scope}
\begin{scope}
\clip (12,0)rectangle(15,0.5);
\draw[dashed] (16,0.5) circle [x radius=33mm, y radius=3mm];
\end{scope}
\begin{scope}
\clip (12,0.5)rectangle(15,1);
\draw (16,0.5) circle [x radius=33mm, y radius=3mm];
\end{scope}
\begin{scope}
\clip (17,0)rectangle(20,0.5);
\draw[dashed] (16,0.5) circle [x radius=33mm, y radius=3mm];
\end{scope}
\begin{scope}
\clip (17,0.5)rectangle(20,1);
\draw (16,0.5) circle [x radius=33mm, y radius=3mm];
\end{scope}

\begin{scope}
\clip (21.4,3.2)rectangle(23.6,0.04);
\draw (20,0.5) circle [x radius=35mm, y radius=29mm];
\end{scope}
\begin{scope}
\clip (20.5,-2.37)rectangle(22.4,-1.64);
\draw (20,0.5) circle [x radius=35mm, y radius=29mm];
\end{scope}

\begin{scope}[shift={(21.9,2.3)},rotate=55]
\clip (-2.16,0)rectangle(-0.6,0.41);
\draw (0,0) circle [x radius=21.6mm, y radius=4mm];
\end{scope}
\begin{scope}[shift={(21.9,2.3)},rotate=55]
\clip (0.3,0)rectangle(2.16,0.41);
\draw (0,0) circle [x radius=21.6mm, y radius=4mm];
\end{scope}
\begin{scope}[shift={(21.9,2.3)},rotate=55]
\clip (-2.16,0)rectangle(-0.4,-0.41);
\draw[dashed] (0,0) circle [x radius=21.6mm, y radius=4mm];
\end{scope}
\begin{scope}[shift={(21.9,2.3)},rotate=55]
\clip (0.6,0)rectangle(2.16,-0.41);
\draw[dashed] (0,0) circle [x radius=21.6mm, y radius=4mm];
\end{scope}
\begin{scope}[shift={(22.95,1.05)},rotate=15]
\clip (-2.33,0)rectangle(2.33,0.41);
\draw (0,0) circle [x radius=23.3mm, y radius=4mm];
\end{scope}
\begin{scope}[shift={(22.95,1.05)},rotate=15]
\clip (-2.33,0)rectangle(-0.65,-0.41);
\draw[dashed] (0,0) circle [x radius=23.3mm, y radius=4mm];
\end{scope}
\begin{scope}[shift={(22.95,1.05)},rotate=15]
\clip (0.6,0)rectangle(2.33,-0.41);
\draw[dashed] (0,0) circle [x radius=23.3mm, y radius=4mm];
\end{scope}
\begin{scope}[shift={(21.3,-1.45)},rotate=300]
\clip (-2.21,0)rectangle(-0.25,0.41);
\draw (0,0) circle [x radius=21.1mm, y radius=4mm];
\end{scope}
\begin{scope}[shift={(21.3,-1.45)},rotate=300]
\clip (0.69,0)rectangle(2.21,0.41);
\draw (0,0) circle [x radius=21.1mm, y radius=4mm];
\end{scope}
\begin{scope}[shift={(21.3,-1.45)},rotate=300]
\clip (-2.21,0)rectangle(-0.5,-0.41);
\draw[dashed] (0,0) circle [x radius=21.1mm, y radius=4mm];
\end{scope}
\begin{scope}[shift={(21.3,-1.45)},rotate=300]
\clip (0.25,0)rectangle(2.21,-0.41);
\draw[dashed] (0,0) circle [x radius=21.1mm, y radius=4mm];
\end{scope}

\node at (16,0.5) {\Large{$\cdots$}}; \node at (21.4,3.13) {\Huge{$\cdot$}}; \node at (23.03,1.9) {\Huge{$\cdot$}}; \node at (23.46,0.04) {\Huge{$\cdot$}}; \node at (22.4,-1.65) {\Huge{$\cdot$}}; \node at (20.5,-2.4) {\Huge{$\cdot$}}; \node at (22.92,-1.105) {$\cdot$}; \node at (23.13,-0.805) {$\cdot$}; \node at (23.28,-0.505) {$\cdot$};

\node at (-0.93,-1.7) {$a_1$}; \node at (5.02,-1.7) {$a_2$};
\node at (0,2.6) {$b_1$}; \node at (6,2.6) {$b_2$}; \node at (12,2.6) {$b_3$}; \node at (20,2.6) {$b_g$};
\node at (3,1.27) {$c_1$}; \node at (9,1.27) {$c_2$}; \node at (14.7,1.27) {$c_3$}; \node at (17,1.27) {$c_{g-1}$};
\node at (23.6,4.4) {$a_{12}$}; \node at (26,1.7) {$a_{23}$}; \node at (23,-3.98) {$a_{(p-1)p}$};
\node at (21.8,2.35) {{\small$\sigma_{12}$}}; \node at (22.75,0.9) {{\small$\sigma_{23}$}}; \node at (20.9,-1.5) {{\small$\sigma_{(p-1)p}$}};
\end{tikzpicture}
\caption{Generators for punctured surface}
\label{generators-with-punctures}
\end{subfigure}
\newline\vskip5mm

\begin{subfigure}{0.993\textwidth}
\centering
\begin{tikzpicture}[scale=0.4]
\begin{knot}[clip width=6, clip radius=4pt]
\strand[-] (0,4) to [out=left, in=left, looseness=2.3] (0,-2.9) to [out=right, in=left, looseness=1.3] (3,-2.3) to [out=right, in=left, looseness=1.3] (6,-2.9) to [out=right, in=left, looseness=1.3] (9,-2.3) to [out=right, in=left, looseness=1.3] (12,-2.9) to [out=right, in=left, looseness=1.3] (16,-2.3) to [out=right, in=left, looseness=1.3] (20,-2.9) to [out=right, in=right, looseness=2.3] (20,4) to [out=left, in=right, looseness=1.3] (16,3.4) to [out=left, in=right, looseness=1.3] (12,4) to [out=left, in=right, looseness=1.3] (9,3.4) to [out=left, in=right, looseness=1.3] (6,4) to [out=left, in=right, looseness=1.3] (3,3.4) to [out=left, in=right, looseness=1.3] (0,4);
\end{knot};

\begin{scope}
\clip (-1,0.5)rectangle(1,1);
\draw (0,0) circle [x radius=9mm, y radius=9mm];
\end{scope}
\begin{scope}[shift={(0,1.7)}]
\clip (-1.1,-0.8)rectangle(1.1,-1.5);
\draw (0,0) circle [x radius=14mm, y radius=14mm];
\end{scope}
\begin{scope}[shift={(6,0)}]
\clip (-1,0.5)rectangle(1,1);
\draw (0,0) circle [x radius=9mm, y radius=9mm];
\end{scope}
\begin{scope}[shift={(6,1.7)}]
\clip (-1.1,-0.8)rectangle(1.1,-1.5);
\draw (0,0) circle [x radius=14mm, y radius=14mm];
\end{scope}
\begin{scope}[shift={(12,0)}]
\clip (-1,0.5)rectangle(1,1);
\draw (0,0) circle [x radius=9mm, y radius=9mm];
\end{scope}
\begin{scope}[shift={(12,1.7)}]
\clip (-1.1,-0.8)rectangle(1.1,-1.5);
\draw (0,0) circle [x radius=14mm, y radius=14mm];
\end{scope}
\begin{scope}[shift={(20,0)}]
\clip (-1,0.5)rectangle(1,1);
\draw (0,0) circle [x radius=9mm, y radius=9mm];
\end{scope}
\begin{scope}[shift={(20,1.7)}]
\clip (-1.1,-0.8)rectangle(1.1,-1.5);
\draw (0,0) circle [x radius=14mm, y radius=14mm];
\end{scope}

\draw (0,0.6) circle [x radius=20mm, y radius=14mm];
\draw (6,0.6) circle [x radius=20mm, y radius=14mm];
\draw (12,0.6) circle [x radius=20mm, y radius=14mm];
\draw (20,0.6) circle [x radius=20mm, y radius=14mm];

\begin{scope}
\clip (0,0.5)rectangle(-0.6,-3.2);
\draw (0,-1.33) circle [x radius=4mm, y radius=16.1mm];
\end{scope}
\begin{scope}
\clip (0,0.5)rectangle(0.6,-3.2);
\draw[dashed] (0,-1.33) circle [x radius=4mm, y radius=16.1mm];
\end{scope}
\begin{scope}[shift={(6,0)}]
\clip (0,0.5)rectangle(-0.6,-3.2);
\draw (0,-1.33) circle [x radius=4mm, y radius=16.1mm];
\end{scope}
\begin{scope}[shift={(6,0)}]
\clip (0,0.5)rectangle(0.6,-3.2);
\draw[dashed] (0,-1.33) circle [x radius=4mm, y radius=16.1mm];
\end{scope}

\begin{scope}
\clip (0.7,0)rectangle(5.3,0.5);
\draw[dashed] (3,0.5) circle [x radius=23mm, y radius=3mm];
\end{scope}
\begin{scope}
\clip (0.7,0.5)rectangle(5.3,1);
\draw (3,0.5) circle [x radius=23mm, y radius=3mm];
\end{scope}
\begin{scope}[shift={(6,0)}]
\clip (0.7,0)rectangle(5.3,0.5);
\draw[dashed] (3,0.5) circle [x radius=23mm, y radius=3mm];
\end{scope}
\begin{scope}[shift={(6,0)}]
\clip (0.7,0.5)rectangle(5.3,1);
\draw (3,0.5) circle [x radius=23mm, y radius=3mm];
\end{scope}
\begin{scope}
\clip (12,0)rectangle(15,0.5);
\draw[dashed] (16,0.5) circle [x radius=33mm, y radius=3mm];
\end{scope}
\begin{scope}
\clip (12,0.5)rectangle(15,1);
\draw (16,0.5) circle [x radius=33mm, y radius=3mm];
\end{scope}
\begin{scope}
\clip (17,0)rectangle(20,0.5);
\draw[dashed] (16,0.5) circle [x radius=33mm, y radius=3mm];
\end{scope}
\begin{scope}
\clip (17,0.5)rectangle(20,1);
\draw (16,0.5) circle [x radius=33mm, y radius=3mm];
\end{scope}

\node at (16,0.5) {\Large{$\cdots$}}; \node at (-0.93,-1.7) {$a_1$}; \node at (5.02,-1.7) {$a_2$}; 
\node at (0,2.6) {$b_1$}; \node at (6,2.6) {$b_2$}; \node at (12,2.6) {$b_3$}; \node at (20,2.6) {$b_g$};
\node at (3,1.27) {$c_1$}; \node at (9,1.27) {$c_2$}; \node at (14.7,1.27) {$c_3$}; \node at (17,1.27) {$c_{g-1}$};
\end{tikzpicture}
\caption{Humphries generators}
\label{Humphries generators}
\end{subfigure}
\newline\vskip5mm

\begin{subfigure}{0.993\textwidth}
\centering
\begin{tikzpicture}[scale=0.4]
\begin{knot}[clip width=6, clip radius=4pt]
\strand[-] (0,4) to [out=left, in=left, looseness=2] (0,-2.9) to [out=right, in=left, looseness=1.3] (3,-2.3) to [out=right, in=left, looseness=1.3] (6,-2.9) to [out=right, in=left, looseness=1.3] (9,-2.3) to [out=right, in=left, looseness=1.3] (12,-2.9) to [out=right, in=left, looseness=1.3] (17,-2.3) to [out=right, in=left, looseness=1.3] (22,-2.9) to [out=right, in=right, looseness=2] (22,4) to [out=left, in=right, looseness=1.3] (17,3.4) to [out=left, in=right, looseness=1.3] (12,4) to [out=left, in=right, looseness=1.3] (9,3.4) to [out=left, in=right, looseness=1.3] (6,4) to [out=left, in=right, looseness=1.3] (3,3.4) to [out=left, in=right, looseness=1.3] (0,4);
\end{knot};

\begin{scope}
\clip (-1,0.5)rectangle(1,1);
\draw (0,0) circle [x radius=9mm, y radius=9mm];
\end{scope}
\begin{scope}[shift={(0,1.7)}]
\clip (-1.1,-0.8)rectangle(1.1,-1.5);
\draw (0,0) circle [x radius=14mm, y radius=14mm];
\end{scope}
\begin{scope}[shift={(6,0)}]
\clip (-1,0.5)rectangle(1,1);
\draw (0,0) circle [x radius=9mm, y radius=9mm];
\end{scope}
\begin{scope}[shift={(6,1.7)}]
\clip (-1.1,-0.8)rectangle(1.1,-1.5);
\draw (0,0) circle [x radius=14mm, y radius=14mm];
\end{scope}
\begin{scope}[shift={(12,0)}]
\clip (-1,0.5)rectangle(1,1);
\draw (0,0) circle [x radius=9mm, y radius=9mm];
\end{scope}
\begin{scope}[shift={(12,1.7)}]
\clip (-1.1,-0.8)rectangle(1.1,-1.5);
\draw (0,0) circle [x radius=14mm, y radius=14mm];
\end{scope}
\begin{scope}[shift={(22,0)}]
\clip (-1,0.5)rectangle(1,1);
\draw (0,0) circle [x radius=9mm, y radius=9mm];
\end{scope}
\begin{scope}[shift={(22,1.7)}]
\clip (-1.1,-0.8)rectangle(1.1,-1.5);
\draw (0,0) circle [x radius=14mm, y radius=14mm];
\end{scope}

\begin{scope}
\clip (2,-2.4)rectangle(3,3.5);
\draw (3,0.54) circle [x radius=4mm, y radius=28.7mm];
\end{scope}
\begin{scope}
\clip (3,3.5)rectangle(4,-2.4);
\draw[dashed] (3,0.54) circle [x radius=4mm, y radius=28.7mm];
\end{scope}
\begin{scope}[shift={(6,0)}]
\clip (2,-2.4)rectangle(3,3.5);
\draw (3,0.54) circle [x radius=4mm, y radius=28.7mm];
\end{scope}
\begin{scope}[shift={(6,0)}]
\clip (3,3.5)rectangle(4,-2.4);
\draw[dashed] (3,0.54) circle [x radius=4mm, y radius=28.7mm];
\end{scope}

\begin{scope}
\clip (14,-2.6)rectangle(15,3.7);
\draw (15,0.54) circle [x radius=4mm, y radius=30.5mm];
\end{scope}
\begin{scope}
\clip (15,3.7)rectangle(16,-2.6);
\draw[dashed] (15,0.54) circle [x radius=4mm, y radius=30.5mm];
\end{scope}
\begin{scope}[shift={(4,0)}]
\clip (14,-2.6)rectangle(15,3.7);
\draw (15,0.54) circle [x radius=4mm, y radius=30.5mm];
\end{scope}
\begin{scope}[shift={(4,0)}]
\clip (15,3.7)rectangle(16,-2.6);
\draw[dashed] (15,0.54) circle [x radius=4mm, y radius=30.5mm];
\end{scope}

\node at (17,0.5) {\Large{$\cdots$}}; \node at (2.1,1.6) {$s_1$}; \node at (8.1,1.6) {$s_2$}; \node at (14.1,1.6) {$s_3$}; \node at (17.76,1.6) {$s_{g-1}$};
\end{tikzpicture}
\caption{Separating curves}
\label{generators_dehn_quandle}
\end{subfigure}
\caption{Generators for mapping class group and Dehn quandle of a surface}
\label{generators_mgc_dq}
\end{center}
\end{figure}
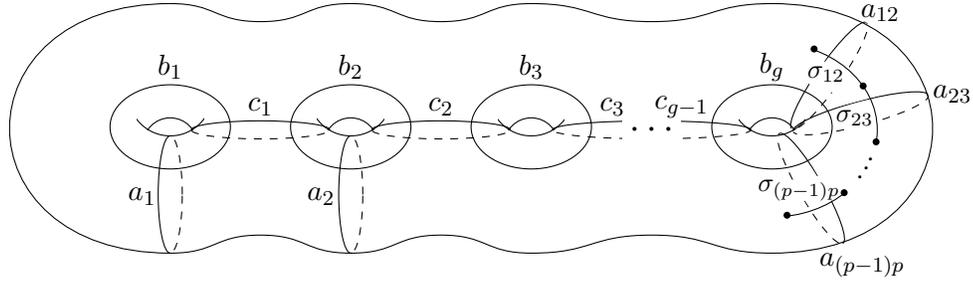
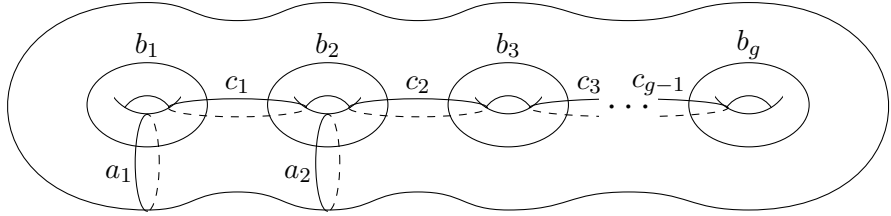
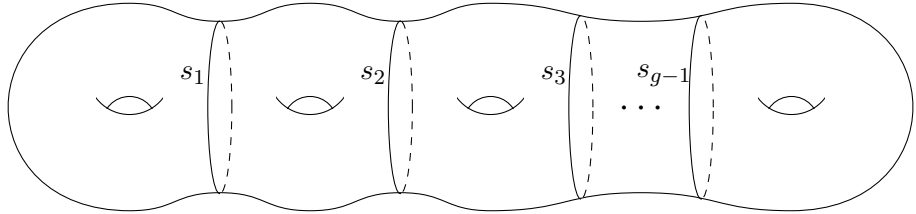

Proposition \ref{generators-dehn-quandle-general} yields the following.

\begin{proposition}\label{cor:gen_Dehnquandle-with-punctures}
For $g, p \ge 0$, the following hold:
\begin{enumerate}[(i)]
\item $\mathcal{D}_{g, p}^{ns}$ is generated by $$\{a_1,a_2,b_1,b_2,\ldots,b_g,c_1,c_2, \ldots,c_{g-1},a_{12},a_{23},\ldots,a_{(p-1)p},\sigma_{12},\sigma_{23},\ldots,\sigma_{(p-1)p} \}.$$
\item $\mathcal{D}_g^{ns}$ is generated by $$\{ a_1,a_2,,b_1,b_2,\ldots,b_g,c_1,c_2,\ldots,c_{g-1} \}.$$
\item $\mathcal{D}_g$ is generated by $$\{a_1,a_2,b_1,b_2,\ldots,b_g,c_1,c_2,\ldots,c_{g-1},s_1,s_2,\ldots,s_{\lfloor \frac{g}{2}\rfloor} \}.$$
\end{enumerate}
Here, the generators for $\mathcal{D}_{g, p}^{ns}$, $\mathcal{D}_g^{ns}$ and $\mathcal{D}_g$ are shown in Figure \ref{generators_mgc_dq}.
\end{proposition}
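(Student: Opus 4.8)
The plan is to obtain all three statements as instances of Proposition \ref{generators-dehn-quandle-general}, which asserts that whenever a group $G$ is generated by a set $S$, the Dehn quandle $\mathcal{D}(S^G)$ is generated as a quandle by $S$. Thus for each part it suffices to verify two things: \textbf{(a)} that the displayed set $S$ generates the ambient mapping class group, and \textbf{(b)} that the conjugacy closure $S^{\mcg}$ coincides as a set with the quandle in question, i.e. that $S$ meets every $\mcg$-orbit of the isotopy classes comprising that quandle. Granting (a) and (b), the quandle equals $\mathcal{D}(S^{\mcg})$ and Proposition \ref{generators-dehn-quandle-general} yields the conclusion. Throughout I identify curves and arcs with the corresponding Dehn twists and half twists via $\tau$, so that conjugation of twists corresponds to the $\mcg$-action on isotopy classes by Lemma \ref{fact2}(i)--(ii).

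For (i) and (ii) the displayed sets are precisely the generating sets of $\mcg_{g,p}$ and $\mcg_g$ recalled just above the statement (the Farb--Margalit set and the Humphries set, respectively), so (a) is immediate by citation. Every member of these sets is, by construction (Figure \ref{generators_mgc_dq}), either a Dehn twist about a non-separating simple closed curve or a half twist about a simple closed arc, hence lies in $\mathcal{D}_{g,p}^{ns}$. By Lemma \ref{fact2}(iii) the non-separating curves form a single $\mcg_{g,p}$-orbit and the arcs form a single $\mcg_{g,p}$-orbit, and the union of these two orbits is exactly $\mathcal{D}_{g,p}^{ns}$. Since $S$ contains a representative of each orbit that is present (a non-separating curve always, and an arc whenever $p \ge 2$), the closure $S^{\mcg_{g,p}}$ equals $\mathcal{D}_{g,p}^{ns}$, giving (b). Part (ii) is the special case $p=0$, where no arcs occur and $S^{\mcg_g}$ is the single orbit of all non-separating curves.

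For (iii) the target $\mathcal{D}_g$ is the full Dehn quandle, which in addition to the non-separating curves contains all separating simple closed curves. Here (a) holds because the Humphries subset already generates $\mcg_g$, so adjoining $s_1,\ldots,s_{\lfloor g/2\rfloor}$ preserves the generation. For (b) I would invoke the change-of-coordinates principle: a separating simple closed curve in $S_g$ cuts the surface into two subsurfaces of genera $h$ and $g-h$, and its $\mcg_g$-orbit is determined by the unordered pair $\{h,g-h\}$, so the separating curves fall into exactly $\lfloor g/2\rfloor$ orbits indexed by $h=1,\ldots,\lfloor g/2\rfloor$. The curves $s_h$ of Figure \ref{generators_dehn_quandle} are chosen so that $s_h$ bounds a genus-$h$ subsurface and hence represents the $h$-th orbit; together with the Humphries generators, which cover the non-separating orbit, $S$ then meets every orbit of curves, so $S^{\mcg_g}=\mathcal{D}_g$. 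Applying Proposition \ref{generators-dehn-quandle-general} in each case completes the argument.

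The only step beyond routine bookkeeping is the orbit count in part (iii): recognising that separating curves split into precisely $\lfloor g/2\rfloor$ topological types and that the displayed $s_h$ exhaust them. Everything else follows formally from the transitivity statements of Lemma \ref{fact2}(iii) and the generation results recalled before the proposition.
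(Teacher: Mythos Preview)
Your proof is correct and follows exactly the paper's approach: the paper states only that ``Proposition \ref{generators-dehn-quandle-general} yields the following,'' relying on the setup just before the proposition (the Farb--Margalit and Humphries generating sets and the identification $\dq_{g,p}=S^{\mcg_{g,p}}$ via Lemma \ref{fact2}). Your version simply makes explicit the verifications (a) and (b) that the paper leaves to the reader, including the change-of-coordinates count of $\lfloor g/2\rfloor$ separating-curve orbits needed for part (iii), which the paper encodes only through the choice of $s_1,\ldots,s_{\lfloor g/2\rfloor}$ in Figure \ref{generators_dehn_quandle}.
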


Next, we give an alternate and short proof of the main result of \cite[Theorem 3.1]{NiebrzydowskiPrzytycki2009}.

\begin{theorem}\label{Niebrzydowski Przytycki theorem}
The knot quandle of the trefoil knot is isomorphic to the Dehn quandle of the torus.
\end{theorem}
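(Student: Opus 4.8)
The plan is to reduce both quandles to Dehn quandles of groups via Corollary \ref{knot-prime-dehn-quandle} and then produce an explicit isomorphism induced by the classical surjection $B_3 \twoheadrightarrow \mathrm{SL}_2(\mathbb{Z})$. First I would observe that the trefoil $K$ is prime and its group is $G(K) = \langle x, y \mid xyx = yxy\rangle \cong B_3$ with $x$ a Wirtinger generator; by Corollary \ref{knot-prime-dehn-quandle} this gives $Q(K) \cong \dq(x^{G(K)})$, the Dehn quandle of the braid group $B_3$ with respect to a single standard generator. On the torus side, $\mcg_1 \cong \mathrm{SL}_2(\mathbb{Z})$, every essential simple closed curve is non-separating, and by Lemma \ref{fact2}(iii) the mapping class group acts transitively on their isotopy classes; hence $\dq_1$ is a single conjugacy class, namely $\dq_1 = \dq(T^{\mathrm{SL}_2(\mathbb{Z})})$ for a Dehn twist $T$, whose image in $\mathrm{SL}_2(\mathbb{Z})$ is the parabolic matrix $\bigl(\begin{smallmatrix}1&1\\0&1\end{smallmatrix}\bigr)$.

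Next I would invoke the standard surjection $\pi\colon B_3 \twoheadrightarrow \mathrm{SL}_2(\mathbb{Z})$ defined by $\sigma_1 \mapsto \bigl(\begin{smallmatrix}1&1\\0&1\end{smallmatrix}\bigr)$ and $\sigma_2 \mapsto \bigl(\begin{smallmatrix}1&0\\-1&1\end{smallmatrix}\bigr)$; one checks that these two parabolics satisfy the braid relation and generate $\mathrm{SL}_2(\mathbb{Z})$, and that the kernel is the infinite cyclic \emph{central} subgroup $\langle \Delta^4\rangle$, where $\Delta = \sigma_1\sigma_2\sigma_1$, the element $\Delta^2$ generates $\Z(B_3)$, and $\Delta^2 \mapsto -I$. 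Since $\pi$ sends $x$ to the Dehn twist $T$ and carries conjugates to conjugates, it restricts to a surjective quandle homomorphism $\bar\pi\colon \dq(x^{G(K)}) \to \dq(T^{\mathrm{SL}_2(\mathbb{Z})}) = \dq_1$.

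The crux is injectivity of $\bar\pi$. Suppose $\bar\pi(gxg^{-1}) = \bar\pi(hxh^{-1})$ and set $k = h^{-1}g$, so that $\pi(k)$ centralizes $\pi(x)$. A direct computation gives $\C_{\mathrm{SL}_2(\mathbb{Z})}\bigl(\bigl(\begin{smallmatrix}1&1\\0&1\end{smallmatrix}\bigr)\bigr) = \{\pm\bigl(\begin{smallmatrix}1&n\\0&1\end{smallmatrix}\bigr) : n \in \mathbb{Z}\} = \langle \pi(x)\rangle \times \{\pm I\}$, and $-I = \pi(\Delta^2)$. Hence $\pi(k) = \pi(x^n\Delta^{2\varepsilon})$ for some $n \in \mathbb{Z}$ and $\varepsilon \in \{0,1\}$, so $k \in x^n\Delta^{2\varepsilon}\ker(\pi) = x^n\Delta^{2\varepsilon}\langle\Delta^4\rangle$. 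Because $\Delta^2$ is central in $B_3$, the element $k$ commutes with $x$, whence $gxg^{-1} = hxh^{-1}$. Thus $\bar\pi$ is a bijection and therefore a quandle isomorphism, giving $Q(K) \cong \dq_1$.

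The main obstacle I anticipate lies in the last paragraph: the interplay between the central kernel of $\pi$ and the centralizer of the parabolic element. The argument works precisely because $\ker(\pi) \subseteq \Z(B_3)$, so the ambiguity introduced by the kernel never moves us off $\C_{B_3}(x)$; this is exactly the central-extension phenomenon quantified by Theorem \ref{central-extension}. One could alternatively phrase the whole argument through enveloping groups, using that $\Env(Q(K)) = G(K) = B_3$ surjects onto $\mathrm{SL}_2(\mathbb{Z})$ with central kernel, but the concrete centralizer computation above seems the most transparent route.
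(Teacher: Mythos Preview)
Your argument is correct and follows the same overall architecture as the paper: both identify $Q(K)$ with $\dq(\sigma_1^{B_3})$ via Corollary~\ref{knot-prime-dehn-quandle}, push forward along the surjection $B_3 \twoheadrightarrow \mcg_1$ with central kernel, and then verify injectivity of the induced quandle map. The difference lies entirely in how injectivity is established. The paper argues via the abelianisation: writing $\sigma_1^x(\sigma_1^{-1})^y = (\sigma_1\sigma_2)^{6k}$ and observing that the left side, being a product of commutators, has exponent sum $0$ while the right has exponent sum $12k$, one gets $k=0$ immediately. You instead compute the centraliser of the parabolic $\bigl(\begin{smallmatrix}1&1\\0&1\end{smallmatrix}\bigr)$ in $\mathrm{SL}_2(\mathbb{Z})$ and lift it back through the central kernel to conclude that $h^{-1}g$ already centralises $x$ in $B_3$. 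The paper's route is shorter and uses nothing about $\mathrm{SL}_2(\mathbb{Z})$ beyond the presentation of $\mcg_1$; your route is more structural and makes transparent \emph{why} the central kernel cannot disturb the conjugacy class, which is the mechanism behind Theorem~\ref{central-extension} that you rightly flag. Both are perfectly valid, and your centraliser approach would generalise more readily to situations where an exponent-sum invariant is unavailable.
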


\begin{proof}
Recall that the trefoil knot $K$ is prime and its knot group is isomorphic to the braid group $B_3= \langle \sigma_1, \sigma_2 \mid \sigma_1\sigma_2\sigma_1=\sigma_2\sigma_1\sigma_2, \rangle$. Thus, by Corollary \ref{knot-prime-dehn-quandle}, we have $Q(K)\cong \mathcal{D}(\sigma_1^{B_3})$. By \cite[Section 3.6.4]{Farb-Margalit2012}, we have $\mathcal{M}_1= \langle a, b \mid aba=bab, \quad (ab)^6=1 \rangle$. Thus, there is a surjective group homomorphism $\psi: B_3 \to \mathcal{M}_1$ given by $\psi(\sigma_1)=a$ and $\psi(\sigma_2)=b$. Further, $\ker (\psi)= \langle (\sigma_1\sigma_2)^6 \rangle$ is contained in the center of $B_3$. Notice that the restriction $\psi|: \mathcal{D}(\sigma_1^{B_3}) \to \mathcal{D}(a^{\mathcal{M}_1})$ of $\psi$ is a surjective quandle homomorphism. 

\par
We claim that $\psi|$ is injective, and hence an isomorphism. Suppose that $\psi|(\sigma_1^x)=\psi|(\sigma_1^y)$ for $x, y \in B_3$. This implies that $\sigma_1^x ({\sigma_1}^{-1})^y \in \ker (\psi)$, and hence 
\begin{equation}\label{trefoil-dehn}
\sigma_1^x ({\sigma_1}^{-1})^y= (\sigma_1\sigma_2)^{6k}
\end{equation}
 for some $k \in \mathbb{Z}$. But, notice that $\sigma_1^x ({\sigma_1}^{-1})^y=[x, \sigma_1][\sigma_1, y]$ lies in the commutator subgroup of $B_3$, which consists of words in $\sigma_1, \sigma_2$ whose total sum of exponents is zero. In view of \eqref{trefoil-dehn}, this is possible if and only if $k=0$, that is, $\sigma_1^x=\sigma_1^y$. Thus, $$Q(K) \cong \mathcal{D}(\sigma_1^{B_3}) \cong \mathcal{D}(a^{\mathcal{M}_1})= \mathcal{D}_1,$$ as desired.
\end{proof}

If $a$ and $b$ are isotopy classes of simple closed curves in an orientable surface, then their {\it geometric intersection number} $i(a,b)$ is defined to be the minimal number of intersection points between a representative curve in the class $a$ and a representative curve in the class $b$. Low order intersection numbers of simple closed curves can be interpreted in terms of relations in Dehn quandles as follows. 

\begin{lemma}\label{rel_dt_curve}
 If $a$ and $b$ are distinct isotopy classes of simple closed curves, then the following hold:
\begin{enumerate}[(i)]
\item $i(a,b)=0$ if and only if $a*b=a$ and $b*a=b$,
\item $i(a,b)=1$ if and only if $a*b*a=b$ and $b*a*b=a$.
\end{enumerate}
\end{lemma}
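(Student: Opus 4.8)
The plan is to translate each intersection‑number condition into a statement about the twist maps and then read it back as a quandle identity, using throughout the identifications $a*b = T_b(a)$ together with the conjugation rule $T_{f(c)} = f T_c f^{-1}$ (Lemma \ref{fact2}(i)) and the fact that $T_x = T_y$ exactly when $x=y$ (Lemma \ref{fact1}(i)). For the two \emph{forward} implications I would argue geometrically and via the standard relations among Dehn twists; for the two \emph{converse} implications the key device will be the twist--intersection formula $i(T_b(a),a) = i(a,b)^2$ \cite[Proposition 3.4]{Farb-Margalit2012}, which converts a quandle identity into a numerical equation for $i(a,b)$.

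For (i), the forward direction is immediate: if $i(a,b)=0$ then $a$ and $b$ admit disjoint representatives, so $T_b$ may be supported in an annular neighbourhood of $b$ missing $a$; hence $T_b(a)=a$, that is $a*b=a$, and symmetrically $b*a=b$. For the converse I would use the intersection formula: from $a*b=a$ we get $T_b(a)=a$, whence $i(a,b)^2 = i(T_b(a),a) = i(a,a) = 0$, so $i(a,b)=0$.

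For (ii), the forward direction rests on the braid relation $T_a T_b T_a = T_b T_a T_b$, which holds precisely when $i(a,b)=1$ \cite[Section 3.5]{Farb-Margalit2012}. Writing $a*b*a = T_a(T_b(a))$ and applying $T_{f(c)} = f T_c f^{-1}$ gives
\[
T_{a*b*a} = (T_a T_b) T_a (T_a T_b)^{-1} = T_a T_b T_a T_b^{-1} T_a^{-1} = (T_b T_a T_b) T_b^{-1} T_a^{-1} = T_b,
\]
so $a*b*a = b$ by Lemma \ref{fact1}(i); interchanging $a$ and $b$ yields $b*a*b = a$. For the converse I would again invoke the intersection formula together with the invariance of $i$ under the homeomorphism $T_a$: since $T_a(a)=a$,
\[
i(a,b) = i(a*b*a,\, a) = i\big(T_a(a*b),\, T_a(a)\big) = i(a*b,\, a) = i(a,b)^2,
\]
forcing $i(a,b) \in \{0,1\}$. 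The distinctness hypothesis then rules out $0$: if $i(a,b)=0$ then part (i) gives $a*b=a$, so $a*b*a = a*a = a$, and combined with $a*b*a = b$ this forces $a=b$, contradicting $a \neq b$. Hence $i(a,b)=1$.

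The routine parts are the two forward implications; the subtle point, and the step I would be most careful with, is the pair of converse implications, where the whole argument hinges on the quadratic formula $i(T_b(a),a) = i(a,b)^2$ to pin $i(a,b)$ down to the value $0$ or $1$, and on the distinctness hypothesis to discard the degenerate value $0$.
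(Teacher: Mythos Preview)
Your argument is correct. The paper proceeds differently, and more tersely: it simply quotes the standard group-theoretic equivalences from \cite{Farb-Margalit2012}---namely that $i(a,b)=0$ is equivalent to $[T_a,T_b]=1$, and (for $a\neq b$) that $i(a,b)=1$ is equivalent to the braid relation $T_aT_bT_a=T_bT_aT_b$---and then observes that these group relations translate verbatim into the quandle identities via $T_{f(c)}=fT_cf^{-1}$ and injectivity of $c\mapsto T_c$.

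Your route differs mainly in the converse implications. Instead of invoking those packaged ``iff'' statements (whose hard directions ultimately rest on Theorem~3.14 of \cite{Farb-Margalit2012} that $\langle T_a,T_b\rangle$ is free when $i(a,b)\ge 2$), you go straight to the twist--intersection formula $i(T_b(a),a)=i(a,b)^2$ and extract the value of $i(a,b)$ numerically. This is a genuinely more elementary path: it avoids the free-group theorem entirely and makes the role of the distinctness hypothesis in part~(ii) completely explicit. The paper's version buys brevity by outsourcing to known mapping-class-group facts; yours buys self-containment at the cost of a couple of extra lines.
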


\begin{proof}
Recall that $i(a,b)=0$ if and only if $[T_a,T_b]=1$. But the latter relation in terms of the quandle operation is the same as $a*b=a$ and $b*a=b$. Similarly, $i(a,b)=1$ if and only if $T_aT_bT_a=T_bT_aT_b$. In this case, the latter relation is the same as $a*b*a=b$ and $b*a*b=a$.
\end{proof}

The geometric intersection number completely determines two generated subquandles of Dehn quandles of surfaces.

\begin{proposition}\label{structure_2_gen_quandle}
Let $a, b$ be two distinct elements in $\mathcal{D}_g^{ns}$. Then the following hold:
\begin{enumerate}[(i)]
\item If $i(a,b)=0$, then $\langle a, b \rangle$ is the trivial quandle. 
\item If $i(a,b)=1$, then $\langle a, b \rangle$ is the knot quandle of the trefoil knot.
\item If $i(a,b)\geq2$, then $\langle a, b \rangle$ is the free quandle on two generators.
\end{enumerate}
\end{proposition}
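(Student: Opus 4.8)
The plan is to handle the three cases separately, in each case using Lemma \ref{rel_dt_curve} to translate the intersection hypothesis into quandle relations and then identifying $\langle a,b\rangle$ through the subgroup $H=\langle T_a,T_b\rangle\le\mcg_g$ that it generates. The organising principle is the remark following Theorem \ref{injectivity-dehn-eta}: since $\langle a,b\rangle$ is a subquandle of $\Conj(\mcg_g)$ generating $H$, we have $\langle a,b\rangle\cong\dq(\{a,b\}^{H})=\dq(\{T_a,T_b\}^{H})$, so the quandle is completely determined by the pair $(H,\{T_a,T_b\})$. Thus each case reduces to identifying $H$ together with the conjugation action on $\{T_a,T_b\}$.

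For (i), Lemma \ref{rel_dt_curve}(i) gives $a*b=a$ and $b*a=b$, while trivially $a*a=a$ and $b*b=b$; the same identities hold for $*^{-1}$ because $T_a$ and $T_b$ commute. Hence every right multiplication by a generator fixes both generators, so the subquandle generated by $a$ and $b$ closes up to $\{a,b\}$ with all right multiplications equal to the identity, which is exactly the trivial quandle on two elements. For (iii), the essential input is the classical structure theorem for subgroups of a mapping class group generated by two Dehn twists (Ishida): when $i(a,b)\ge 2$ the twists $T_a$ and $T_b$ freely generate a free group of rank two. In particular $T_a$ and $T_b$ are non-conjugate in $H\cong F(T_a,T_b)$, so $\{T_a,T_b\}^{H}=\{T_a,T_b\}^{F(T_a,T_b)}$ is precisely the underlying set of the free quandle on two generators. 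Combining this with $\langle a,b\rangle\cong\dq(\{a,b\}^{H})$ and the defining fact that $\dq(S^{F(S)})$ is the free quandle on $S$, we conclude that $\langle a,b\rangle$ is the free quandle on two generators.

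For (ii), Lemma \ref{rel_dt_curve}(ii) yields the braid relation $T_aT_bT_a=T_bT_aT_b$, so $\sigma_1\mapsto T_a,\ \sigma_2\mapsto T_b$ defines a surjection $\pi\colon B_3\twoheadrightarrow H$. Because $\sigma_1$ and $\sigma_2$ are conjugate in $B_3$, the map $\pi$ restricts to a surjective quandle homomorphism $\bar\pi\colon\dq(\sigma_1^{B_3})\to\dq(\{T_a,T_b\}^{H})\cong\langle a,b\rangle$, and by Corollary \ref{knot-prime-dehn-quandle} the source $\dq(\sigma_1^{B_3})$ is the knot quandle of the trefoil. It then remains to prove $\bar\pi$ injective, and here I would argue as in the proof of Theorem \ref{Niebrzydowski Przytycki theorem}: if $\bar\pi(\sigma_1^{u})=\bar\pi(\sigma_1^{v})$, then $\sigma_1^{u}(\sigma_1^{-1})^{v}\in\ker\pi$, an element whose image under abelianisation $B_3\to B_3^{ab}\cong\mathbb{Z}$ is zero. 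To finish I would control $\ker\pi$ geometrically: a regular neighbourhood of $a\cup b$ is a one-holed torus $N$ whose mapping class group is $B_3$, and $\ker\pi$ is central in $B_3$, being trivial when the inclusion $N\hookrightarrow S_g$ is injective on mapping class groups (the case $g\ge 2$) and equal to $\langle(\sigma_1\sigma_2)^{6}\rangle$ when $a\cup b$ fills the closed torus. Since every nontrivial central element $(\sigma_1\sigma_2)^{3k}$ has nonzero exponent sum, the element above must be trivial, giving $\sigma_1^{u}=\sigma_1^{v}$ and hence injectivity; the closed-torus case is in any event absorbed by Theorem \ref{Niebrzydowski Przytycki theorem}, which already identifies $\dq_1$ with the trefoil quandle.

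The difficulty is concentrated entirely in (ii). Constructing the surjection $\bar\pi$ is routine once the braid relation is in hand, but the injectivity forces one to rule out quandle relations beyond the braid relation, i.e. to pin down $\ker\pi$. The genuinely delicate point is the low-genus behaviour, where $H\cong SL_2(\mathbb{Z})$ rather than $B_3$; it is precisely there that one must know $\ker\pi$ is central with elements of nonzero exponent sum, and where leaning on the already-established Theorem \ref{Niebrzydowski Przytycki theorem} is the cleanest way to close the argument.
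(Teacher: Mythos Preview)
Your proposal is correct and follows essentially the same strategy as the paper: for (i) and (iii) you use Lemma \ref{rel_dt_curve} and the Ishida/Farb--Margalit freeness theorem exactly as the paper does, just recast through the organising identity $\langle a,b\rangle\cong\dq(\{T_a,T_b\}^{H})$. For (ii) your argument is in fact more complete than the paper's: the paper simply invokes Theorem \ref{Niebrzydowski Przytycki theorem} after recording the braid relations, whereas you supply the missing injectivity step by controlling $\ker\pi$ via the inclusion of the one-holed torus (trivial kernel for $g\ge 2$, central with nonzero exponent sum for $g=1$), which is precisely what is needed to rule out extra relations in $\langle a,b\rangle$.
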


\begin{proof}
By Lemma \ref{rel_dt_curve}(i), $i(a,b)=0$ if and only if $a*b=a$ and $b*a=b$. Thus, the subquandle generated by $a, b$ is the trivial quandle on two elements. Again, by Lemma \ref{rel_dt_curve}(ii), $i(a,b)=1$ if and only if $a*b*a=b$ and $b*a*b=a$. In view of Theorem \ref{Niebrzydowski Przytycki theorem}, the subquandle generated by $a, b$ is the knot quandle of the trefoil knot. This proves the first two assertions.
\par
For the third assertion, suppose that $i(a,b)\geq2$. Recall that $\mathcal{D}_g^{ns}$ is a subquandle of $\Conj (\mathcal{M}_g)$. Thus, if there is any non-trivial relation in $\mathcal{D}_g^{ns}$ involving $a$ and $b$, then rewriting this relation in terms of conjugation gives a non-trivial relation in $\mathcal{M}_g$ involving $T_a$ and $T_b$. But, by \cite[Theorem 3.14]{Farb-Margalit2012}, the group generated by $T_a$ and $T_b$ is isomorphic to the free group of rank 2, a contradiction. Hence, $a$ and $b$ generate the free quandle on two generators.
\end{proof}

\begin{proposition}
There is an embedding of quandles $\mathcal{D}_1 \hookrightarrow \mathcal{D}_{g,p}$ for $g \ge 1$ and $p \ge 0$.
\end{proposition}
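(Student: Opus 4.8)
The plan is to exhibit $\mathcal{D}_1$ as an explicit subquandle of $\mathcal{D}_{g,p}$ cut out by a pair of curves meeting once. Since $g \ge 1$, the surface $S_{g,p}$ contains two non-separating simple closed curves $\alpha$ and $\beta$ with geometric intersection number $i(\alpha,\beta)=1$; for instance one may take the handle curves $a_1$ and $b_1$ of Figure \ref{generators_mgc_dq}. First I would record that a regular neighbourhood $\Sigma$ of $\alpha\cup\beta$ is a one-holed torus embedded in $S_{g,p}$, and that the subquandle $\langle\alpha,\beta\rangle$ of $\mathcal{D}_{g,p}$ they generate coincides with $\dq(\alpha^{H})$, where $H=\langle T_\alpha,T_\beta\rangle\le\mcg_{g,p}$. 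This uses Lemma \ref{fact2} to identify $T_{w(\alpha)}=wT_\alpha w^{-1}$ with the curve $w(\alpha)$, together with the relation $\alpha*\beta*\alpha=\beta$ of Lemma \ref{rel_dt_curve}(ii), which shows $\beta\in\alpha^{H}$ so that both generators lie in a single orbit.

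The heart of the argument is then to identify $H$ as a group with a distinguished generator. The curves $\alpha,\beta$ fill $\Sigma$, so $H$ is the image of $\mcg(\Sigma)\cong B_3$ under the inclusion $\Sigma\hookrightarrow S_{g,p}$, with $T_\alpha,T_\beta$ corresponding to the standard generators $\sigma_1,\sigma_2$. By the chain relation, $(T_\alpha T_\beta)^6=T_{\partial\Sigma}$, which is central in $H$. Whenever $\partial\Sigma$ is essential in $S_{g,p}$, the twist $T_{\partial\Sigma}$ has infinite order and the subsurface inclusion is injective, giving $H\cong B_3$ with $T_\alpha$ a standard generator; hence $\langle\alpha,\beta\rangle=\dq(\alpha^{H})\cong\dq(\sigma_1^{B_3})$, which is $\mathcal{D}_1$ by Theorem \ref{Niebrzydowski Przytycki theorem}. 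This is essentially the content of Proposition \ref{structure_2_gen_quandle}(ii), now carried out in the presence of punctures.

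The main obstacle is the handful of small cases where $\partial\Sigma$ fails to be essential, namely $(g,p)=(1,0)$ and $(1,1)$, where $\Sigma$ fills all of $S_{g,p}$ and $\partial\Sigma$ bounds a disk or a once-punctured disk. Here the kernel of $\mcg(\Sigma)\to\mcg_{g,p}$ is generated by $T_{\partial\Sigma}=(T_\alpha T_\beta)^6$, so $H\cong B_3/\langle(\sigma_1\sigma_2)^6\rangle\cong\mathcal{M}_1$. But Theorem \ref{Niebrzydowski Przytycki theorem} proves precisely that the surjection $B_3\to\mathcal{M}_1$ restricts to an isomorphism $\dq(\sigma_1^{B_3})\xrightarrow{\ \cong\ }\dq(a^{\mathcal{M}_1})=\mathcal{D}_1$, so again $\langle\alpha,\beta\rangle\cong\mathcal{D}_1$ (and for $(g,p)=(1,0)$ the map is just the identity of $\mathcal{D}_1$). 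In every case this produces the desired embedding $\mathcal{D}_1\hookrightarrow\mathcal{D}_{g,p}$.
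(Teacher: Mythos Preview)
Your proof is correct and follows essentially the same route as the paper: pick two non-separating curves with intersection number one and show they generate a copy of $\mathcal{D}_1$, ultimately relying on Theorem~\ref{Niebrzydowski Przytycki theorem}. The paper's version is terser---it simply invokes the presentation of $\mathcal{D}_1$ from Theorem~\ref{Niebrzydowski Przytycki theorem} to define $\psi$ on generators and then cites Proposition~\ref{structure_2_gen_quandle}(ii) to conclude that the image is isomorphic to $\mathcal{D}_1$. You instead unpack that proposition by identifying $\langle\alpha,\beta\rangle$ with $\dq(\alpha^H)$ and computing $H=\langle T_\alpha,T_\beta\rangle$ explicitly via the subsurface inclusion, which has the advantage of working uniformly in the punctured setting (Proposition~\ref{structure_2_gen_quandle} is stated only for $\dq_g^{ns}$) and of treating the exceptional cases $(g,p)=(1,0),(1,1)$ that the paper's proof leaves implicit.
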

\begin{proof}
Theorem \ref{Niebrzydowski Przytycki theorem} gives $$\mathcal{D}_1=\langle a,b \mid a*b*a=b, \quad b*a*b=a\rangle.$$ For $g\geq1$ and $p \ge 0$, choose two non-separating simple closed curves $x, y$ in $S_{g,p}$ such that $i(x,y)=1$. Consider the map $\psi: \mathcal{D}_1\to\mathcal{D}_{g,p}$ given by $\psi(a)= x$ and $\psi(b)= y$. Then, by Proposition \ref{structure_2_gen_quandle}(ii), $\psi$ is an isomorphism onto its image.
\end{proof}

Theorem \ref{Niebrzydowski Przytycki theorem} (\cite[Theorem 3.1]{NiebrzydowskiPrzytycki2009}) gives a presentation of the Dehn quandle of the torus. In general, in the follow-up \cite{DhanwaniRaundalSingh2022} of this work, we have given two approaches to write explicit presentations for the class of Dehn quandles.
\par

Regarding orderability of Dehn quandles of surfaces, as a consequence of Proposition \ref{dehn-quandle-not-ordererable}, we have

\begin{corollary}\label{dehn-quandle-not-order}
If $g \ge 0$ and $p \ge 3$ or $g \ge 1$ and $p$ is arbitrary, then the Dehn quandle $\mathcal{D}_{g,p}^{ns}$ is neither right nor left orderable. 
\end{corollary}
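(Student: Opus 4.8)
The plan is to deduce this directly from Proposition \ref{dehn-quandle-not-ordererable}. By Proposition \ref{cor:gen_Dehnquandle-with-punctures}(i), the quandle $\mathcal{D}_{g,p}^{ns}$ equals $\mathcal{D}(S^{\mathcal{M}_{g,p}})$ for the generating set $S$ of $\mathcal{M}_{g,p}$ depicted in Figure \ref{generators_mgc_dq}. Taking $A = S$ and $G = \mathcal{M}_{g,p}$, Proposition \ref{dehn-quandle-not-ordererable} tells us that it is enough to exhibit two distinct elements $x, y \in S$ satisfying the braid relation $xyx = yxy$ in $\mathcal{M}_{g,p}$; the failure of both right and left orderability of $\mathcal{D}_{g,p}^{ns}$ is then immediate. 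So the entire argument reduces to producing such a pair in each of the two regimes.

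When $g \ge 1$ (with $p$ arbitrary), I would take the two non-separating simple closed curves $a_1$ and $b_1$ from $S$, which meet once, so that $i(a_1, b_1) = 1$. By Lemma \ref{rel_dt_curve}(ii), this is equivalent to $a_1 * b_1 * a_1 = b_1$ and $b_1 * a_1 * b_1 = a_1$, that is, to the braid relation $T_{a_1} T_{b_1} T_{a_1} = T_{b_1} T_{a_1} T_{b_1}$ in $\mathcal{M}_{g,p}$. Since $a_1$ and $b_1$ are distinct isotopy classes, this pair does the job, and the presence of punctures is irrelevant because the two curves can be isotoped away from them.

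In the remaining regime $g = 0$ and $p \ge 3$ there are no handles, hence no pair of curves with intersection number one, and I would instead use half twists. The arcs $\sigma_{12}$ and $\sigma_{23}$ about two adjacent pairs of punctures both lie in $S$ once $p \ge 3$, and their half twists are precisely the standard braid generators of the mapping class group of the punctured sphere, so that $H_{\sigma_{12}} H_{\sigma_{23}} H_{\sigma_{12}} = H_{\sigma_{23}} H_{\sigma_{12}} H_{\sigma_{23}}$. As these two arcs represent distinct isotopy classes, Proposition \ref{dehn-quandle-not-ordererable} again applies.

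I do not expect a genuine obstacle: the argument is purely a matter of producing one braid-related pair per regime. The only point requiring care is the $g = 0$ case, where one must invoke the half-twist relations rather than Dehn twist relations; but this is the classical fact that the mapping class group of a sphere with at least three punctures contains adjacent half twists obeying the braid relation, so no new input is needed. The hypothesis $p \ge 3$ is exactly what makes two adjacent arcs available, and the surfaces excluded by the statement, namely $g = 0$ with $p \le 2$, are precisely those too small to carry any such braid-related pair.
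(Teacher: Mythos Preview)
Your proposal is correct and follows exactly the approach the paper intends: the paper merely states the corollary ``as a consequence of Proposition \ref{dehn-quandle-not-ordererable}'' without spelling out the details, and your argument supplies precisely the expected braid-related pair in each regime (Dehn twists $T_{a_1}, T_{b_1}$ when $g\ge 1$, half twists $H_{\sigma_{12}}, H_{\sigma_{23}}$ when $g=0$ and $p\ge 3$).
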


\begin{remark}
Most results of this as well as the next section will hold for Dehn quandles of surfaces with boundary components.
\end{remark}

\begin{remark}
We note that, using the work of Gervais \cite{Gervais1996} on central extensions of mapping class groups, Nosaka \cite[Theorem 4]{MR4028088} has determined enveloping groups of Dehn quandles of surfaces of genus more than two and without punctures. At this point we have not been able to determine enveloping groups in case of surfaces with punctures.
\end{remark}

\medskip

\section{Automorphisms of Dehn quandles of orientable surfaces}\label{section auto dehn quandles surfaces}
In this section, we compute automorphism groups of Dehn quandles of surfaces. We say that two simple closed arcs are {\it disjoint} if they don't have common end points and also don't intersect anywhere on the surface.

\begin{lemma}\label{rel_dt_arcs}
 If $a$ and $b$ are non-isotopic simple closed arcs, then the following holds:
\begin{enumerate}[(i)]
\item $a$ and $b$ are disjoint if and only if $a*b=a$ and $b*a=b$.
\item $a$ and $b$ have only one end point in common and do not intersect anywhere on the surface if and only if $a*b*a=b$ and $b*a*b=a$.
\end{enumerate}
\end{lemma}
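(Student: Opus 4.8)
The plan is to follow the proof of Lemma~\ref{rel_dt_curve}, using half twists in place of Dehn twists. Recall that for arcs the quandle operation is $x*y = H_y(x)$, that the assignment $y \mapsto H_y$ is injective (Lemma~\ref{fact1}(ii)), and that $f H_y f^{-1} = H_{f(y)}$ (Lemma~\ref{fact2}(ii)). First I would record the purely algebraic reduction. The relation $a*b = a$ reads $H_b(a) = a$, hence $H_{H_b(a)} = H_a$; since $H_{H_b(a)} = H_b H_a H_b^{-1}$, this is equivalent to $[H_a, H_b] = 1$, and by injectivity the reverse implication, together with the symmetric relation $b*a = b$, follows. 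Likewise $a*b*a = b$ reads $H_a H_b(a) = b$, so $H_{H_a H_b(a)} = H_a H_b H_a H_b^{-1} H_a^{-1} = H_b$, which is equivalent to the braid relation $H_a H_b H_a = H_b H_a H_b$. Thus the two assertions reduce to the geometric equivalences: (i) $a$ and $b$ are disjoint if and only if $[H_a, H_b] = 1$; and (ii) $a$ and $b$ share exactly one endpoint and do not otherwise intersect if and only if $H_a H_b H_a = H_b H_a H_b$.

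For the forward implications I would use explicit local models. If $a$ and $b$ are disjoint, I can choose representatives with disjoint regular neighbourhoods, so that $H_a$ and $H_b$ have disjoint supports and commute; equivalently $H_b$ fixes the class $a$. If $a$ and $b$ share exactly one endpoint and are otherwise disjoint, then $a \cup b$ is an embedded arc through three punctures whose regular neighbourhood is a thrice-punctured disk; inside the mapping class group of this subsurface, $H_a$ and $H_b$ are the standard half-twist generators $\sigma_1, \sigma_2$, which satisfy the braid relation, and this relation then holds in $\mcg_{g,p}$.

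The converse implications are where the real work lies, since I must rule out every other configuration of $a$ and $b$. Using Lemma~\ref{fact2} and the change-of-coordinates principle of \cite{Farb-Margalit2012}, the pair $\{a,b\}$ can be put into one of finitely many local models, and the relation in question is then checked inside the mapping class group of a regular neighbourhood of $a \cup b$ for each model. A useful device is that $H_a^2 = T_\alpha$ and $H_b^2 = T_\beta$, where $\alpha, \beta$ bound regular neighbourhoods of $a, b$, so that $[H_a, H_b] = 1$ forces $[T_\alpha, T_\beta] = 1$, whence $i(\alpha, \beta) = 0$ by Lemma~\ref{rel_dt_curve}; translating back to the arcs eliminates the cases of an interior crossing and of a single shared endpoint. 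The genuinely delicate configurations, which this squared (curve) relation does \emph{not} distinguish, are two arcs sharing \emph{both} endpoints and a pair sharing one endpoint but with additional crossings: in the former I must show that the half twists fail to commute, and in the latter that they fail the braid relation. Handling these degenerate cases by a direct computation in the model subsurface is the main obstacle, and it is exactly what pins down the precise intersection pattern asserted in each item.
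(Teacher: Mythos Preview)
Your approach matches the paper's: the same algebraic reduction to commuting/braid relations among half twists, the same forward arguments via disjoint supports and the thrice-punctured disk model, and the same use of $H_a^2 = T_\alpha$, $H_b^2 = T_\beta$ together with Lemma~\ref{rel_dt_curve} for the converse of (i).

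Where the paper is sharper is precisely in the cases you flag as ``the main obstacle.'' For the converse of (i), once $i(\alpha,\beta)=0$ the paper avoids any direct computation: if $\alpha$ and $\beta$ are disjoint and non-isotopic, the twice-punctured disks $D_a$, $D_b$ are disjoint and we are done; if $\alpha$ and $\beta$ are isotopic, one cuts along a common representative $\gamma$ and observes that either $a,b$ lie in different components (hence disjoint) or both lie in the twice-punctured disk bounded by $\gamma$, which carries a \emph{unique} isotopy class of simple closed arc, contradicting $a\not\simeq b$. So no check that the half twists fail to commute is needed for the ``both endpoints shared'' configuration.

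For the converse of (ii), the paper organises the case analysis around the consequence $H_aH_b(a)=b$ of the braid relation rather than computing inside a model subsurface. If $a$ and $b$ share no endpoint, then $H_aH_b(a)$ has the same endpoints as $a$ and so cannot equal $b$. If they share one or both endpoints but $i(a,b)\geq 1$ in the interior, the braid relation would force $i(a,b)=i(a,H_aH_b(a))=i(a,H_b(a))$, and an explicit local picture shows $i(a,H_b(a))>i(a,b)$. If they share both endpoints with $i(a,b)=0$, the same device gives $i(a,H_b(a))=1\neq 0$. These endpoint and intersection-number arguments are exactly what replace the subsurface computations you anticipate.
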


\begin{proof}
By definition of a half twist about a simple closed arc, there exists a twice punctured disk outside which the half twist act trivially. Let $D_a$ and $D_b$ be such disks for the half twists $H_a$ and $H_b$, respectively. Let $\alpha$ and $\beta$ be boundaries of $D_a$ and $D_b$, respectively. Then $\alpha$ and $\beta$ are simple closed curves and it is known that $H_a^2=T_\alpha$ and $H_b^2=T_\beta.$ 
\par
We now prove assertion (i). If $a$ and $b$ are disjoint, then we can choose $D_a$ and $D_b$ such that they do not intersect. Thus, by definition, $H_a$ and $H_b$ commute. In view of Lemma~\ref{fact2}(ii), we have $H_{H_a(b)}=H_aH_bH_a^{-1}=H_b$. Now, by Lemma \ref{fact1}(ii), $H_a(b)=b$, which in quandle operation can be written as $b*a=b$. Similarly, one can show that $a*b=a$.
\par
For the converse, assume that $a*b=a$. This implies that $H_b(a)=a$, and hence $H_{H_b(a)}=H_a$, which further gives $H_aH_b=H_bH_a$. It follows that $T_\alpha=H_a^2$ commutes with $T_\beta=H_b^2$. Now, Lemma~\ref{rel_dt_curve}(i) gives $i(\alpha,\beta)=0$. This implies that either $\alpha$ and $\beta$ are isotopic or disjoint. If $\alpha$ and $\beta$ are disjoint and not isotopic, then the disks $D_a$ and $D_b$ are disjoint. This implies that the arcs $a$ and $b$ are disjoint, which is desired. Now, suppose that $\alpha$ and $\beta$ are isotopic. We choose $\gamma$ in the isotopy class of $\alpha$ and $\beta$ such that $\gamma$ does not intersect $a$ and $b$. If $a$ and $b$ lie in different connected component of $\overline{S_{g,p}\setminus{\gamma}}$, then $a$ and $b$ are disjoint, and we are done. Now, suppose that $a$ and $b$ lie in the same connected component of $\overline{S_{g,p}\setminus{\gamma}}$. Consider the twice punctured disk $D_\gamma$ which contains both $a$ and $b$. Since upto isotopy there exists only one simple closed arc in a disk with two punctures, it follows that $a$ and $b$ are isotopic, which is a contradiction. This completes the proof of assertion (i).
\par
 
Next we consider assertion (ii). Let $a$ and $b$ are simple closed arcs which share exactly one end point and do not intersect anywhere on the surface. Then it is known that $H_aH_bH_a=H_bH_aH_b$, which can be rewritten as $b*a*b=b$ and $a*b*a=b$. For converse, we prove that half twists $H_a$ and $H_b$ do not satisfy the braid relation for any other intersection pattern of arcs $a$ and $b$. We argue case by case for each intersection pattern.

\begin{enumerate}[(1)]
\item  $a$ and $b$ do not share any end point: Since $H_b(a)$ and $a$ have the same set of end points, it follows that $H_aH_b(a)$ and $a$ also have the same set of end points. Now, if $H_a$ and $H_b$ satisfy the braid relation, then $H_aH_b(a) =b$, which implies that $a$ and $b$ have the same set of end points, a contradiction.
\item $a$ and $b$ share (one or both) end points and $i(a,b)\geq1$ on the surface: If $H_a$ and $H_b$ satisfy the braid relation, then $H_aH_b(a)=b$. This implies that $i(a,b)=i(a, H_aH_b(a))=i(a, H_b(a))$. But, Figure~\ref{braid_rel_arcs_intersecting} shows that $i(a, H_b(a))> i(a,b)$, a contradiction.

\begin{figure}[hbt!]
\centering
\labellist
\small
\pinlabel $a$ at 85 120
\pinlabel $b$ at 68 77
\pinlabel $\beta$ at 15 130
\pinlabel $a$ at 410 150
\pinlabel $b$ at 420 81
\pinlabel $\beta$ at 485 130
\pinlabel $H_b(a)$ at 452 45
\pinlabel $H_b$ at 240 110
\endlabellist
\includegraphics[height=4cm]{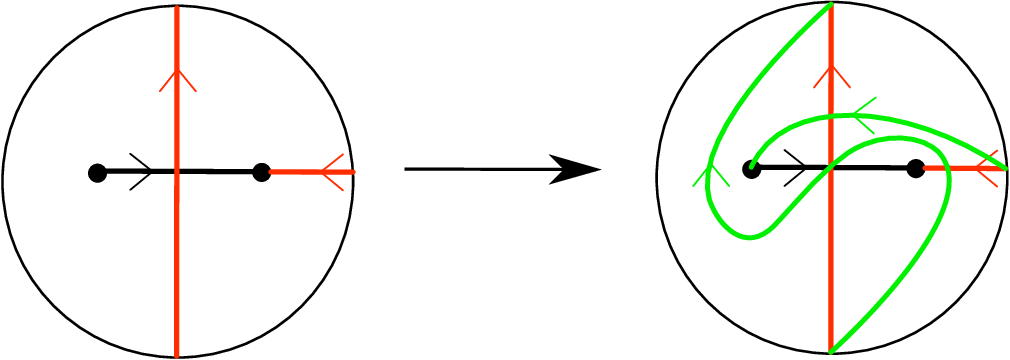}
\caption{Relations amongst half twists}\label{braid_rel_arcs_intersecting}
 \end{figure}

\item  $a$ and $b$ share both end points and $i(a,b)=0$ on the surface: If $H_a$ and $H_b$ satisfy the braid relation, then $i(a, b)=i(a, H_aH_b(a))=i(a, H_b(a))$. But Figure~\ref{braid_rel_arcs_same_end} shows that $i(a, H_b(a))=1$, a contradiction.
\end{enumerate}
 This completes the proof of assertion (ii).
\begin{figure}[hbt!]
\centering
\labellist
\small
\pinlabel $a$ at 155 82
\pinlabel $b$ at 98 82
\pinlabel $\beta$ at 15 130
\pinlabel $a$ at 460 85
\pinlabel $b$ at 395 85
\pinlabel $\beta$ at 485 130
\pinlabel $H_b(a)$ at 420 60
\pinlabel $H_b$ at 240 110
\endlabellist
\includegraphics[height=4cm]{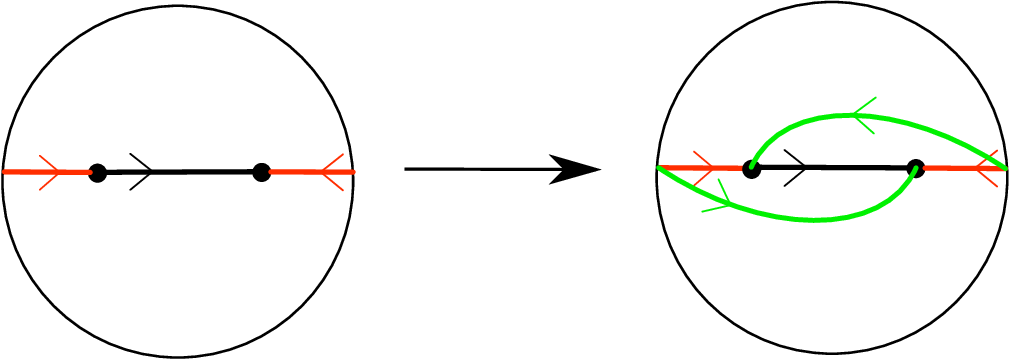}
\caption{Relations amongst half twists}\label{braid_rel_arcs_same_end}
 \end{figure}
\end{proof}
 
We now determine automorphism groups of Dehn quandles of surfaces.

\begin{lemma}\label{disjoint-curve-arc-commute}
Let $a$ be a simple closed curve and $b$ a simple closed arc in $S_{g,p}.$ Then $a$ and $b$ are disjoint in $S_{g,p}$ if and only if $T_aH_b=H_bT_a$. 
\end{lemma}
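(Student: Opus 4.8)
The plan is to prove the two implications separately, dispatching the forward direction by a standard disjoint-support argument and concentrating the work on the converse. For the forward implication, suppose $a$ and $b$ admit disjoint representatives. Since $a$ is essential it misses the punctures, so I would take an annular neighbourhood $A$ of the representative of $a$ and a twice-punctured disk neighbourhood $D_b$ of the representative of $b$ that are themselves disjoint. As $T_a$ is supported in $A$ and $H_b$ is supported in $D_b$, the two homeomorphisms have disjoint supports and hence commute, yielding $T_aH_b=H_bT_a$.

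For the converse, I would first recast the commutation relation as a conjugation identity and square it. From $T_aH_b=H_bT_a$ one gets $H_b^2T_a=T_aH_b^2$; recalling the standard relation $H_b^2=T_\beta$, where $\beta=\partial D_b$ is the boundary of a twice-punctured disk supporting the half twist, this reads $T_\beta T_a=T_aT_\beta$. As a complementary fact I would also rewrite the original relation as $H_bT_aH_b^{-1}=T_a$, so that Lemma \ref{fact2}(i) gives $T_{H_b(a)}=T_a$ and then Lemma \ref{fact1}(i) forces $H_b(a)=a$; this records that the half twist fixes the isotopy class of $a$, and it is the geometric content I ultimately want to cash in.

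The heart of the argument is locating $a$ relative to $D_b$. If $a$ and $\beta$ are non-isotopic, then $T_\beta T_a=T_aT_\beta$ together with Lemma \ref{rel_dt_curve}(i) gives $i(a,\beta)=0$, so $a$ may be isotoped off $\beta$. Since $\beta$ is separating (it bounds $D_b$), cutting along $\beta$ places the connected curve $a$ either entirely outside $D_b$, in which case $a$ is disjoint from $b\subset D_b$ and we are done, or entirely inside $D_b$. In the latter case, being essential in $S_{g,p}$, the curve $a$ cannot be null-homotopic nor encircle a single puncture, so up to isotopy it must encircle both punctures of $D_b$, i.e.\ be isotopic to $\beta$, contradicting non-isotopy. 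The remaining possibility, $a$ isotopic to $\beta$, is immediate: $\beta=\partial D_b$ is disjoint from $b$, hence so is $a$. In all cases $a$ and $b$ are disjoint.

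The main obstacle I anticipate is the confinement-and-classification step: isotoping $a$ off $\beta$ via the change-of-coordinates principle and invoking the fact that the only essential simple closed curve in a twice-punctured disk is boundary-parallel. Care is also needed to ensure that the conclusion is genuine geometric disjointness (disjoint representatives) rather than a mere vanishing of an intersection number, which is why I route the endgame through cutting along $\beta$ and positioning $a$ with respect to $D_b$ instead of arguing with $i(a,b)$ directly.
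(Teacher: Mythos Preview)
Your proof is correct and follows essentially the same route as the paper: both directions argue exactly as you do, reducing the converse to $T_aT_\beta=T_\beta T_a$ via $H_b^2=T_\beta$, deducing $i(a,\beta)=0$, and then classifying the position of $a$ relative to the twice-punctured disk $D_b$. The only cosmetic differences are that the paper does not split off the case $a\simeq\beta$ separately and does not record the auxiliary fact $H_b(a)=a$ (which you note but never actually use).
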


\begin{proof}
Since both Dehn twists and half twists are defined locally, if $a$ and $b$ are disjoint, then $T_aH_b=H_bT_a$. For the converse, suppose that  $T_aH_b=H_bT_a$. Let $\beta$ be a simple closed curve bounding a regular neighbourhood of $b$. It follows from the relation $T_aH_b=H_bT_a$ that $T_aT_{\beta}=T_{\beta}T_a$, where $T_{\beta}=H_b^2$. Consequently, we have $i(a,\beta)=0$. Now, if $a$ and $b$ lie in different components of $\overline{S_{g,p}\setminus \beta}$, then they are clearly disjoint.  And, if $a$ and $b$ lie in the same component of $\overline{S_{g,p}\setminus \beta}$ (which is a disk with two punctures), then $a$ being an essential simple closed curve must be isotopic to $\beta$. Thus, $a$ and $b$ are disjoint in this case as well.
\end{proof}

\begin{theorem}\label{Aut-Dehn-quandle-puncture}
The following hold:
\begin{enumerate}[(i)]
\item $\Inn(\dq_{g,p}^{ns})\cong \mcg_{g,p}/\Z(\mcg_{g,p})$ for all $g, p \ge 0$.
\item $\Inn(\dq_{g,p})\cong \mcg_{g,p}/\Z(\mcg_{g,p})$ for all $g, p \ge 0$.
\item  $\Aut(\Conj(\mathcal{M}_{g, p})) \cong \Aut(\mathcal{M}_{g, p})$ if $(g,p) \notin \{ (0,2), (1,0), (1,1), (1,2), (2,0) \}$.
\item  $\Aut(\dq_{g,p}^{ns}) \cong \Inn(\dq_{g,p}^{ns})$ for all $g, p \ge 0$.
\item  $\Aut(\dq_{g,p}) \cong \Inn(\dq_{g,p})$ for all $g, p \ge 0$. 
\end{enumerate}
\end{theorem}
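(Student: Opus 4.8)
The plan is to treat the five parts in the order (i)--(ii), then (iv)--(v), and finally (iii), because the automorphism computation for the Dehn quandle is the geometric heart of the matter and feeds into the statement about the conjugation quandle.

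For (i) and (ii) I would simply invoke Proposition \ref{action-corollary}. By Proposition \ref{cor:gen_Dehnquandle-with-punctures} together with Lemma \ref{generation-transtitivity}, each of $\dq_{g,p}^{ns}$ and $\dq_{g,p}$ is a subquandle of $\Conj(\mcg_{g,p})$ whose underlying set generates $\mcg_{g,p}$ as a group, so Proposition \ref{action-corollary} yields $\Inn(\dq_{g,p}^{ns})\cong \mcg_{g,p}/\Z(\mcg_{g,p})\cong \Inn(\dq_{g,p})$ at once. I would also record that under this identification the inner automorphism $S_y$ is exactly the action of the twist $\tau(y)$, so ``inner'' means ``induced by a mapping class'', which is what parts (iv) and (v) must match.

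For (iv) and (v) the containment $\Inn\le\Aut$ is automatic, so the content is that every quandle automorphism $f$ is inner. The key point is that the relation ``$a*b=a$ and $b*a=b$'' is, by Lemma \ref{rel_dt_curve}(i), Lemma \ref{rel_dt_arcs}(i) and Lemma \ref{disjoint-curve-arc-commute}, equivalent to the disjointness of $a$ and $b$; since this is phrased purely in quandle terms, $f$ preserves disjointness and hence induces a simplicial automorphism of the complex on the elements of the quandle with edges recording disjointness, namely a version of the (non-separating) curve complex, the arc complex, or the arc-and-curve complex. By the rigidity theorems for these complexes (Ivanov, Korkmaz, Irmak, Luo for curves; Irmak--McCarthy, Disarlo for arcs), away from the low-complexity exceptions this is realised by some $\phi$ in the extended mapping class group $\mcg_{g,p}^{\pm}$. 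To fix the handedness, note that an orientation-reversing $\phi$ gives $\phi_*(x*y)=\phi_*(x)*^{-1}\phi_*(y)$, whereas $f(x*y)=f(x)*f(y)$; if $\phi$ reversed orientation the Dehn quandle would be involutory, which it is not for $g\ge1$ since twists have infinite order. Hence $\phi\in\mcg_{g,p}$, so $f=\phi_*$ is inner, giving $\Aut(\dq_{g,p}^{ns})\cong\Inn(\dq_{g,p}^{ns})$ and $\Aut(\dq_{g,p})\cong\Inn(\dq_{g,p})$.

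For (iii) the inclusion $\Aut(\mcg_{g,p})\hookrightarrow\Aut(\Conj(\mcg_{g,p}))$ is automatic, as a group automorphism preserves conjugation and the two quandles share the set $\mcg_{g,p}$; the task is surjectivity when $\Z(\mcg_{g,p})=1$, which holds precisely off the listed pairs. I would first prove the algebraic fact that if $G$ is centerless and $g\in\Aut(\Conj(G))$ fixes a generating conjugacy class $\mathcal{T}$ pointwise, then $g=\id$: fixing $\mathcal{T}$ forces $g(hxh^{-1})=hg(x)h^{-1}$ for all $h\in\langle\mathcal{T}\rangle=G$, while the quandle axiom gives $g(hxh^{-1})=g(h)g(x)g(h)^{-1}$, and comparing together with surjectivity of $g$ yields $h^{-1}g(h)\in\Z(G)=1$. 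Given $f\in\Aut(\Conj(\mcg_{g,p}))$, the class $\mathcal{T}$ of right twists about non-separating curves is sent by $f$ to $\mathcal{T}$ or to its inverse class $\mathcal{T}^{-1}$; after post-composing with the orientation-reversing generator of $\Out(\mcg_{g,p})$ if needed, I may assume $f(\mathcal{T})=\mathcal{T}$. Then $f|_{\mathcal{T}}$ is an automorphism of the non-separating Dehn quandle, so by part (iv) it equals $\phi_*$ for some $\phi\in\mcg_{g,p}$, and replacing $f$ by $\phi_*^{-1}f$ the fact above gives $f=\phi_*\in\Aut(\mcg_{g,p})$.

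The main obstacle is the rigidity step and the bookkeeping it forces. I expect the real difficulties to be: (a) checking that $f$ is type-preserving, i.e. that it cannot interchange the component of arcs with that of curves, nor non-separating with separating curves, so that the correct rigidity theorem applies; (b) identifying exactly the excluded pairs $(g,p)$, since these are governed both by the failure of curve/arc-complex rigidity and by $\Z(\mcg_{g,p})\neq1$ in the small cases; and (c) in (iii), justifying that $f$ must preserve $\{\mathcal{T},\mathcal{T}^{-1}\}$, which needs a quandle-theoretic characterisation of the Dehn-twist classes, equivalently the known description of $\Aut(\mcg_{g,p})$ as $\mcg_{g,p}^{\pm}$. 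The intersection-number lemmas reduce everything else to these geometric inputs.
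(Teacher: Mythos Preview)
Your treatment of (i)--(ii) matches the paper exactly: both invoke Proposition~\ref{action-corollary} via the fact that the quandle generates $\mcg_{g,p}$.

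For (iv)--(v) your route is genuinely different. You pass to the curve/arc complex and invoke the Ivanov--Korkmaz--Luo rigidity theorems, whereas the paper never leaves the generating set: it first argues type-preservation (your difficulty (a), which the paper does handle in detail using the braid-relation structure among the Humphries-type generators), and then observes that because $\phi$ preserves the relations of Lemmas~\ref{rel_dt_curve} and~\ref{rel_dt_arcs}, the finite tuple $\phi(S)$ has the \emph{same intersection pattern} as $S$. The change-of-coordinates principle then produces a single $f\in\mcg_{g,p}$ with $f\phi|_S=\id$, so $\phi=f^{-1}$ is inner. This is lighter than complex rigidity and, crucially, it is uniform in $(g,p)$: the paper asserts (iv)--(v) for \emph{all} $g,p\ge 0$, whereas your rigidity argument carries built-in low-complexity exceptions (your difficulty (b)) that you would still have to close by hand. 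So your plan is viable in the generic range but leaves a real gap exactly where the theorem is still claimed to hold; the paper's finite-generating-set argument sidesteps this entirely.

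For (iii) the paper simply quotes the known fact that $\Z(\mcg_{g,p})=1$ off the listed pairs and then cites \cite[Corollary~1]{BardakovNasybullovSingh2019}, which gives $\Aut(\Conj(G))\cong\Aut(G)$ for any centerless $G$. Your reduction via (iv) is more circuitous and, as you note in (c), needs an a priori reason why an arbitrary $f\in\Aut(\Conj(\mcg_{g,p}))$ preserves $\{\mathcal{T},\mathcal{T}^{-1}\}$; the cited corollary avoids this by a direct algebraic argument that does not single out any conjugacy class.
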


\begin{proof}
Since the underlying sets of quandles $\dq_{g,p}^{ns}$ and $\dq_{g,p}$ generate the group $\mcg_{g,p}$, the first two isomorphisms follows from Proposition \ref{action-corollary}. We know from \cite[Section 3.4]{Farb-Margalit2012} that $\Z(\mathcal{M}_{g, p})=1$ if and only if $(g,p) \notin \{ (0,2), (1,0), (1,1), (1,2), (2,0) \}$. The third isomorphism follows from \cite[Corollary 1]{BardakovNasybullovSingh2019}.
\par
	
We now proceed to prove the last two isomorphisms. We first claim that an automorphism $\phi$ of $\dq_{g,p}^{ns}$ maps closed curves to closed curves and closed arcs to closed arcs. If $g=0$ and $p \ge 0$, then the claim holds trivially since there are no non-separating closed curves. Similarly, the claim holds trivially for $g \ge 1$ and $p=0, 1$ since there are no closed arcs. For $g \ge 1$ and $p\geq2$, a generating set for the quandle $\dq_{g,p}^{ns}$ is $$S=\{a_1,a_2,b_1,b_2,\ldots,b_g,c_1,c_2, \ldots,c_{g-1},a_{12},a_{23},\ldots,a_{(p-1)p},\sigma_{12},\sigma_{23},\ldots,\sigma_{(p-1)p}\}$$ as shown in Figure \ref{generators-with-punctures} (see \cite[Section 4.4.4]{Farb-Margalit2012}). It is enough to prove our claim for elements of this generating set. We note that no half twist form a braid relation with a Dehn twist. Suppose that there is a closed curve in $S$ such that its image under $\phi$ is a closed arc. Note that Dehn twists along any two closed curves in $S$ with geometric intersection number one satisfy the braid relation, and the subset of closed curves in $S$ forms a chain. Since $\phi$ is a quandle homomorphism, it preserves braid relations, and hence maps all closed curves in $S$ onto closed arcs. If $p=2$, then half twists about closed arcs $\phi(c_{g-1})$ and $\phi(b_g)$ satisfy the braid relation, where $c_{g-1}=a_1$ when $g=1$. Since there are only two punctures, the two arcs $\phi(c_{g-1})$ and $\phi(b_g)$ must have both the end points in common, a contradiction. Now, we assume that $p \ge 3$. Consider the set $\{\phi(c_{g-1}),\phi(b_g),\phi(a_{12}),\phi(a_{23}) \}$ of closed arcs. Then half twists about arcs $\phi(c_{g-1}),\phi(a_{12})$ and $\phi(a_{23})$ mutually commute with each other, which implies that these three arcs are mutually disjoint. At the same time, half twist about each of these three arcs satisfy the braid relation with the half twist about $\phi(b_g)$, which implies that each of the three arcs has a common end point with $\phi(b_g)$, a contradiction. Hence, $\phi$ must map closed curves to closed curves. A similar argument shows that $\phi$ maps closed arcs to closed arcs.
	
\par
If $\phi$ is any quandle automorphism of $\dq_{g,p}^{ns}$, then it preserves quandle relations as in lemmas \ref{rel_dt_curve} and \ref{rel_dt_arcs}. Thus, elements of $\phi(S)$ will have the same intersection pattern as of the starting generating set $S$. In the case $g=0$, cutting the punctured 2-sphere along the  generating simple closed arcs $\{\sigma_{12},\ldots,\sigma_{(p-1)p}\}$ and their images $\{\phi(\sigma_{12}),\ldots,\phi(\sigma_{(p-1)p})\}$, we get two homeomorphic surfaces. Gluing back the boundaries induces an orientation preserving homeomorphism $f$ of the punctured 2-sphere that maps the ordered tuple $\{\phi(\sigma_{12}),\ldots,\phi(\sigma_{(p-1)p})\}$ onto $\{\sigma_{12},\ldots,\sigma_{(p-1)p}\}$. For $g \ge 1$, we appeal to the change of co-ordinates principle \cite[Section 1.3.2]{Farb-Margalit2012}, which states that any two collections of simple closed curves with the same intersection pattern can be mapped onto each other by an orientation-preserving homeomorphism of the surface. See also \cite[Sections 4 and 8]{Ivanov}. Thus, there exists a $f\in\mathcal{M}_{g,p}$ such that the ordered tuples 
\begin{small}
$$\big(f\phi(a_1),f\phi(a_2),f\phi(b_1),\ldots,f\phi(b_g),f\phi(c_1),\ldots,f\phi(c_{g-1}), f\phi(a_{12}),\ldots,f\phi(a_{(p-1)p}) \big)$$ 
\end{small}
and $$(a_1,a_2,b_1,\ldots,b_g,c_1,\ldots,c_{g-1},a_{12},\ldots,a_{(p-1)p})$$ are the same. It remains to prove that $f\phi(\sigma_{i(i+1)})=\sigma_{i(i+1)}$ for each $1\leq i\leq p-1$. Since $\phi$ and $f$ both preserve intersection patterns of curves and arcs, so does $f \phi$. Observe that the arc $\sigma_{12}$ is disjoint from each curve in $S$ except the curve $a_{12}$. Thus, $f\phi(\sigma_{12})$ is disjoint from each curve in $S$ except the curve $f\phi(a_{12})=a_{12}$. Cutting the  surface along the curves $\{a_1,b_1,b_2,\ldots,b_g,c_1,c_2, \ldots,c_{g-1}, a_{23}\}$, we see that $\sigma_{12}$ and $f\phi(\sigma_{12})$ both lie in the same subsurface of $S_{g,p}$, which is a disk with two punctures. Hence, $\sigma_{12}$ and $f\phi(\sigma_{12})$  must be isotopic. By a similar argument, for each $2\leq i\leq p-2$, cutting the surface along the curves $\{a_{(i-1)i},b_g,a_{(i+1)(i+2)}\}$, we see that both $\sigma_{i(i+1)}$ and $f\phi(\sigma_{i(i+1)})$ lie in a disk with two punctures, and hence are isotopic. Finally, cutting the surface along the curves  $\{a_2,b_2,\ldots,b_g,c_2, \ldots,c_{g-1},a_{(p-2)(p-1)}\}$, we see that $\sigma_{(p-1)p}$ and $f\phi(\sigma_{(p-1)p})$ lie in a disk with two punctures, and therefore are isomorphic. Thus, it follows that the ordered tuples 
\begin{small}
$$\big(f\phi(a_1),f\phi(a_2),f\phi(b_1),\ldots,f\phi(b_g),f\phi(c_1),\ldots,f\phi(c_{g-1}), f\phi(a_{12}),\ldots,f\phi(a_{(p-1)p}),f\phi(\sigma_{12}),\ldots,f\phi(\sigma_{(p-1)p}) \big)$$ 
\end{small}
and 
$$(a_1,a_2,b_1,\ldots,b_g,c_1,\ldots,c_{g-1},a_{12},\ldots,a_{(p-1)p},\sigma_{12},\ldots,\sigma_{(p-1)p})$$ are the same. Thus, $f \phi$ is identity on the quandle $\dq_{g,p}^{ns}$, and hence $\phi=f^{-1}$. Since the center acts trivially on the system of simple closed curves and arcs, we can take $f \in \mathcal{M}_{g,p}/ \Z(\mathcal{M}_{g,p})$. But $\mathcal{M}_{g,p}/ \Z(\mathcal{M}_{g,p}) \cong \Inn(\mathcal{D}_{g,p}^{ns})$ by assertion (i), and hence $\phi$ is an inner automorphism. The last isomorphism follows along similar lines.
\end{proof}

As a consequence of Theorem \ref{Aut-Dehn-quandle-puncture}, we can determine connectivity of Dehn quandles of surfaces.

\begin{proposition}\label{o(dq)}
For $g \ge 1$ and $p \ge 2$, $\mathcal{D}_g^{ns}$ is connected, whereas $\mathcal{D}_g$ ($g \ge 2$), $\mathcal{D}_{g,p}$ and $\mathcal{D}_{g,p}^{ns}$ are not connected.
\end{proposition}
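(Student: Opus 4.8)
The plan is to reduce the connectivity question to counting conjugacy classes in the mapping class group. First I would recall that for a Dehn quandle $\dq(A^G)$ whose underlying set generates $G$, the inner automorphism group acts by conjugation, so its orbits---the connected components---are precisely the conjugacy classes of $G$ represented by the elements of $A$. Here $G=\mcg_{g,p}$, since by Proposition \ref{cor:gen_Dehnquandle-with-punctures} each of the quandles in question generates $\mcg_{g,p}$, and by Theorem \ref{Aut-Dehn-quandle-puncture} the inner automorphism group is $\mcg_{g,p}/\Z(\mcg_{g,p})$ acting by conjugation. Thus such a quandle is connected if and only if the mapping classes it represents form a single conjugacy class of $\mcg_{g,p}$.

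For the connectedness of $\dq_g^{ns}$, every element is the Dehn twist $T_\alpha$ about a non-separating simple closed curve $\alpha$. By Lemma \ref{fact2}(iii) the group $\mcg_g$ acts transitively on isotopy classes of non-separating simple closed curves, and by Lemma \ref{fact2}(i) we have $f T_\alpha f^{-1}=T_{f(\alpha)}$. Hence all such Dehn twists are conjugate, giving a single orbit, so $\dq_g^{ns}$ is connected.

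For the non-connected cases I would exhibit at least two distinct conjugacy classes in each quandle. For $\dq_{g,p}^{ns}$ and $\dq_{g,p}$ with $p\ge 2$, the quandle contains both a Dehn twist $T_\alpha$ about a non-separating curve (which exists for $g\ge 1$) and a half twist $H_\beta$ about a simple closed arc $\beta$ (for instance $\sigma_{12}$, which exists for $p\ge 2$). As recorded after Lemma \ref{fact2}, Dehn twists about non-separating curves form one conjugacy class of $\mcg_{g,p}$ while half twists about arcs form another; consequently these quandles have at least two components and are not connected. For the closed surface $\dq_g$ with $g\ge 2$, the quandle contains Dehn twists about both non-separating and separating simple closed curves; since an orientation-preserving homeomorphism carries separating curves to separating curves and non-separating to non-separating, Lemma \ref{fact2}(i),(iii) show these give distinct conjugacy classes, so $\dq_g$ is again not connected. (For $g=1$ there are no essential separating curves, and indeed $\dq_1=\dq_1^{ns}$ is connected.)

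The only real content beyond this bookkeeping is verifying that the candidate classes are genuinely distinct, i.e. that no element of one type is conjugate to one of another, and this is where I expect the crux to lie. It is resolved purely topologically: conjugation in $\mcg_{g,p}$ realises the $\mcg_{g,p}$-action on curves and arcs by Lemma \ref{fact2}(i),(ii), and this action preserves the type of the underlying object---curve versus arc, and separating versus non-separating---while a half twist additionally induces a transposition on the set of punctures whereas a Dehn twist fixes each puncture, so the two can never be conjugate. Collecting these observations yields the claimed connectivity of $\dq_g^{ns}$ and the failure of connectivity for $\dq_g$, $\dq_{g,p}$ and $\dq_{g,p}^{ns}$ in the stated ranges.
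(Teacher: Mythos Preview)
Your proposal is correct and follows essentially the same approach as the paper: both identify the connected components with conjugacy classes in $\mcg_{g,p}$, use transitivity of $\mcg_g$ on non-separating curves for the connected case, and distinguish separating from non-separating curves (respectively, curves from arcs) for the disconnected cases. Your write-up is somewhat more explicit in justifying why the relevant conjugacy classes are distinct, and you correctly flag the boundary case $g=1$ where $\dq_1=\dq_1^{ns}$ has no separating curves and is in fact connected.
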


\begin{proof}
It is known that given two non-separating simple closed curves $a$ and $b$ on an orientable surface, there is an element $f \in \Inn(\mathcal{D}_g^{ns}) \cong \mcg_g/\Z(\mcg_g)$ such that $f(a)=b$. Hence, $\dq_g^{ns}$ is connected. On the other hand, a non-separating curve cannot be mapped to a separating curve, and hence $\mathcal{D}_g$ is not connected. Similarly, a simple closed curve cannot be mapped to a simple closed arc by a mapping class, and hence $\mathcal{D}_{g,p}$ and $\mathcal{D}_{g,p}^{ns}$ are not connected for $p\geq 2$.
 \end{proof}
 
The next result shows that Dehn quandles determine surfaces with a few common exceptions.

\begin{proposition}\label{dehn-surface-rigid}
Let $S_g$ and $S_{g'}$ be surfaces such that $g, g' \ge 3$. Then $S_g$ is homeomorphic to $S_{g'}$  if and only if   $\mathcal{D}_g^{ns} \cong \mathcal{D}_{g'}^{ns}$.
\end{proposition}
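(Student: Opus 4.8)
The forward implication is immediate: homeomorphic closed orientable surfaces have the same genus, and $\mathcal{D}_g^{ns}$ is visibly a homeomorphism invariant of $S_g$, so $S_g\cong S_{g'}$ forces $\mathcal{D}_g^{ns}\cong\mathcal{D}_{g'}^{ns}$. The content lies in the converse, and the plan is to recover the genus from the quandle by passing to a group-theoretic invariant. First I would use that a quandle isomorphism $\phi\colon\mathcal{D}_g^{ns}\to\mathcal{D}_{g'}^{ns}$ carries $S_x$ to $S_{\phi(x)}$, and hence induces isomorphisms of both the inner automorphism groups and the full automorphism groups of the two quandles.

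Next I would identify these groups with the mapping class group. Taking $p=0$ in Theorem \ref{Aut-Dehn-quandle-puncture}, parts (iv) and (i) give $\Aut(\mathcal{D}_g^{ns})\cong\Inn(\mathcal{D}_g^{ns})\cong\mathcal{M}_g/\Z(\mathcal{M}_g)$; and, as recorded in the proof of that theorem, $\Z(\mathcal{M}_g)=1$ for $g\ge 3$. Thus $\Aut(\mathcal{D}_g^{ns})\cong\mathcal{M}_g$ and likewise $\Aut(\mathcal{D}_{g'}^{ns})\cong\mathcal{M}_{g'}$, so the assumed quandle isomorphism yields an abstract group isomorphism $\mathcal{M}_g\cong\mathcal{M}_{g'}$. (One could equally use $\Inn$ in place of $\Aut$ throughout, since $\Inn$ is already a functorial invariant of the quandle.)

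It then remains to deduce $g=g'$ from such an isomorphism, and this is the step requiring genuine input beyond the formal machinery; I regard it as the crux. The cleanest route is to invoke an isomorphism invariant of $\mathcal{M}_g$ that is strictly monotone in $g$. For a closed surface of genus at least two, two standard choices are the maximal rank $3g-3$ of an abelian subgroup of $\mathcal{M}_g$ and the virtual cohomological dimension $4g-5$ of $\mathcal{M}_g$ (see \cite{Farb-Margalit2012}); either is an invariant of the isomorphism type of the group. Equating the chosen invariant for $\mathcal{M}_g$ and $\mathcal{M}_{g'}$ forces $g=g'$, and therefore $S_g$ is homeomorphic to $S_{g'}$. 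Alternatively, one could appeal to the rigidity theorem that every isomorphism between mapping class groups of such surfaces is geometric, induced by a homeomorphism, which again returns $g=g'$.
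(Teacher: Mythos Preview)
Your proof is correct and follows the same overall strategy as the paper: pass from the quandle isomorphism to an isomorphism $\Inn(\mathcal{D}_g^{ns})\cong\Inn(\mathcal{D}_{g'}^{ns})$, identify these with $\mathcal{M}_g$ and $\mathcal{M}_{g'}$ via Theorem \ref{Aut-Dehn-quandle-puncture} and triviality of the center for $g\ge 3$, and then conclude $g=g'$. The only difference is in the final step: the paper invokes \cite[Theorem A.1]{RafiSchleimer2011} to deduce that $S_g$ and $S_{g'}$ are homeomorphic, whereas you supply concrete group-theoretic invariants (the maximal rank $3g-3$ of an abelian subgroup, or the virtual cohomological dimension $4g-5$) and also mention the rigidity route as an alternative. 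Your numerical-invariant argument is arguably more self-contained, since it avoids the heavier rigidity machinery, but both approaches are equally valid here.
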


\begin{proof}
The forward implication is obvious. For the converse, since $\mathcal{D}_g^{ns} \cong \mathcal{D}_{g'}^{ns}$, it follows from \cite[Section 3.4]{Farb-Margalit2012} and Theorem \ref{Aut-Dehn-quandle-puncture}(i) that $\mcg_g \cong \Inn(\mathcal{D}_g^{ns}) \cong \Inn(\mathcal{D}_{g'}^{ns}) \cong \mcg_{g'}$. Now, \cite[Theorem A.1]{RafiSchleimer2011} implies that $S_g$ is homeomorphic to $S_{g'}$.
\end{proof}

We note that Proposition \ref{dehn-surface-rigid} also holds for $\mathcal{D}_g$ in place of $\mathcal{D}_g^{ns}$.
\medskip

\section{Epimorphisms from Dehn quandles onto homological quandles of surfaces}\label{sec-homological-quandles}
This section is motivated by the work \cite{Yetter2003} of Yetter. Recall that an element $x$ of a group $G$ is said to be {\it primitive} if there do not exist $y\in G$ and integer $k$ with $|k| \ge 2$ such that $y^k=x$. Further, if the order of $x$ is finite, then the order of $y$ must be $k$ times the order of $x$.
\par

Let $S_g$ be a closed orientable surface of genus $g \ge 1$. For isotopy classes $a$ and $b$ of transverse, oriented, simple closed curves in $S_g$, the {\it algebraic intersection number} $\hat{i}(a,b)$ is defined as the sum of the indices of the intersection points of $a$ and $b$, where an intersection point is of index +1 when the orientation of the intersection agrees with the orientation of the surface, and is -1 otherwise. In other words, index at an intersection point is +1 if  standing on an arc of $a$, an arc of $b$ passes from left to right.

It turns out that $\hat{i}(a,b)$ depends only on homology classes (and hence isotopy classes) of $a$ and $b$. Recall that $\mathrm{H}_1(S_g, \mathbb{Z}) \cong \mathbb{Z}^{2g}$ and $\mathrm{H}_1(S_g,\mathbb{Z}_n)\cong \mathbb{Z}_n^{2g}$ for each $n\geq 2$. We view elements of these groups as row vectors. It is known that $\hat{i}(-, -)$ gives a skew-symmetric (in fact, symplectic) bilinear form on the $\mathbb{Z}$-module $\mathrm{H}_1(S_g,\mathbb{Z})$ (see \cite[Section 6.1.2]{Farb-Margalit2012}). Then, as in \cite[Example 4]{Yetter2003}, the binary operation given by
\begin{equation}
x*y=x + \hat{i}(x,y) y
\end{equation}
for $x, y \in \mathrm{H}_1(S_g, \mathbb{Z})$, gives a quandle structure on $\mathrm{H}_1(S_g, \mathbb{Z})$. 
\par

Let $\mathcal{P}'_g$ denote the set of all primitive elements in $\mathrm{H}_1(S_g, \mathbb{Z})$ and $\mathcal{P}'_{g,n}$ denote the set of all primitive elements in $\mathrm{H}_1(S_g,\mathbb{Z}_n)$ for each $n\geq 2$. If $x, y \in \mathcal{P}'_g$, then $x*y \in \mathcal{P}'_g$. For, if $x*y$ is not primitive, then $x=(x*y)*^{-1}y$ is not primitive, a contradiction. Thus, $(\mathcal{P}'_g, *)$ is a subquandle of $( \mathrm{H}_1(S_g, \mathbb{Z}), *)$. A similar argument shows that the binary operation
\begin{equation}
x*y=x + \hat{i}(x,y) y \pmod n
\end{equation}
defines a quandle structure on $\mathrm{H}_1(S_g,\mathbb{Z}_n)$ containing $\mathcal{P}'_{g,n}$ as its subquandle. Further, for each $n \ge 2$, there is a surjective quandle homomorphism $\phi_n': \mathcal{P}'_g \to \mathcal{P}'_{g,n}$ given by $\phi_n'(x)=x\pmod n$, the reduction of $x$ modulo $n$.
\par
Consider the natural action of $\mathbb{Z}_2=\{1,-1\}$ on both $\mathcal{P}'_g$ and $\mathcal{P}'_{g,n}$. Let us set $\mathcal{P}_g:=\mathcal{P}'_g/\mathbb{Z}_2$ and $\mathcal{P}_{g,n}:=\mathcal{P}'_{g,n}/\mathbb{Z}_2$. It is clear that the same bilinear form $\hat{i}(-,-)$ gives quandle structures on $\mathcal{P}_g$ and $\mathcal{P}_{g,n}$. Further, $\phi_n'$ induces a surjective quandle homomorphism $\phi_n: \mathcal{P}_g \to \mathcal{P}_{g,n}$. We refer the quandles $\mathcal{P}_g$ and  $\mathcal{P}_{g,n}$ as {\it projective primitive homological quandles} of $S_g$.
\par

It is well-known \cite[Chapter 6]{Farb-Margalit2012} that there exists a surjective group homomorphism $$\Psi: \mathcal{M}_g\to \mathrm{Sp}(2g,\mathbb{Z})$$ induced by the action of the mapping classes on the first homology $\mathrm{H}_1(S_g, \mathbb{Z})$. For each $n \ge 2$, let  $$\Psi_n: \mathcal{M}_g\to \mathrm{Sp}(2g,\mathbb{Z}_n)$$ be the composition of $\Psi$ with the reduction modulo $n$ homomorphism. For the isotopy class $a$ of an oriented simple closed curve in $S_g$, we denote by $[a] \in \mathrm{H}_1(S_g, \mathbb{Z})$ its homology class. The main result of this section is the following.

\begin{theorem}\label{thm:homologicalquandle}
For each $g \ge 1$ and $n \ge 2$, there exist surjective quandle homomorphisms $$\mathcal{D}_g^{ns} \to \mathcal{P}_g$$ and $$\mathcal{D}_g^{ns} \to \mathcal{P}_{g, n}.$$
\end{theorem}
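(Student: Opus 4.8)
The plan is to build the homomorphism onto $\mathcal{P}_g$ directly from the assignment of homology classes, and then obtain the map onto $\mathcal{P}_{g,n}$ by post-composing with the reduction homomorphism $\phi_n$ already constructed above. Concretely, I would fix an orientation on each non-separating simple closed curve, send its isotopy class $a$ to the homology class $[a]\in \mathrm{H}_1(S_g,\mathbb{Z})$, and then pass to the $\mathbb{Z}_2$-orbit $\{[a],-[a]\}$. This defines a map
$$f:\mathcal{D}_g^{ns}\to \mathcal{P}_g, \qquad f(a)=\{\pm[a]\},$$
which is insensitive to the chosen orientation precisely because reversing orientation negates the homology class.

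First I would check that $f$ is well defined and lands in $\mathcal{P}_g$. Since $a$ is a non-separating simple closed curve, its homology class is a primitive element of $\mathrm{H}_1(S_g,\mathbb{Z})$ (a standard fact, see \cite[Section 6.1]{Farb-Margalit2012}), so $\{\pm[a]\}\in\mathcal{P}_g$. To see that $f$ is a quandle homomorphism, I would invoke the Picard--Lefschetz formula describing the action of a right-hand Dehn twist on homology,
$$[T_b(a)]=[a]+\hat{i}([a],[b])\,[b]$$
(see \cite[Section 6.3]{Farb-Margalit2012}), whose sign convention matches the quandle operation $x*y=x+\hat{i}(x,y)\,y$ on $\mathcal{P}_g$. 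Since the operation on $\mathcal{D}_g^{ns}$ is $a*b=T_b(a)$, this gives
$$f(a*b)=\big\{\pm\big([a]+\hat{i}([a],[b])[b]\big)\big\}=f(a)*f(b),$$
where the last equality uses that the operation $x*y$ descends to the quotient by $\mathbb{Z}_2$, which in turn follows from the identities $(-x)*(-y)=-(x*y)$ and $x*(-y)=x*y$.

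For surjectivity onto $\mathcal{P}_g$ I would use the converse realization fact: every primitive class in $\mathrm{H}_1(S_g,\mathbb{Z})$ is represented by a non-separating simple closed curve. This follows from the transitivity of the $\mathrm{Sp}(2g,\mathbb{Z})$-action on primitive vectors together with the surjectivity of $\Psi:\mathcal{M}_g\to\mathrm{Sp}(2g,\mathbb{Z})$ and Lemma~\ref{fact2}(iii), or directly from the change of coordinates principle \cite[Section 1.3.2]{Farb-Margalit2012}. Finally, composing $f$ with the surjective quandle homomorphism $\phi_n:\mathcal{P}_g\to\mathcal{P}_{g,n}$ yields the desired surjection $\mathcal{D}_g^{ns}\to\mathcal{P}_{g,n}$. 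The only genuinely delicate point is bookkeeping the sign conventions: because a sign discrepancy in the Picard--Lefschetz formula is \emph{not} absorbed by the $\mathbb{Z}_2$-quotient, one must ensure that the twist formula, the definition of $\hat{i}$, and the quotient are all aligned so that $f$ is literally a homomorphism rather than one only up to sign. Everything else is a routine application of standard facts about homology classes of simple closed curves from \cite{Farb-Margalit2012}.
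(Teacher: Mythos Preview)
Your proposal is correct and follows essentially the same route as the paper: define $a\mapsto\{\pm[a]\}$, check it lands in $\mathcal{P}_g$ via primitivity of the homology class of a non-separating curve, verify the homomorphism property from the Picard--Lefschetz formula $[T_b(a)]=[a]+\hat{i}([a],[b])[b]$ (Proposition~6.3 of \cite{Farb-Margalit2012}), deduce surjectivity from the realization of primitive classes by simple closed curves, and compose with $\phi_n$ for the second map. The only cosmetic difference is that the paper first selects a canonical lift in $\mathcal{P}'_g$ (the one whose first nonzero coordinate is positive) before passing to the quotient, whereas you go straight to the $\mathbb{Z}_2$-orbit; your explicit remark that a sign discrepancy in the twist formula would \emph{not} be absorbed by the quotient is a useful caution that the paper leaves implicit.
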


\begin{proof}
Recall that $\mathcal{D}_g^{ns}$ consists of isotopy classes of unoriented simple closed curves. Thus, for each $a \in \mathcal{D}_g^{ns}$, there are two choices for the homology class $[a] \in \mathcal{P}_g'$. We choose $[a]$ such that the entry in its first non-zero coordinate (from left) is positive. By \cite[Proposition 6.2]{Farb-Margalit2012}, a non-zero element of $\mathrm{H}_1(S_g, \mathbb{Z})$ is primitive  if and only if  it is represented by an oriented simple closed curve. Composing  the map $\mathcal{D}_g^{ns} \to \mathcal{P}'_g$ given by $a \mapsto [a]$ with the quotient map $\mathcal{P}'_g \to \mathcal{P}_g$, we get a surjective map $$\phi: \mathcal{D}_g^{ns} \to \mathcal{P}_g.$$ It remains to show that $\phi$ is a quandle homomorphism. If $a, b \in \mathcal{D}_g^{ns}$, then 
\begin{equation*}\label{symplectic-eq-1}
\phi(a*b)= \phi(T_b(a))=[T_b(a)]=\Psi(T_b)([a])
\end{equation*}
and
\begin{equation*}\label{symplectic-eq-2}
\phi(a)*\phi(b)=[a]*[b]=[a] +\hat{i}(a, b) [b].
\end{equation*}
By \cite[Proposition 6.3]{Farb-Margalit2012}, we have $\Psi(T_b)([a])=[a]+\hat{i}(a,b)[b]$, and hence $\phi$ is a quandle homomorphism. Finally, the composition $\phi_n\phi:\mathcal{D}_g^{ns}\to \mathcal{P}_{g,n}$ gives the desired second homomorphism.
\end{proof}

\begin{corollary}\label{cor:involutory_quandle}
For each $g \ge 1$, $\mathcal{D}_g^{ns}$ has an involutory quandle of size $2^{2g}-1$ as its quotient.
\end{corollary}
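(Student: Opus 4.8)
The plan is to specialise the preceding Theorem \ref{thm:homologicalquandle} to the case $n=2$. That theorem already produces a surjective quandle homomorphism $\mathcal{D}_g^{ns} \to \mathcal{P}_{g,2}$, so $\mathcal{P}_{g,2}$ is a quotient quandle of $\mathcal{D}_g^{ns}$ and nothing further needs to be said about the map itself. Thus the entire task reduces to two elementary verifications about the target quandle $\mathcal{P}_{g,2}$: that it has exactly $2^{2g}-1$ elements, and that it is involutory.

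First I would compute the cardinality. Recall $\mathrm{H}_1(S_g,\mathbb{Z}_2)\cong \mathbb{Z}_2^{2g}$, an elementary abelian $2$-group of exponent $2$. I claim every nonzero element is primitive: a nonzero $x$ has order $2$, so a factorisation $x=ky$ with $|k|\ge 2$ would, by the order condition in the definition of primitivity, force the order of $y$ to equal $|k|$ times the order of $x$, hence to be at least $4$, which is impossible in a group of exponent $2$. Therefore $\mathcal{P}'_{g,2}=\mathbb{Z}_2^{2g}\setminus\{0\}$ has $2^{2g}-1$ elements. Moreover, over $\mathbb{Z}_2$ the acting subgroup $\{1,-1\}$ is trivial, since $-1\equiv 1\pmod 2$, so the quotient $\mathcal{P}_{g,2}=\mathcal{P}'_{g,2}/\mathbb{Z}_2$ coincides with $\mathcal{P}'_{g,2}$ and retains the same size $2^{2g}-1$.

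Next I would check that $\mathcal{P}_{g,2}$ is involutory by showing $(x*y)*y=x$ for all $x,y$, using the operation $x*y=x+\hat{i}(x,y)\,y \pmod 2$. Since the form is skew-symmetric we have $\hat{i}(y,y)=0$, so bilinearity gives $\hat{i}\!\left(x+\hat{i}(x,y)y,\,y\right)=\hat{i}(x,y)$, whence $(x*y)*y = x + \hat{i}(x,y)\,y + \hat{i}(x,y)\,y = x + 2\hat{i}(x,y)\,y \equiv x \pmod 2$. Thus each inner automorphism $S_y$ squares to the identity, which is exactly the condition $x*^{-1}y=x*y$, so $\mathcal{P}_{g,2}$ is involutory. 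Combining the two verifications with the surjection from Theorem \ref{thm:homologicalquandle} yields the corollary.

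The argument is entirely routine and I do not anticipate any genuine obstacle; the only point requiring a moment's care is the reduction of primitivity to nontriviality over $\mathbb{Z}_2$, which rests on invoking the order condition in the definition of a primitive element rather than merely the existence of a nontrivial power. Once that is observed, both the count $2^{2g}-1$ and the involutory identity fall out immediately.
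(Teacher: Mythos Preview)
Your proposal is correct and follows essentially the same approach as the paper: specialise Theorem \ref{thm:homologicalquandle} to $n=2$ and observe that every nonzero element of $\mathrm{H}_1(S_g,\mathbb{Z}_2)$ is primitive. The paper's proof is a single sentence that leaves the cardinality count, the triviality of the $\mathbb{Z}_2$-action, and the involutory verification implicit; you have simply spelled these out carefully, which is helpful but does not constitute a different route.
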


\begin{proof}
Since each element of $\mathrm{H}_1(S_g,\mathbb{Z}_2)$ other than the trivial element is primitive, the assertion follows from Theorem \ref{thm:homologicalquandle}.
\end{proof}

\begin{corollary}\label{cor:involutory_quandle-separating}
For each $g \ge 1$, $\mathcal{D}_g$ has an involutory quandle of size $2^{2g}$ as its quotient.
\end{corollary}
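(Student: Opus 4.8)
The plan is to replace the target of Theorem~\ref{thm:homologicalquandle} by the \emph{full} mod-two homology rather than just its primitive part. Concretely, I would work with $V_g := \mathrm{H}_1(S_g,\mathbb{Z}_2) \cong \mathbb{Z}_2^{2g}$ equipped with the operation $x*y = x + \hat{i}(x,y)\,y \pmod 2$, where $\hat{i}$ now denotes the mod-two reduction of the symplectic intersection form. Since every non-zero element of $\mathbb{Z}_2^{2g}$ is primitive and the $\mathbb{Z}_2 = \{1,-1\}$ action is trivial modulo $2$, one has $V_g = \mathcal{P}_{g,2} \cup \{0\}$, so $V_g$ has exactly $2^{2g}$ elements and contains the involutory quandle $\mathcal{P}_{g,2}$ of Corollary~\ref{cor:involutory_quandle} as the complement of the single extra point $0$. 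First I would record that $V_g$ is an involutory quandle: idempotence is immediate from $\hat{i}(x,x)=0$, self-distributivity is the same computation already used for $\mathcal{P}_{g,n}$ (as in \cite[Example 4]{Yetter2003}), and the involutory property is the short check that $S_y^2(x) = x + 2\,\hat{i}(x,y)\,y = x \pmod 2$.

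Next I would extend the homomorphism $\phi$ of Theorem~\ref{thm:homologicalquandle} to all of $\mathcal{D}_g$ by the same rule $a \mapsto [a] \pmod 2$. Over $\mathbb{Z}_2$ there is no orientation ambiguity, since $[a] \equiv -[a]$, so the assignment is well defined on unoriented isotopy classes, and a separating simple closed curve is null-homologous and hence maps to $0$. To see that $\phi$ remains a quandle homomorphism on the larger quandle $\mathcal{D}_g$, I would invoke the transvection formula $\Psi(T_b)([a]) = [a] + \hat{i}(a,b)[b]$ from \cite[Proposition 6.3]{Farb-Margalit2012}, exactly as in the proof of Theorem~\ref{thm:homologicalquandle}. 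The point to verify is that this identity also covers a separating curve $b$: there $[b]=0$ and $T_b$ acts trivially on homology, so $\phi(a*b)=\phi(T_b(a))=[a]=\phi(a)=\phi(a)*\phi(b)$, even though $T_b$ need not fix the isotopy class $a$ inside $\mathcal{D}_g$. Thus $\phi$ is a surjection onto its image regardless of whether the second argument is separating or not.

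For surjectivity, the restriction of $\phi$ to $\mathcal{D}_g^{ns}$ already covers $\mathcal{P}_{g,2}=V_g\setminus\{0\}$ by Corollary~\ref{cor:involutory_quandle}, so the only genuinely new point to reach is $0$, realised by the class of any separating simple closed curve. The hard part, and the one new ingredient beyond Corollary~\ref{cor:involutory_quandle}, is precisely this realisation of $0$: it accounts for the count $2^{2g}$ versus $2^{2g}-1$, but it requires a null-homologous essential simple closed curve, which exists once the surface carries a separating curve (that is, for $g \ge 2$). The delicate case is therefore the genus-one torus, where $\mathcal{D}_1 = \mathcal{D}_1^{ns}$ is connected and carries no separating curve, so $0$ cannot be hit by a curve class; this boundary case is exactly where one must argue with care, and it is the main obstacle in the argument.
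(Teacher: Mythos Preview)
Your approach is precisely the one the paper intends: its proof consists of the single sentence ``Since each separating simple closed curve represents the trivial element in homology, the result now follows from Theorem~\ref{thm:homologicalquandle},'' which is exactly your extension of $\phi$ to all of $\mathcal{D}_g$ by sending separating curves to $0\in\mathrm{H}_1(S_g,\mathbb{Z}_2)$. Your checks that $V_g$ is an involutory quandle, that the transvection formula still gives a homomorphism when $b$ is separating, and that surjectivity onto $V_g\setminus\{0\}$ comes from Corollary~\ref{cor:involutory_quandle} are all correct and simply fill in details the paper omits.

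Your hesitation about $g=1$ is not over-caution; it is a genuine defect in the statement that the paper's one-line proof does not address. On the torus there are no essential separating curves, so $\mathcal{D}_1=\mathcal{D}_1^{ns}$ and the image of your map is only $\mathcal{P}_{1,2}$, of size $3$. In fact no involutory quotient of size $4$ can exist: any surjection from $\mathcal{D}_1$ onto an involutory quandle factors through the universal involutory quotient $(\mathcal{D}_1^{ns})_2$, and the paper's own Theorem~\ref{inv-dehn-surface} shows that $(\mathcal{D}_1^{ns})_2=\{a_0,a_1,a_0*a_1\}$ has exactly three elements (at least three by Corollary~\ref{cor:involutory_quandle}, at most three by the explicit list). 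So for $g=1$ every involutory quotient of $\mathcal{D}_1$ has at most three elements, and the corollary as stated fails there. Your argument is complete and correct for $g\ge 2$; the obstacle you flag at $g=1$ is not something to ``argue with care'' around but rather an error in the hypothesis of the corollary.
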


\begin{proof}
Since each separating simple closed curve represents the trivial element in homology, the result now follows from Theorem \ref{thm:homologicalquandle}.
\end{proof}
 
\begin{remark}
We make two remarks here.
\begin{enumerate}[(i)]
\item The definition of Dehn quandle of an orientable surface considered in \cite{Yetter2003} includes the isotopy class of trivial simple closed curves (non-essential curves). Mapping the isotopy class of trivial curves to a one element trivial quandle disjoint from $\mathrm{H}_1(S_g,\mathbb{Z}_2)$ and using Corollary \ref{cor:involutory_quandle-separating}, we obtain a generalisation of a similar result of Yetter \cite[Proposition 23]{Yetter2003} considering the genus two case.
\item The surjection in Corollary \ref{cor:involutory_quandle} in fact descends to a surjective quandle homomorphism from $(\mathcal{D}_g^{ns})_2$ onto an involutory quandle of size $2^{2g}-1$, where $(\mathcal{D}_g^{ns})_2$ is the canonical involutory quotient of $\mathcal{D}_g^{ns}$. 
\end{enumerate}
\end{remark}

\begin{lemma}\label{symp_property}
If $a, b\in \mathcal{D}_g^{ns}$ and $q$ is a prime, then the following hold:
	\begin{enumerate}[(i)]
		\item $\Psi(T_a)=\Psi(T_b)$ if and only if $[a]=[b]$  in $\mathcal{P}_g$. 
		\item $\Psi_q(T_a)=\Psi_q(T_b)$ if and only if $[a]=[b]$ in $\mathcal{P}_{g,q}$.
	\end{enumerate}
\end{lemma}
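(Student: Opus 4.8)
The plan is to use the transvection description of the symplectic representation on homology. By \cite[Proposition 6.3]{Farb-Margalit2012}, already invoked in the proof of Theorem \ref{thm:homologicalquandle}, the action of a Dehn twist on $\mathrm{H}_1(S_g,\mathbb{Z})$ is the symplectic transvection
$$\Psi(T_a)(v) = v + \hat{i}(v,[a])\,[a] \qquad \text{for all } v \in \mathrm{H}_1(S_g,\mathbb{Z}),$$
and reducing modulo $q$ gives the analogous formula for $\Psi_q(T_a)$ on $\mathrm{H}_1(S_g,\mathbb{Z}_q)=\mathbb{Z}_q^{2g}$. I would treat (i) and (ii) in parallel, the only essential difference being that in (ii) the ambient ring is the field $\mathbb{Z}_q$. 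For the (easy) backward implications, note that if $[a]=\pm[b]$ then bilinearity gives $\hat{i}(v,-[b])(-[b])=\hat{i}(v,[b])[b]$, so the two transvections coincide as maps and $\Psi(T_a)=\Psi(T_b)$ (resp. mod $q$).

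For the forward implication in (i), assume $\Psi(T_a)=\Psi(T_b)$. Subtracting the two transvection formulas gives
$$\hat{i}(v,[a])\,[a] = \hat{i}(v,[b])\,[b] \qquad \text{for all } v \in \mathrm{H}_1(S_g,\mathbb{Z}).$$
Since $a$ is non-separating, $[a]$ is primitive, hence nonzero; because $\hat{i}$ is unimodular (its Gram matrix in a symplectic basis has determinant $1$, so it is non-degenerate over $\mathbb{Z}$ and over every $\mathbb{Z}_n$), I can pick $v_0$ with $c:=\hat{i}(v_0,[a])\neq 0$. Then $c[a]=\hat{i}(v_0,[b])\,[b]\neq 0$, which forces $\hat{i}(v_0,[b])\neq 0$ and hence $[b]$ to be a $\mathbb{Q}$-multiple of $[a]$; primitivity of both vectors then yields $[b]=\pm[a]$, i.e. $[a]=[b]$ in $\mathcal{P}_g$.

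The forward implication in (ii) is the crux, and it is where primality of $q$ must enter. Working over the field $\mathbb{Z}_q$, the same subtraction yields $\hat{i}(v,[a])[a]=\hat{i}(v,[b])[b]$ for all $v\in\mathbb{Z}_q^{2g}$, and the reductions of $[a]$ and $[b]$ remain nonzero because a primitive integer vector cannot have all entries divisible by $q$. Non-degeneracy over the field produces $v_0$ with $\hat{i}(v_0,[a])\neq 0$, so $[b]=\lambda[a]$ for some $\lambda\in\mathbb{Z}_q^{\times}$. Substituting $[b]=\lambda[a]$ back into the identity and using $[a]\neq 0$ gives $\hat{i}(v,[a])=\lambda^{2}\,\hat{i}(v,[a])$ for all $v$, hence $\lambda^{2}=1$ in $\mathbb{Z}_q$; since $\mathbb{Z}_q$ is a field, $\lambda=\pm 1$, which says exactly that $[a]=[b]$ in $\mathcal{P}_{g,q}$.

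I expect this final step to be the main obstacle: over a composite modulus the equation $\lambda^{2}=1$ can have solutions other than $\pm1$, so the conclusion genuinely relies on $q$ being prime (so that $\mathbb{Z}_q$ is an integral domain) together with the primitivity of $[a]$, which guarantees that its reduction stays nonzero and that the transvection vector is not annihilated. The remaining computations are routine applications of bilinearity and non-degeneracy of $\hat{i}$.
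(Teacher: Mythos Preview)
Your argument is correct and follows essentially the same route as the paper: both proofs use the transvection formula from \cite[Proposition 6.3]{Farb-Margalit2012}, subtract to obtain $\hat{i}(v,[a])[a]=\hat{i}(v,[b])[b]$ for all $v$, specialise $v$ to force $[b]$ to be a scalar multiple of $[a]$, and then use primitivity together with the fact that $\lambda^2=1$ has only the solutions $\pm 1$ in a field to conclude. The only cosmetic difference is that the paper selects a dual \emph{curve} $z'$ with $\hat{i}(z',b)=1$ via the change-of-coordinates principle, whereas you invoke non-degeneracy of the symplectic form to find a homology class $v_0$ with $\hat{i}(v_0,[a])\neq 0$; these are equivalent choices here.
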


\begin{proof}
By \cite[Proposition 6.3]{Farb-Margalit2012}, we have $\Psi(T_a)[z]=[z]+\hat i(z,a)[a]$. Since $\hat{i}(z,a)$ depends only on homology classes of $z$ and $a$, it follows that if $[a]=[b]$, then $\Psi(T_a)=\Psi(T_b)$. For the converse, suppose that $\Psi(T_a)=\Psi(T_b)$. Then $[z]+\hat{i}(z,a)[a]=[z]+\hat{i}(z,b)[b]$, and hence $\hat{i}(z,a)[a]=\hat{i}(z,b)[b]$ for each simple closed curve $z$ in $S_g$. Choose a simple closed curve $z'$ such that $\hat{i}(z',b)=1$ (which always exists by the coordinate change principle). This gives $ [b]=\hat{i}(z',a)[a]$. Since $b$ is primitive, it follows that $\hat{i}(z',a)=\pm 1$,  and hence $[a]=[b]$ in $\mathcal{P}_g$, which proves assertion (i).
	\par
By definition, $\Psi_q(T_a)=\Psi(T_a)\pmod q$. Again, \cite[Proposition 6.3]{Farb-Margalit2012} gives  $\Psi_q(T_a)[z]=[z]+\hat{i}(z,a)[a] \pmod q$. If $[a]=[b]$ in $\mathcal{P}_{g,q}$, then $[a]=[b]+qk[v]$ in $\mathcal{P}_g$ for some integer $k$ and $[v] \in \mathrm{H}_1(S_g, \mathbb{Z})$.  Since $\hat{i}$ is bilinear, we have $\hat{i}(z,a)=\hat{i}(z,b)+qk\hat{i}(z,v)$. This implies that $\hat{i}(z,a)=\hat{i}(z,b)\pmod q$ for each simple closed curve $z$, and hence $\Psi_q(T_a)=\Psi_q(T_b)$. Conversely, suppose that $\Psi_q(T_a)=\Psi_q(T_b)$. This gives 
\begin{equation}\label{intersection-homological-equation}
\hat{i}(z,a)[a]=\hat{i}(z,b)[b] \pmod q
\end{equation}
in  $\mathcal{P}_{g,q}$ for each simple closed curve $z$ in $S_g$. Choosing a curve $z'$ such that $\hat{i}(z',b)=1$ and using \eqref{intersection-homological-equation}, we get 
 \begin{equation}\label{intersection-homological-equation3}
 [b]=\hat{i}(z',a)[a] \pmod q
 \end{equation}
 Since $[a]$ and $[b]$ are primitive elements, it follows that $\hat{i}(z',a)\in \mathbb{Z}_q^\times$. Substituting $[b]=\hat{i}(z',a)[a] \pmod q$ in \eqref{intersection-homological-equation}, we obtain 
\begin{equation}\label{intersection-homological-equation2}
\hat{i}(z,a)[a] =\hat{i}(z',a)^2 \hat{i}(z,a)[a]\pmod q
\end{equation}
 for each simple closed curve $z$. By choosing a simple closed curve $z''$ such that $\hat{i}(z'',a)=1$ and using \eqref{intersection-homological-equation2}, we get $ \hat{i}(z',a)^2=1 \pmod q$. Since $q$ is prime, $\mathbb{Z}_q^\times$ is cyclic, and hence $\hat{i}(z',a)=\pm 1$. Plugging this in \eqref{intersection-homological-equation3} gives $[a]=[b]$ in $\mathcal{P}_{g,q}$.
\end{proof}

We conclude by showing that projective primitive homological quandles are simply Dehn quandles of corresponding symplectic groups.

\begin{proposition}\label{homological-are-dehn-of-symplectic}
Let $a$ be a non-separating simple closed curve in $S_g$, where $g \ge 1$ and $q$ a prime. Then the following hold:
\begin{enumerate}[(i)]
\item $\mathcal{P}_{g}\cong\mathcal{D}(\Psi(T_a)^{\text{Sp}(2g,\mathbb{Z})})$.
\item $\mathcal{P}_{g,q}\cong\mathcal{D}(\Psi_q(T_a)^{\text{Sp}(2g,\mathbb{Z}_q)})$.
\end{enumerate}
\end{proposition}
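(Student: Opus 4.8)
The plan is to realise both homological quandles as images of the non-separating Dehn quandle $\mathcal{D}_g^{ns}$ under the symplectic representation, and then to identify these images with the projective primitive homological quandles using the machinery already established. I shall carry out assertion (i) in detail; assertion (ii) then follows verbatim after replacing $\Psi$, $\mathrm{Sp}(2g,\mathbb{Z})$ and $\mathcal{P}_g$ by $\Psi_q$, $\mathrm{Sp}(2g,\mathbb{Z}_q)$ and $\mathcal{P}_{g,q}$, and Lemma \ref{symp_property}(i) by Lemma \ref{symp_property}(ii).

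First I would introduce the map $\beta:\mathcal{D}_g^{ns}\to\mathcal{D}(\Psi(T_a)^{\mathrm{Sp}(2g,\mathbb{Z})})$ given by $\beta(b)=\Psi(T_b)$. Since $\mcg_g$ acts transitively on non-separating simple closed curves (Lemma \ref{fact2}(iii)), every $T_b$ is conjugate to $T_a$ in $\mcg_g$, so $\Psi(T_b)$ is conjugate to $\Psi(T_a)$ in $\mathrm{Sp}(2g,\mathbb{Z})$ and $\beta$ genuinely lands in the prescribed conjugacy class. That $\beta$ is a quandle homomorphism is a one-line computation using Lemma \ref{fact2}(i): $\beta(b*c)=\Psi(T_{T_c(b)})=\Psi(T_cT_bT_c^{-1})=\Psi(T_c)\Psi(T_b)\Psi(T_c)^{-1}=\beta(b)*\beta(c)$, the final $*$ being conjugation in $\mathrm{Sp}(2g,\mathbb{Z})$. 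For surjectivity I would exploit that $\Psi$ is surjective: writing an arbitrary element of the conjugacy class as $g\,\Psi(T_a)\,g^{-1}$ with $g=\Psi(f)$ for some $f\in\mcg_g$, one gets $g\,\Psi(T_a)\,g^{-1}=\Psi(fT_af^{-1})=\Psi(T_{f(a)})=\beta(f(a))$, where $f(a)\in\mathcal{D}_g^{ns}$ since orientation-preserving homeomorphisms preserve the non-separating type.

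The final step is to collapse $\beta$ through the homological map. Let $\phi:\mathcal{D}_g^{ns}\to\mathcal{P}_g$, $b\mapsto[b]$, be the surjective quandle homomorphism of Theorem \ref{thm:homologicalquandle}. By Lemma \ref{symp_property}(i), $\Psi(T_b)=\Psi(T_c)$ holds precisely when $[b]=[c]$ in $\mathcal{P}_g$; in other words, $\beta$ and $\phi$ have exactly the same fibres. Consequently there is a unique bijection $\Theta:\mathcal{P}_g\to\mathcal{D}(\Psi(T_a)^{\mathrm{Sp}(2g,\mathbb{Z})})$ with $\Theta\circ\phi=\beta$, namely $\Theta([b])=\Psi(T_b)$; it is a quandle homomorphism since $\Theta([b]*[c])=\Theta(\phi(b*c))=\beta(b*c)=\beta(b)*\beta(c)=\Theta([b])*\Theta([c])$, and being a bijection it is an isomorphism. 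The only genuinely delicate inputs are the surjectivity of $\Psi$ (respectively of $\Psi_q$, where I would invoke surjectivity of the reduction homomorphism $\mathrm{Sp}(2g,\mathbb{Z})\to\mathrm{Sp}(2g,\mathbb{Z}_q)$), needed to realise every symplectic conjugation as conjugation by a mapping class, and Lemma \ref{symp_property}, which ensures that the fibres of $\beta$ coincide with those of $\phi$; the remaining verifications are purely formal.
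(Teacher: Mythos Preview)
Your proposal is correct and follows essentially the same strategy as the paper: define the map $[b]\mapsto\Psi(T_b)$, use Lemma~\ref{symp_property} for well-definedness and injectivity, use conjugacy of Dehn twists along non-separating curves for the image to lie in the right conjugacy class, and use surjectivity of $\Psi$ (equivalently, that the image generates $\mathrm{Sp}(2g,\mathbb{Z})$) for surjectivity onto the Dehn quandle. The only cosmetic difference is that you first build $\beta$ on $\mathcal{D}_g^{ns}$ and then descend via $\phi$, whereas the paper defines $\lambda:\mathcal{P}_g\to\Conj(\mathrm{Sp}(2g,\mathbb{Z}))$ directly; the content is the same.
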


\begin{proof}
It follows from \cite[Proposition 6.2]{Farb-Margalit2012} that each element of $\mathcal{P}_g$ is represented by a non-separating simple closed curve in $S_g$ (without orientation). Define $\lambda:\mathcal{P}_g \to \Conj(\text{Sp}(2g,\mathbb{Z}))$ by $\lambda([b])=\Psi(T_b)$ for each $[b] \in \mathcal{P}_{g}$. It follows from Lemma \ref{symp_property}(i) that $\lambda$ is well-defined and injective. Further, 
$$\lambda([a]*[b])=\lambda([T_b(a)])=\Psi(T_{T_b(a)})=\Psi(T_bT_aT_b^{-1})=\Psi(T_b)\Psi(T_a)\Psi(T_b)^{-1}$$
and
$$\lambda([a])*\lambda([b])=\Psi(T_a)*\Psi(T_b)=\Psi(T_b)\Psi(T_a)\Psi(T_b)^{-1},$$ 
and hence $\mathcal{P}_g\cong\{\Psi(T_b)\mid b\in \mathcal{D}_g^{ns}\}\subset \Conj(\text{Sp}(2g,\mathbb{Z}))$. Since, Dehn twists along any two non-separating  simple closed curves are conjugate in $\mathcal{M}_g$, their images under $\Psi$ are also conjugate in $\text{Sp}(2g,\mathbb{Z})$. Further, since $\mathcal{M}_g$ is generated by $\mathcal{D}_g^{ns}$, it follows that $\text{Sp}(2g,\mathbb{Z})$ is generated by $\{\Psi(T_b) \mid b\in \mathcal{D}_g^{ns}\}$. Hence, $\mathcal{P}_{g}\cong\mathcal{D}(\Psi(T_a)^{\text{Sp}(2g,\mathbb{Z})})$ for a non-separating simple closed curve $a$, which is assertion (i). 
\par	

Define $\lambda_q:\mathcal{P}_{g,q} \to \Conj(\text{Sp}(2g,\mathbb{Z}_q))$ by $\lambda_q([b]_q)=\Psi_q(T_b)$ for  each $[b]_q \in \mathcal{P}_{g, q}$. By Lemma \ref{symp_property}(ii), $\lambda_p$ is well-defined and injective. Further, we have $$\lambda_q([a]_q*[b]_q)=\lambda_q([T_b(a)]_q)=\Psi_q(T_{T_b(a)})=\Psi_q(T_bT_aT_b^{-1})=\Psi_q(T_b)\Psi_q(T_a)\Psi_q(T_b)^{-1}$$
and
$$\lambda_q([a]_q)*\lambda_q([b]_q)=\Psi_q(T_a)*\Psi_q(T_b)=\Psi_q(T_b)\Psi_q(T_a)\Psi_q(T_b)^{-1}.$$ 
Thus, $\mathcal{P}_{g,q}\cong\{\Psi_q(T_b) \mid b\in \mathcal{D}_g^{ns}\}\subset \Conj(\text{Sp}(2g,\mathbb{Z}_q))$. Since, Dehn twists along two non-separating simple closed curves are conjugate in $\mathcal{M}_g$,  their images under $\Psi_q$ are also conjugate in $\text{Sp}(2g,\mathbb{Z}_q)$. Further, $\text{Sp}(2g,\mathbb{Z}_q)$ is generated by $\{\Psi_q(T_b) \mid b\in \mathcal{D}_g^{ns}\}$ as $\mathcal{M}_g$ is generated by $\mathcal{D}_g^{ns}$. Thus, we get $\mathcal{P}_{g,q}\cong\mathcal{D}(\Psi_q(T_a)^{\text{Sp}(2g,\mathbb{Z}_q)})$ for a non-separating simple closed curve $a$, which proves assertion (ii).
\end{proof}
\medskip

\begin{ack}
Neeraj K. Dhanwani thanks IISER Mohali for the institute post doctoral fellowship. Hitesh Raundal is supported by research associateship under the SERB grant SB/ SJF/2019-20. Mahender Singh is supported by the Swarna Jayanti Fellowship grants DST/SJF/MSA-02/2018-19 and SB/SJF/2019-20.
\end{ack}

\end{document}